\titleformat{\section}{\center\ff{12}\bfseries}{\thesection.}{.7ex}{}
\titlespacing{\section}{0pc}{4ex plus .5ex minus .5ex}{2ex minus .1ex}
\newcommand{\seq}[3]{(#1_#2)_{#2\in#3}}
\newcommand{\ff}[1]{\fontsize{#1}{#1}\selectfont}
\setlist{noitemsep}
\setlist{nosep}
\newtheorem{theorem}{Theorem}
\newtheorem*{theorem*}{Theorem}
\newtheorem{lemma}[theorem]{Lemma}
\newtheorem{corollary}[theorem]{Corollary}
\theoremstyle{definition}
\newenvironment{example}
  {\pushQED{\qed}\examplex}
  {\popQED\endexamplex}
\newenvironment{remark}
  {\pushQED{\qed}\remx}
  {\popQED\endremx}
\definecolor{link}{rgb}{0,0.2,.5}
\definecolor{cite}{rgb}{0,0.3,.8}
\newcommand{\changed}[1]{{\color[rgb]{0.8,0,0.4}#1}\marginpar{\ff{8}\color[rgb]{0.8,0,0.4}$\nwarrow$ revised}}
\newcommand{\answ}[1]{{\color[rgb]{0,0.2,0.86}#1}}
\newcommand{\done}{\answ{Done.}}
\newcommand{\inv}{^{-1}}
\newcommand{\la}{\lambda}
\newcommand{\al}{\alpha}
\newcommand{\ga}{\gamma}
\newcommand{\cc}{\circ}
\newcommand{\di}[1]{^{(#1)}}
\newcommand{\sq}[1]{\mathbb #1}
\newcommand{\sqNN}{\mathbb Z_+}
\newcommand{\kla}[1]{\left(#1\right)}
\newcommand{\OPR}[1]{{\mathbbm #1}}
\newcommand{\PP}[1]{{\OPR P}(#1)}
\newcommand{\pp}[1]{{\OPR P}(#1)}
\newcommand{\ee}[1]{{\OPR E}#1}
\newcommand{\EE}[1]{{\OPR E}(#1)}
\newcommand{\VAR}[1]{{\rm Var}\kla{#1}}
\newcommand{\GS}{G}
\newcommand{\GG}[1]{\GS_{#1}}
\newcommand{\GEN}[1]{P_{#1}}
\newcommand{\LL}[1]{L_{#1}}
\newcommand{\veps}{\varepsilon}
\newcommand{\lex}[1]{{\itshape #1}}
\newcommand{\proc}[2]{(#1)_{#2}}
\newcommand{\deq}{\sim}
\newcommand{\dto}{\Rightarrow}
\newcommand{\apgf}{a.p.g.f.\xspace}
\newcommand{\pgf}{p.g.f.\xspace}
\newcommand{\noneg}{non-negative\xspace}
\newcommand{\nrv}{\noneg random variable\xspace}
\newcommand{\intval}{\noneg integer-valued\xspace}
\newcommand{\afrac}[2]{#1/#2}
\newcommand{\as}[1]{,\qquad#1}
\newcommand{\given}[1]{|{#1}}
\newcommand{\gv}[2]{#1_#2}
\newcommand{\zo}[1]{{#1}_0}
\newcommand{\II}[1]{I_{#1}}
\newcommand{\bt}[1]{#1^\star}
\newcommand{\ba}[1]{\bt p_{#1}(\al)}
\newcommand{\rad}[1]{R_{#1}}
\newcommand{\mbf}[1]{$\boldsymbol{#1}$}
\newcommand{\DSYM}{{\mathbb M}}
\newcommand{\MS}[1]{\DSYM_{#1}^\star}
\newcommand{\MM}[1]{\DSYM_{#1}}
\newcommand{\MSF}[1]{\DSYM^\star_{#1}(n)}
\newcommand{\MMF}[1]{\DSYM_{#1}(n)}
\newcommand{\WSYM}{{\mathbb B}}
\newcommand{\BB}{\WSYM(n)}
\newcommand{\nors}{\rho}
\newcommand{\nor}[1]{\nors(#1)}
\newcommand{\sig}[1]{\sigma_n(#1)}
\newcommand{\aaa}{\al\inv}
\newcommand{\supp}[1]{{\rm supp}(#1)}
\newcommand{\Cgam}[2]{{\rm Gamma}(#1,#2)}
\newcommand{\Cunif}[2]{{\rm Uniform}[#1,#2]}
\newcommand{\Cexp}[1]{{\rm Exp}(#1)}
\newcommand{\Cpoi}[1]{\mathrm{Poisson}(#1)}
\newcommand{\Cber}[1]{{\rm Bernoulli}(#1)}
\newcommand{\Cbinom}[2]{{\rm Binomial}(#1,#2)}
\newcommand{\Cerl}[2]{{\rm Erlang}(#1,#2)}
\newcommand{\Csibuya}[1]{{\rm Sibuya}(#1)}
\begin{document}
\setlength{\abovedisplayskip}{6pt}
\setlength{\belowdisplayskip}{7pt}
\setlist[enumerate]{itemsep=1pt,topsep=2pt,leftmargin=3em,labelwidth=2em,align=left}
\setlist[enumerate,1]{label=\rm(\roman*)}

\newpage

\setcounter{page}{1}
\title{On Binomial Thinning and Mixing}

\author{Offer Kella\thanks{Department of Statistics and Data Science, the Hebrew University of Jerusalem, Jerusalem 9190501, Israel ({\tt  offer.kella@huji.ac.il}).}
\thanks{Supported in part by grant No. 1647/17 from the Israel Science Foundation and the Vigevani Chair in Statistics.}
\ \ and\ \  Andreas L\"opker\thanks{Department of Computer Science and Mathematics, HTW Dresden, University of Applied Sciences, Friedrich-List-Platz 1, D-01069 Dresden, Germany ({\tt lopker@htw-dresden.de}).}
}
	
\maketitle

\begin{abstract} In this paper we consider the notions of binomial thinning, binomial mixing, their generalizations, certain interplay between them, associated limit theorems and provide various examples.
\vspace{2mm}
 
 \noindent {\it Keywords:} Binomial thinning, binomial mixture, alternate probability generating function.
 
 \vspace{2mm}
 
 \noindent 
{\it AMS Subject Classification (MSC2020):} Primary 60E05, Secondary: 60E10.

\end{abstract}

\section{Introduction}

Given a \intval random variable $Z$ and a probability $\al$, a (binomially) $\alpha$-thinned random variable denoted $\alpha\cc Z$ is a random variable that has, given $Z=n$, a binomial distribution with parameters $n$ and $\alpha$. By definition, a $1$-thinned random variable has the same distribution as the original random variable and a $0$-thinned random variable is distributed like the constant zero.

While, for every $\al\in[0,1]$ and any arbitrary  \intval $Z$, the $\alpha$-thinning operation is well defined, there are \intval random variables that cannot be written as an $\alpha$-thinned `version' of some \intval $Z$ for any $\al<1$. It turns out that for any $X$ there is a minimal $\alpha\in[0,1]$ (possibly one or zero) allowing such a representation. This observation gives rise to a nested hierarchy of subsets of all distributions concentrated on the non-negative integers, parametrized by $\alpha$. We investigate conditions for the membership to these classes, specialize results to distributions with finite support and provide examples.

In addition we also focus on the dual concept of binomial mixtures denoted $W\cc n$, where initially the support of the distribution of $W$ is contained in $[0,1]$.  $W\cc n$ is the notation for a random variable of which conditional distribution given $W$ is binomial with $n$ and $W$ (to be made more precise later). One question that we explore in this context is whether one can perform the same type of binomial mixing operation when the support is not contained in $[0,1]$. We provide a nontrivial example where $W$ has the Gamma distribution and thus has an unbounded support. Another question that arises is regarding the relation between the minimal $\alpha$ mentioned in the previous paragraph with respect to the $\alpha$-thinning operation associated with $W\circ n$, when well defined, and the minimal $\alpha$ such that the distribution of $(W/\alpha)\circ n$ is well defined. It turns out that these two values are equal. 

For both thinning and mixing concepts we provide some limiting results where the limits turn out to have Poisson and mixed Poisson distributions, respectively.

Binomial thinning has become a useful tool when studying count data time series (\cite{al3}, \cite{chw1}, \cite{davis}, or \cite{kang} for a generalisation). Take for example the simple auto-regressive INAR(1) process $\seq{X}{n}{\sq N}$ where
\begin{align*}
X_n=\al\cc X_{n-1}+U_n
\end{align*}
with i.i.d. discrete $\seq{U}{n}{\sq N}$ (serving as noise to the model). 
The binomial thinning operation is also performed in a Galton-Watson branching process with Bernoulli offspring, where each individual survives with probability $\alpha$ and has no descendants. In this context the INAR(1) process relates to the population size if immigration is present. 

One can also think of applications in queueing, e.g. in modified M/G/1 systems with impatient customers (\cite{Coh}, II.4., p.234, for the M/G/1 queue). If $A_n$ denotes the number of customers waiting immediately after the beginning of the $n$th service in an M/G/1 queue, and if customers independently leave the queue with probability $\al$ during that service, then
\begin{align*}
A_{n+1}=(\al\cc A_n+C_n-1)^+
\end{align*}
where $(x)^+=\max\{0,x\}$ and $C_n$ denotes the newly arrived customers (see \cite{box} for such processes).

A somewhat related study to ours has been conducted in \cite{wiuf}, where the emphasis is on subclasses of \intval random variables of the form $\al\cc Z$ with $Z$ \intval. Structural aspects of binomial thinning are also studied in \cite{jk} (the operation is called `dilation' there).

Regarding the mixing operation a prominent mixed distribution is the beta-binomial distribution, finding applications in Bayesian statistics. A reference for this and other distributions appearing in this paper is \cite{kotz}.

The paper is organized as follows. In Section~\ref{Thinning} we discuss the basic $\alpha$-thinning operation and prove various preliminary results. In Section~\ref{MMMS} we give a more in depth study of various concepts introduced in Section~\ref{Thinning}. In Sections~\ref{DBS} and \ref{Examples} we discuss the case where the support of the random variables is bounded and provide various examples. In particular we give a necessary and sufficient characterization as to when an $X$ does not have an $\alpha$-thinned distribution for any $\alpha<1$. Finally, in Section~\ref{BM} we discuss the concept of binomial mixing and provide two nontrivial examples.

\section{Thinning preliminaries}\label{Thinning}

In what follows, for a random variable $X$, {\em support of $X$} abbreviates {\em support of the distribution of $X$} and {\em a.s.} abbreviates {almost surely} (with respect to the probability discussed). Unless stated otherwise, random variables are assumed to be a.s. finite. The notation `$\sim$' abbreviates `distributed as', `$\dto$' denotes convergence in distribution. We write $\Cgam \alpha\beta$, $\Cexp\la$, $\Cerl m\la$, $\Cunif ab$, $\text{Uniform}(i_1,\ldots,i_m)$, $\Cpoi\la$, $\Cber p$, $\Cbinom n p$ for the gamma/exponential/continuous uniform/discrete uniform distributions with the usual notations for the parameters. Finally, $1_A$ denotes the indicator of a set $A$.

Let ${\MM{1}}$ denote the family of all discrete probability distributions with support contained in $\sqNN=\{0,1,2,\ldots\}$. When we write $X\in\MM 1$ we mean that the distribution of $X$ is in $\MM 1$, or equivalently, that $\pp{X\in\mathbb{Z}_+}=1$.

Given $X\in\MM 1$ and $\al\in[0,1]$ the binomial $\alpha$-thinning operation is defined as
\begin{align}
\al\cc X\sim\sum_{i=1}^X B_i,
\end{align}
where an empty sum (when $X=0$) is defined to be zero and $\proc{B_i}{i\in\sqNN}$ are i.i.d. Bernoulli r.v., independent of $X$, with $\pp{B_i=1}=\al$ (see \cite{steutel}). In particular, note that $0\circ X=0$ a.s. and $1\circ X\sim X$. 

It is simple to show and well known that for independent $X,Y\in\MM{1}$, independent $\al\circ X,\al\circ Y$ and $\al,\beta\in(0,1]$ the following relations hold.
\begin{enumerate}
\item $\al\cc(X+Y)\deq \al\cc X+\al\cc Y$.
\item $\al\cc(\beta\cc X) \deq\beta\cc(\al\cc X)\deq (\al\beta)\cc X$.
\item $\EE{\al\cc X}=\al\ee{X}$ (finite or infinite).
\item  $\VAR{\al\cc X}=\al^2\VAR{X}+\al(1-\al)\ee{X}$, whenever $\ee{ X^2}<\infty$.
\end{enumerate}

The set of all $\alpha$-thinned distributions is denoted by $\MM \alpha$ and when we write $X\in\MM \alpha$ we mean that $X\sim\alpha\circ Z$ for some $Z\in\MM 1$.

For $X\in\MM 1$ we denote the probability generating function (\pgf) of $X$ by
\begin{align*}
\GEN X(s)=\begin{cases}\ee{s^X}&s\in[-1,0)\cup(0,1]\\ P(X=0)&s=0\,,\end{cases}
\end{align*}
which is continuous on $[-1,1]$ and infinitely differentiable on $(-1,1)$. Often it is more convenient to work with the function $\GG{X}(s)=\GEN X(1-s)$, the alternate probability generating
function (\apgf) of $X$, which is continuous on $[0,2]$ and is infinitely differentiable on $(0,2)$. We recall that $n!P(X=n)=P_X^{(n)}(0)=(-1)^nG_X^{(n)}(1)$. One reason to use 
$\GG X(s)$ rather than $P_X(s)$ is the useful relation
\begin{align}\label{basic}
\GG{\al\cc X}(s)=\GG{X}(\al s),
\end{align}
for $s\in[0,1)$, which is straightforward to show ({\em e.g.,} \cite{mc}). In fact, this equality holds on $[0,2]$. The reason is that upon changing the order of summation we have that
\begin{equation}
\sum_{i=0}^\infty |1-s|^i\sum_{k=i}^\infty {k\choose i}\alpha^i(1-\alpha)^{k-i}P(X=i)=\ee (1-\alpha +|1-s|\alpha)^X
\end{equation}
Thus, the sum converges abslutely whenever $1-\alpha+|1-s|\alpha\le 1$ which holds if and only if $s\in[0,2]$. Therefore, it is allowed to change the order of summation when $(1-s)^i$ replaces $|1-s|^i$ which gives $G_X(\alpha s)$ on the right hand side.

A real valued function $f$ is called {\em absolutely monotone} (resp., {\em completely monotone}) on an open interval $(a,b)$ ($a$ or $b$ could be infinite) if it is smooth (=infinitely differentiable) and satisfies $f^{(n)}(t)\ge 0$ (resp., $(-1)^nf^{(n)}(t)\ge 0$) for all $t\in(a,b)$.

We recall that a function $f:(0,1]\to \mathbb{R}$ is a p.g.f. of some, nonnegative, integer valued random variable if and only if it is absolutely monotone on $(0,1)$ and satisfies $f(1-)=f(1)=1$ ({\em e.g.,} Th.~3a, p.~146 of \cite{widder}). Therefore, $G:[0,1)\to\mathbb{R}$ is an a.p.g.f. if and only if it is completely monotone on $(0,1)$ and satisfies $G(0+)=G(0)=1$. 

We note that although $P_X(s)$ (as a function on $[-1,1]$) is absolutely monotone on $(0,1)$ (and hence, also at $0$), it is not necessarily so on $(-1,0)$. Similarly, $G_X(s)$ (as a function on $[0,2]$) need not be completely monotone on $(1,2)$. For a trivial example, take $P(X=1)=1$ and observe that $P_X(s)=s$ is negative on $(-1,0)$ and $G_X(s)=1-s$ is negative on $(1,2)$.

When $Y$ is an a.s. nonnegative random variable we denote by $L_Y(s)=\ee{e^{-sY}}$ the Laplace-Stieltjes transform (LST) of $Y$. We recall that a function $h:[0,\infty)\to\mathbb{R}$ is an LST of some a.s. nonnegative random variable if and only if it is completely monotone on $(0,\infty)$, is continuous (from the right) at zero and satisfies $h(0)=1$. 

For ease of reference, $X$ has a {\em mixed Poisson} distribution if for some nonnegative random variable $Y$, the conditional distribution of $X$ given $Y$ is $\Cpoi Y$, where $\Cpoi 0$ is the
distribution of the constant zero ({\em e.g.,} \cite{gran}). Equivalently, when $X\sim N(Y)$, where $\{N(t)|\,t\ge 0\}$ is a Poisson process with rate $1$ and $Y$ is an independent nonnegative random variable. We note that when $X$ has a mixed Poisson distribution, then $P(X=k)=\ee e^{-Y}Y^k/k!$, $\ee X=\ee Y$ and $
\VAR X=\VAR Y+\ee{Y}$ whenever $\ee{Y^2}<\infty$. It is easy to verify that, since $G_X(s)$ converges absolutely on $[0,2]$, then $G_X(s)=L_Y(s)$ for $s\in[0,2]$. 
Also, if $G_X(s)=L_Y(s)$ on some $S\subset (0,2)$ containing an accumulation point, then since $G_X(\cdot)$ and $L_Y(\cdot)$ are analytic on $(0,2)$ and continuous on $[0,2]$, it follows that $G_X(s)=L_Y(s)$ on $[0,2]$, thus necessarily $X\sim N(Y)$.

\begin{remark}
We note that, although $L_Y(s)$ is finite on $[0,\infty)$ and is equal to $G_{N(Y)}(s)$ on $[0,2]$, it is possible that $G_{N(Y)}(s)$ is infinite or undefined for $s\not\in[0,2]$. One such example is when $Y$ has the density $y^{-2}1_{[1,\infty)}(y)$. It is easy to verify that in this case, letting $S_{n-1}\sim\Cgam{n-1}1$ and recalling $N(1)\sim\Cpoi 1$, then for $n\ge 2$ we have
\begin{equation}
n(n-1)\ee e^{-Y}\frac{Y^n}{n!}=\int_1^{\infty}e^{-y}\frac{y^{n-2}}{(n-2)!}\text{d}y=\pp{S_{n-1}>1}=\pp{N(1)\le n-2}\,.
\end{equation}
Thus, when $s\not\in[0,2]$ we have that $|1-s|>1$, which implies that
\begin{equation}
\lim_{n\to\infty}|1-s|^n\pp{N(Y)=n}=\lim_{n\to\infty}\frac{|1-s|^n}{n(n-1)}\pp{N(1)\le n-2}=\infty
\end{equation} 
and thus 
$\sum_{i=0}^\infty(1-s)^i\pp{N(Y)=i}$ is either infinite (for $s<0$) or undefined (for $s>2$).
\end{remark}

An immediate consequence of the discussion above is the following.

\begin{theorem}\label{ThCM0}  Let $X\in \MM 1$ and $\alpha\in (0,1)$.
\begin{enumerate}
\item There exists $f:[0,1/\alpha)\to \mathbb{R}$ completely monotone on $(0,1/\alpha)$ with $f(0+)=f(0)$  for which $f(s)=G_X(s)$ on $S\subset (0,\min(\alpha^{-1},2))$ containing an accumulation point if and only if $X\in\MM\alpha$. In this case there exists $Z\in\MM 1$ such that $G_X(s)=G_Z(\alpha s)$ on $[0,2]$.
\item There exists $f:[0,\infty)\to\mathbb{R}$ completely monotone on $(0,\infty)$ with $f(0)=f(0+)$ for which $f(s)=G_X(s)$ on $S\subset (0,2)$ containing an accumulation point if and only if $X$ has a mixed Poisson distribution. Equivalently, there exists some nonnegative $Y$ such that $G_X(s)=L_Y(s)$ on $[0,2]$.
\end{enumerate}
\end{theorem}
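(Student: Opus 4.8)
The plan is to run both parts through one mechanism: use the recalled characterizations of a.p.g.f.s and LSTs via complete monotonicity to manufacture a random variable out of the witness $f$, and then transport the resulting identity from the accumulation-point set $S$ to all of $[0,2]$ by the analyticity-plus-continuity argument already used in the discussion preceding the theorem.

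For the implications ``$X\in\MM\al$ (resp.\ mixed Poisson) $\Rightarrow f$ exists'' I would simply exhibit $f$. If $X\sim\al\cc Z$ with $Z\in\MM 1$, then $G_X(s)=G_Z(\al s)$ on $[0,2]$ by \eqref{basic}, and $f(s):=G_Z(\al s)$ is defined on $[0,1/\al)$; differentiating gives $(-1)^nf^{(n)}(s)=\al^n(-1)^nG_Z^{(n)}(\al s)\ge 0$, so $f$ is completely monotone on $(0,1/\al)$, while $f(0+)=G_Z(0+)=1=f(0)$. This $f$ is the required witness and, incidentally, records the ``in this case'' clause. For part (ii), if $X\sim N(Y)$ then $G_X=L_Y$ on $[0,2]$, and $f:=L_Y$ is completely monotone on $(0,\infty)$ with $f(0)=f(0+)=1$.

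For the converse implications I would normalize the witness and then continue only the honest transforms. Given $f$ as in part (i), put $g(u):=f(u/\al)$ on $[0,1)$; the chain rule makes $g$ completely monotone on $(0,1)$, and $g(0+)=f(0+)=f(0)=g(0)=:c$. Complete monotonicity forces $g\ge 0$ and nonincreasing, so $c=\sup g$; were $c=0$ we would get $g\equiv 0$, hence $f\equiv 0$ on $S$, hence $G_X\equiv 0$ on $(0,2)$ by analyticity, contradicting $G_X(0)=1$. Thus $c>0$, $g/c$ is completely monotone on $(0,1)$ with $(g/c)(0+)=(g/c)(0)=1$, so $g/c=G_Z$ for some $Z\in\MM 1$. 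On $S$ we then have $G_X(s)=f(s)=g(\al s)=c\,G_Z(\al s)=c\,G_{\al\cc Z}(s)$; since $G_X$ and $G_{\al\cc Z}$ are analytic on $(0,2)$ and continuous on $[0,2]$, this identity extends to $[0,2]$, and evaluating at $s=0$ gives $c=1$. Hence $G_X=G_{\al\cc Z}$ on $[0,2]$, i.e.\ $X\sim\al\cc Z\in\MM\al$. Part (ii) is identical in outline: from $f$ completely monotone on $(0,\infty)$ with $f(0)=f(0+)=c$ the same dichotomy gives $c>0$; the recalled LST characterization makes $f/c$ an LST $L_Y$; and transporting $G_X=c\,L_Y=c\,G_{N(Y)}$ from $S$ to $[0,2]$ and setting $s=0$ yields $c=1$, so $G_X=L_Y$ on $[0,2]$ and $X\sim N(Y)$.

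The single delicate point is the passage from equality on the sparse set $S$ to an identity one may evaluate at the endpoint $0$, and the normalization trick is designed precisely to keep this step inside the class of genuine transforms. I never analytically continue $f$ itself, so I avoid invoking the (true but extraneous) fact that a function merely completely monotone on the bounded interval $(0,1/\al)$ is real-analytic; I only continue $G_X$, $G_{\al\cc Z}$, $L_Y$ and $G_{N(Y)}$, which are analytic on $(0,2)$ by construction. What remains is bookkeeping: that $S$ has an accumulation point interior to $(0,2)$ so the identity theorem applies, that $\min(\al^{-1},2)>1$ so $S$ sits in the common domain, and that $s\mapsto\al s$ carries $[0,1/\al)$ onto $[0,1)$, which is what lets $g$ be read as an a.p.g.f.
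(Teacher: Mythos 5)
Your proof is correct, and its overall architecture is the one the paper uses: rescale the witness $f$ by $s\mapsto s/\alpha$ so that the complete-monotonicity characterization of \apgfs (resp.\ of LSTs) produces a $Z\in\MM 1$ (resp.\ a nonnegative $Y$), and then propagate the identity from $S$ to all of $[0,2]$ via analyticity plus continuity. The one genuine point of divergence is how the normalization $f(0)=1$ is secured. The paper's proof asserts that $f$ itself is analytic on $(0,\alpha^{-1}\wedge 2)$ — which rests on the nontrivial standard fact that a completely monotone function on an open interval is real-analytic — concludes $G_X=f$ on the whole closed interval at once, and reads off $f(0)=G_X(0)=1$ before building $Z$. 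You deliberately avoid that fact: you carry the constant $c=f(0)$, build $Z$ from $g/c$, and apply the identity theorem only to $G_X$ and $G_{\alpha\circ Z}$ (resp.\ $G_{N(Y)}$), whose analyticity on $(0,2)$ is elementary since they are power series in $1-s$ about $s=1$ with radius of convergence at least $1$; evaluating at $s=0$ then forces $c=1$. The paper's route is shorter if one grants the real-analyticity of completely monotone functions; yours is more self-contained and, as you note, confines the analytic continuation to honest transforms. Both arguments share the same unaddressed fine point, namely that the accumulation point of $S$ should lie in the open interval where the identity theorem is invoked, so you are no worse off than the paper there.
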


\begin{proof}
In (i) we note that since $G_X(\cdot)$ and $f(\cdot)$ are analytic on $(0,\alpha^{-1}\wedge 2)$ and continuous on $[0,\alpha^{-1}\wedge 2]$ then $G_X(s)=f(s)$ on $[0,\alpha^{-1}\wedge 2]$.  Also, $f_\alpha(t)=f(t\alpha^{-1})$ is completely monotone on $(0,1)$, continuous at $0$, with $f_\alpha(0)=1$ and thus there exists $Z\in\MM 1$ for which $G_Z(s)=f(s\alpha^{-1})$ for $s\in[0,1)$. Therefore $G_Z(\alpha s)=f(s)$ for $s\in[0,\alpha^{-1})$ and in particular for $s\in[0,1)$ we have $G_Z(\alpha s)=f(s)=G_X(s)$ which is equivalent to $X\sim\alpha\circ Z$, hence $X\in\MM\alpha$. The converse is obvious. The fact that $G_X(s)=G_Z(\alpha s)$ for $s\in[0,2]$ was discussed above. (ii) is also immediate from the discussion above.
\end{proof}

\begin{corollary}\label{CorExt}The following are equivalent.
\begin{enumerate}
\item $X\in \MM \alpha$ for some $\alpha\in(0,1)$.
\item $G_X(s)$ is completely monotone on $(1,c)$ for some $c\in (1,2]$.
\item $P_X(s)$ is absolutely monotone on $(-\epsilon,0)$ for some $\epsilon\in(0,1)$.
\end{enumerate}
\end{corollary}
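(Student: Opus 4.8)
The plan is to run the equivalences as $\text{(i)}\Leftrightarrow\text{(ii)}$, carrying all the analytic content, together with a purely formal $\text{(ii)}\Leftrightarrow\text{(iii)}$ obtained by the substitution $u=1-s$. Two facts available from the preceding discussion do all the work: $G_X$ is smooth on $(0,2)$, so every derivative $G_X^{(n)}$ is continuous there; and $G_X$, being the \apgf of $X\in\MM 1$, is completely monotone on $(0,1)$. The link to membership in $\MM\alpha$ is supplied by Theorem~\ref{ThCM0}(i).

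For $\text{(i)}\Rightarrow\text{(ii)}$, assume $X\in\MM\alpha$ with $\alpha\in(0,1)$. Theorem~\ref{ThCM0}(i) produces a function $f$ completely monotone on $(0,\alpha\inv)$ that coincides with $G_X$ on $[0,\alpha\inv\wedge 2]$. Since $\alpha<1$ forces $\alpha\inv>1$, the choice $c=\alpha\inv\wedge 2\in(1,2]$ makes $G_X=f$ completely monotone on $(1,c)$, giving (ii). Conversely, for $\text{(ii)}\Rightarrow\text{(i)}$ I would first promote complete monotonicity from $(1,c)$ to $(0,c)$: the inequalities $(-1)^nG_X^{(n)}(s)\ge 0$ hold on $(0,1)$ and on $(1,c)$ for all $n$, and letting $s\to 1$ along the continuous $G_X^{(n)}$ yields them at $s=1$ as well, so $G_X$ is completely monotone on $(0,c)$. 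Setting $\alpha=1/c\in[1/2,1)$, the function $G_X$ is completely monotone on $(0,\alpha\inv)=(0,c)$ with $G_X(0+)=G_X(0)=1$; Theorem~\ref{ThCM0}(i) then applies with $f=G_X$ and returns $X\in\MM\alpha$.

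The equivalence $\text{(ii)}\Leftrightarrow\text{(iii)}$ is the change of variable $u=1-s$ in $G_X(s)=P_X(1-s)$. Differentiating gives $G_X^{(n)}(s)=(-1)^nP_X^{(n)}(1-s)$, so $(-1)^nG_X^{(n)}(s)=P_X^{(n)}(1-s)$. Hence complete monotonicity of $G_X$ on $(1,c)$ is identical to absolute monotonicity of $P_X$ on $(1-c,0)=(-\epsilon,0)$ with $\epsilon=c-1$, and since monotonicity on $(1,c)$ entails it on any smaller $(1,c')$ with $1<c'<\min(c,2)$ one can always arrange $\epsilon=c'-1\in(0,1)$ to match the range in (iii).

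I expect the gluing at $s=1$ in $\text{(ii)}\Rightarrow\text{(i)}$ to be the only genuine obstacle. The sign conditions are hypothesized only on the two open intervals on either side of $1$, and one must check they are not lost at the junction; the resolution is exactly the global smoothness of $G_X$ on $(0,2)$, which turns the one-sided limits of the continuous derivatives into the needed inequalities at $s=1$. Without this regularity, the two monotonicity regions could not be spliced into the single interval $(0,\alpha\inv)$ on which Theorem~\ref{ThCM0}(i) can be invoked.
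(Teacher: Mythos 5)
Your proof is correct and follows essentially the same route as the paper's: both directions of (i)$\Leftrightarrow$(ii) go through Theorem~\ref{ThCM0}(i) with $c=\min(2,\alpha^{-1})$ resp.\ $\alpha=1/c$, and (ii)$\Leftrightarrow$(iii) is the substitution $s\mapsto 1-s$, which the paper dismisses as obvious. The only difference is that you spell out the splicing of the two monotonicity intervals at $s=1$ via smoothness of $G_X$ on $(0,2)$ — exactly the justification behind the paper's parenthetical ``(hence, on $(0,c)$)'' — and you note the minor adjustment needed to get $\epsilon<1$ when $c=2$.
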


We note that replacing $(0,1)$ in~(i) by $[0,1)$ is of no consequence since if~(i) holds for $\alpha=0$ then it also holds for any $\alpha\in (0,1)$ and if $\alpha\in(0,1)$ then also $\alpha\in[0,1)$.

\begin{proof}
The equivalence of~(ii) and~(iii) is obvious, thus it remains to show the equivalence of~(i) and~(ii).
If $X\in\MM \alpha$ then we take $Z\in\MM 1$ such that $X\sim\alpha\circ Z$. Then $G_X(s)=G_Z(\alpha s)$ for $s\in[0,2]$. Since $G_Z(\alpha s)$ is completely monotone on $(0,1/\alpha)$ then $G_X(s)$ must be completely monotone on $(0,c)$ (hence on $(1,c)$) where $c=\min(2,1/\alpha)\in(1,2]$. For the converse, if $G_X(s)$ is completely monotone on $(1,c)$ (hence, on $(0,c)$) then we take $\alpha=1/c$ and apply part~(i) of Theorem~\ref{ThCM0}. 
\end{proof}

\begin{corollary}\label{v3} $X\sim N(Y)$ for some nonnegative $Y$ if and only if
there exists $f:[0,\infty)\to\mathbb{R}$ completely monotone on $(0,\infty)$ with $f(0)=f(0+)$ which agrees with $\GEN X(-s)$ on some $S\subset [0,1)$ containing an accumulation point. 

In this case there exists a nonnegative random variable $\tilde Y$ and $p_0\in(0,1]$ such that $\GEN X(-s)=p_0L_{\tilde Y}(s)$ for $s\in (0,1]$. If $p_0=1$ then $Y$ is a.s. zero and otherwise $Y$ satisfies  $\pp{Y\in dy}=p_0e^{y}\pp{\tilde Y\in dy}$.
\end{corollary}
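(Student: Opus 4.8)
The plan rests on the elementary bridge identity $P_X(-s)=G_X(1+s)$, valid for $s\in[-1,1]$ by the definition of the \apgf. It converts statements about $P_X(-\cdot)$ on $[0,1]$ into statements about $G_X$ on $[1,2]$, so that the mixed-Poisson criterion $G_X=L_Y$ on $[0,2]$ from Theorem~\ref{ThCM0}(ii) and the remarks preceding it become available.

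For the ``only if'' direction I would start from $X\sim N(Y)$, so that $G_X(s)=L_Y(s)$ on $[0,2]$. Set $p_0=\pp{X=0}=P_X(0)=G_X(1)=\ee{e^{-Y}}$, which lies in $(0,1]$ since $e^{-Y}\in(0,1]$ a.s. The exponential tilt $\pp{\tilde Y\in dy}=p_0^{-1}e^{-y}\pp{Y\in dy}$ is then a probability law, and substituting $1+s$ into $G_X=L_Y$ gives $P_X(-s)=G_X(1+s)=\ee{e^{-(1+s)Y}}=p_0L_{\tilde Y}(s)$ for $s\in[-1,1]$, in particular on $(0,1]$. Thus $f(s):=p_0L_{\tilde Y}(s)$, defined on $[0,\infty)$, is a positive multiple of an LST, hence completely monotone on $(0,\infty)$ with $f(0)=f(0+)=p_0$, and it agrees with $P_X(-s)$ on $(0,1]$. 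This single computation yields the required $f$, the claimed transform relation, and, upon inverting the tilt, the identity $\pp{Y\in dy}=p_0e^{y}\pp{\tilde Y\in dy}$.

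For the ``if'' direction, suppose such an $f$ exists. A completely monotone function on $(0,\infty)$ is nonnegative and nonincreasing there, so if $f(0+)=0$ then $f\equiv0$, which would force $P_X(-\cdot)\equiv0$ on $S$ and contradict $P_X(1)=1$; hence $p_0:=f(0)=f(0+)>0$. Bernstein's theorem (the LST characterisation recalled above) produces a nonnegative $\tilde Y$ with $f=p_0L_{\tilde Y}$ on $[0,\infty)$. Since $f$ and $P_X(-\cdot)$ are analytic on $(0,1)$ and agree on $S$, which contains an accumulation point, the identity theorem (exactly as in the proof of Theorem~\ref{ThCM0}) gives $f=P_X(-\cdot)$ on $[0,1]$; in particular $L_{\tilde Y}(s)=P_X(-s)/p_0$ on $[0,1]$ and $p_0=f(0)=P_X(0)=\pp{X=0}$.

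It remains to untilt, and this is the step I expect to be the crux. Define the positive measure $\mu(dy)=p_0e^{y}\pp{\tilde Y\in dy}$, so that $\int e^{-uy}\,\mu(dy)=p_0L_{\tilde Y}(u-1)$ wherever the right-hand side is finite; a priori $\mu$ may have infinite mass, so I must control the exponential moments of $\tilde Y$. Here I would invoke the classical fact that the abscissa of convergence of the Laplace transform of a positive measure is a real singularity of that transform (see \cite{widder}): because $P_X(z)=\sum_n\pp{X=n}z^n$ is analytic on the unit disk, $P_X(-s)/p_0$ furnishes an analytic continuation of $L_{\tilde Y}$ across $s=0$ into $|s|<1$, forcing the abscissa of $L_{\tilde Y}$ to be at most $-1$. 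Consequently $\int e^{-uy}\,\mu(dy)$ is finite for every $u>0$ and equals $p_0L_{\tilde Y}(u-1)=P_X(1-u)=G_X(u)$ for $u\in(0,2)$. Letting $u\downarrow0$ and applying monotone convergence gives $\mu([0,\infty))=G_X(0+)=G_X(0)=1$, so $\mu$ is a probability law; taking it to be the law of $Y$, we obtain $G_X=L_Y$ on $(0,2)$, hence on $[0,2]$ by continuity, which means $X\sim N(Y)$, and by construction $\pp{Y\in dy}=p_0e^{y}\pp{\tilde Y\in dy}$. Finally, $p_0=1$ forces $\ee{e^{\tilde Y}}=\mu([0,\infty))/p_0=1$ with $\tilde Y\ge0$, whence $\tilde Y=0$ and therefore $Y=0$ a.s.
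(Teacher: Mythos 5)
Your proof is correct, and your forward direction together with the tilting identity is essentially the paper's. The converse, however, takes a genuinely different route. The paper never constructs $Y$ by hand: it glues the shifted function onto the \apgf, forming $g(s)=G_X(s)1_{[0,1)}(s)+f(s-1)1_{[1,\infty)}(s)$, notes that $g$ is completely monotone on $(0,1)\cup(1,\infty)$ and smooth at $s=1$ (since $f(s-1)=G_X(s)$ on $(1,2)$ by the identity theorem), hence completely monotone on all of $(0,\infty)$, and then simply invokes part~(ii) of Theorem~\ref{ThCM0}, which already packages the Bernstein-type step and delivers the mixed Poisson representation. You instead build $Y$ explicitly by exponentially untilting $\tilde Y$, and the price is having to prove $\ee{e^{\tilde Y}}<\infty$; for that you appeal to the Landau--Pringsheim theorem that the real abscissa of convergence of the Laplace transform of a positive measure is a singularity, pushing the abscissa of $L_{\tilde Y}$ to at most $-1$ via the analytic continuation through the unit disk furnished by $P_X$. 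That argument is valid and has the merit of exhibiting $Y$ and the identity $G_X=L_Y$ on $(0,2)$ directly, but it leans on a deeper classical theorem than the paper needs; you should also make explicit the intermediate continuation $p_0L_{\tilde Y}(s)=P_X(-s)$ on all of $(-1,1)$ (not just on $[0,1]$) before evaluating at $s=u-1$ for $u\in(0,1)$. The paper's gluing argument avoids all of this at the cost of obtaining $Y$ only abstractly, the untilting formula then being read off from $L_Y(1+s)=p_0L_{\tilde Y}(s)$ exactly as in your forward direction.
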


We will demonstrate the use of Corollary~\ref{v3} in Example~\ref{Sibuya-1}.

\begin{proof}
If $X\sim N(Y)$ then $P_X(-s)=G_X(1+s)=L_Y(1+s)$ on $S=[0,1)$ where clearly $f(s)=L_Y(1+s)$ is completely monotone on $(0,\infty)$ with $f(0)=f(0+)$.

For the converse, if $P_X(-s)=f(s)$ on $S$, then $G_X(s)=f(s-1)$ on $S+1=\{s+1|\,s\in S\}\subset [1,2)\subset (0,2)$. Thus, if we define $g(s)=G_X(s)1_{[0,1)}(s)+f(s-1)1_{[1,\infty)}(s)$ then $g$ is completely monotone on $(0,1)\cup(1,\infty)$ and since $g$ is smooth on $(0,2)$ and in particular at $s=1$ it follows that $g$ is completely monotone on $(0,\infty)$ so that by~(ii) of Theorem~\ref{ThCM0} it follows that $X\sim N(Y)$ for some  nonnegative $Y$.

Now, clearly $p_0=P_X(0)=G_X(1)=L_Y(1)>0$. Hence $\pp{\tilde Y\in {\rm d}y}=p_0^{-1}e^{-y}\pp{Y\in {\rm d}y}$ defines a probability distribution satisfying $p_0L_{\tilde Y}(s)=L_Y(1+s)$. This implies that $P_X(-s)=p_0L_{\tilde Y}(s)$ and that $\pp{Y\in dy}=p_0e^y\pp{\tilde Y\in dy}$. Finally $p_0=1$ if and only if $L_Y(1)=1$ which is equivalent to $\pp{Y=0}=1$.
\end{proof}

\section{The sets \mbf{\MM \al} and \mbf{\MS \al}}\label{MMMS}

We recall that, for a given $\al\in[0,1]$, $\MM \alpha$ denotes the set of all $\alpha$-thinned distributions of $\MM 1$. Clearly, for each $\alpha\in[0,1]$, $\MM\alpha$ is nonempty since $\MM 1$ is nonempty.

For $\al\in[0,1)$ define $\MM {\al+}=\bigcap_{\ga\in(\al,1]}\MM{\ga}$. Note that $\MM 0$ contains only the distribution
of the constant zero.
Recall (Theorem~\ref{ThCM0}) that
$ X\in\MM{\al}$ if and only if $\GG{X}( s)=\GG{Z}(\alpha s)$ for some $Z\in\MM 1$ and all $s\in[0,2]$. Finally, denote $\MS 0=\MM{0+}$ and for $\al\in(0,1]$ let
\begin{align}\label{Malpha}
\MS\al\equiv\MM{\al}\setminus\bigcup_{\ga\in[0,\al)}\MM{\ga}\ .
\end{align}
By definition, $\{\MS\alpha|\,\alpha\in[0,1]\}$ are pairwise disjoint with $\bigcup_{\alpha\in[0,1]}\MS \alpha=\MM 1$.

\begin{theorem}\label{udi} For every $\al\in(0,1]$, $M_{\alpha+}=M_\alpha$. Moreover, $\MM {0+}\not=M_0$ and for $0<\al<\beta\le1$,
 \begin{equation}
 \MM 0\subset \MM{0+}\subset \MM\al\subset \MM{\beta}\,.
 \end{equation}
\end{theorem}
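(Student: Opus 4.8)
The plan is to reduce every assertion to the completely monotone characterisation of $\MM\al$ provided by Theorem~\ref{ThCM0}(i): a variable $X\in\MM 1$ belongs to $\MM\al$ exactly when $\GG X$ is the restriction near $0$ of a function that is completely monotone on $(0,\al\inv)$ and continuous with matching value at $0$. Throughout I use that completely monotone functions are real analytic on their open interval of definition, a fact already invoked in the excerpt for $\GG X$ and $L_Y$.

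I first dispatch the chain of inclusions. The degenerate distribution at $0$ is unchanged by thinning, so it lies in every $\MM\ga$ and hence in $\MM{0+}$, giving $\MM 0\subset\MM{0+}$; and $\MM{0+}=\bigcap_{\ga\in(0,1]}\MM\ga\subset\MM\al$ is immediate from the definition of the intersection once $\al\in(0,1]$. For $0<\al<\beta\le 1$, suppose $X\deq\al\cc Z$ with $Z\in\MM 1$. Writing $\al=\beta\cdot(\al/\beta)$ with $\al/\beta\in(0,1)$ and using the composition rule $\al\cc(\beta\cc Z)\deq(\al\beta)\cc Z$, we get $X\deq\beta\cc\bkla{(\al/\beta)\cc Z}$, so $X\in\MM\beta$ and thus $\MM\al\subset\MM\beta$. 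Applying this for every $\ga\in(\al,1]$ yields $\MM\al\subset\bigcap_{\ga\in(\al,1]}\MM\ga=\MM{\al+}$, which is the easy half of the claimed equality.

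The heart of the matter is the reverse inclusion $\MM{\al+}\subset\MM\al$ for $\al\in(0,1)$. Fix $X\in\MM{\al+}$. For each $\ga\in(\al,1]$ we have $X\in\MM\ga$, so by Theorem~\ref{ThCM0}(i) there is $Z_\ga\in\MM 1$ with $\GG X(s)=\GG{Z_\ga}(\ga s)$ on $[0,2]$, and $g_\ga(s):=\GG{Z_\ga}(\ga s)$ is completely monotone on $(0,\ga\inv)$. Each $g_\ga$ coincides with $\GG X$ on a neighbourhood of $0$; since the $g_\ga$ are real analytic, any two of them agree on a subinterval and hence on the whole connected intersection of their domains. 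Consequently $f(s):=g_\ga(s)$ for $s\in(0,\ga\inv)$ is well defined on $\bigcup_{\ga\in(\al,1]}(0,\ga\inv)=(0,\al\inv)$, is completely monotone there (each point lies in some $(0,\ga\inv)$ where $f=g_\ga$), agrees with $\GG X$ near $0$, and extends continuously to $0$ with $f(0+)=f(0)$. Theorem~\ref{ThCM0}(i) then gives $X\in\MM\al$, so $\MM{\al+}=\MM\al$; the value $\al=1$ is trivial under the convention $\MM{1+}=\MM 1$. This patching step is the one I expect to be delicate: one must check both that the $g_\ga$ fuse into a single completely monotone function (handled by the real-analytic identity theorem) and that their domains exhaust exactly $(0,\al\inv)$ rather than some smaller interval — otherwise one would only recover membership in $\MM{\al'}$ for some $\al'>\al$ — which is secured by $\sup_{\ga\in(\al,1]}\ga\inv=\al\inv$ being unattained.

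Finally, to see $\MM{0+}\not=\MM 0$ it suffices to produce a non-degenerate member of $\MM{0+}=\bigcap_{\ga\in(0,1]}\MM\ga$, since $\MM 0$ contains only the distribution of the constant $0$. Take $X\sim\Cpoi\la$ with $\la>0$. Thinning a Poisson variable again yields a Poisson variable, so with $Z\sim\Cpoi{\la/\ga}\in\MM 1$ we have $\ga\cc Z\sim\Cpoi\la$ for every $\ga\in(0,1]$; hence $X\in\MM\ga$ for all such $\ga$, whence $X\in\MM{0+}$, while $X$ is plainly not the constant $0$. This exhibits the strictness of $\MM 0\subset\MM{0+}$ and completes the proof.
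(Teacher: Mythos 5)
Your proof is correct, but the route through the key step $\MM{\al+}\subset\MM\al$ is genuinely different from the paper's. The paper argues probabilistically: writing $X\deq\beta\cc Z_\beta$ for each $\beta>\al$, it observes that $Z_{\beta_2}\deq(\beta_1/\beta_2)\cc Z_{\beta_1}$ for $\al<\beta_1<\beta_2$, so the family $\proc{Z_\beta}{\beta}$ is stochastically monotone; hence $Z_\beta$ converges in distribution as $\beta\downarrow\al$ to a possibly defective limit $Z_\al$, and the identity $G_{Z_\al}(s)=G_X(s/\al)$ on $[0,\al]$ together with $G_{Z_\al}(0)=G_X(0)=1$ rules out mass at infinity, yielding $X\deq\al\cc Z_\al$. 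You instead work entirely on the transform side: each $\ga>\al$ supplies a completely monotone extension $g_\ga$ of $\GG X$ to $(0,\ga\inv)$, the identity theorem for analytic functions fuses these into one completely monotone function on $\bigcup_\ga(0,\ga\inv)=(0,\al\inv)$, and Theorem~\ref{ThCM0}(i) closes the argument. Both proofs are sound; yours buys a cleaner deduction that exploits the machinery of Theorem~\ref{ThCM0} directly and avoids the tightness/properness check (the burden is shifted to the patching step, which you correctly secure via analyticity and the observation that $\sup_\ga\ga\inv=\al\inv$ is not attained), while the paper's construction is more self-contained and explicitly exhibits the limiting variable $Z_\al$ as a distributional limit of the $Z_\beta$. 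The remaining parts --- the inclusion chain via the composition rule and the Poisson witness for $\MM 0\not=\MM{0+}$ --- coincide with the paper's treatment, which uses the same example in the guise of ``$\MM{0+}$ contains any Poisson distribution.'' One cosmetic point: $\MM{1+}$ is only defined in the paper for $\al\in[0,1)$, so your explicit convention for $\al=1$ is a reasonable (and necessary) reading of the statement.
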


\begin{proof} $\MM {0+}$ contains any Poisson (as well as mixed Poisson) distribution, therefore it is non-empty and is different from $\MM 0$ which contains only the distribution of the constant $0$, which is also in $\MM{0+}$ and thus $\MM 0\subset \MM {0+}$. Let $X\sim \al\circ 1$ (a Bernoulli distribution with
$\PP{X=1}=\al$). Assume that for some $\beta\in (0,1]$ and some $Z\in\MM 1$ we have that $X\sim \beta\circ Z$. Then clearly also $Z$ has a Bernoulli distribution (possibly with $P(Z=1)=1$) and $\al=P(X=1)=\beta
P(Z=1)\le \beta$. Therefore, $X\not\in \MM \beta$ for any $\beta<\al$ and thus $X\in\MS \al$. Hence, $\MS\al$ is non-empty. We also see that if $X\sim\al\circ Z$, then for $\beta>\al$ we have that
$X\sim \beta\circ(\al/\beta)\circ Z$ and thus $\MM\al\subset \MM\beta$. The fact that they are contained in $\MM 1$ and contain $\MM 0$ is obvious. Finally, assume that for every $\beta>\al$, $X\sim \beta\circ
Z_\beta$ for some $Z_\beta\in\MM 1$. Then, since for $\al<\beta_1<\beta_2<1$ we have that $X\sim \beta_1\circ Z_{\beta_1}\sim \beta_2\circ(\beta_1/\beta_2)\circ Z_{\beta_1}$, we have that $Z_{\beta_2}\sim
(\beta_1/\beta_2)\circ Z_{\beta_1}$, where the right side is stochastically smaller than $Z_{\beta_1}$. Therefore $Z_\beta$ converges in distribution to some $Z_\al$ as $\beta\downarrow\al$ where $Z_\al$ may
possibly have a defective distribution (infinite with positive probability). Since $G_{Z_\beta}(s)=G_X(s/\beta)$ for $s\in [0,\beta]$ (hence for $s\in[0,\al]$), and $G_X(s/\beta)$ converges to $G_X(s/\al)$ for
every $s\in[0,\al]$, it follows that $G_{Z_\al}(s)=G_X(s/\al)$ for all $s\in[0,\al]$ and hence $G_X(s)=G_{Z_\al}(\al s)$ with $1=G_X(0)=G_{Z_\al}(0)$ which implies that the distribution of $Z_\al$ is proper and
hence $X\in \MM\al$. This implies that $\MM{\al+}=\MM\al$.
\end{proof}

For every  $X\in\MM{1}$ the value
$\nor X=\inf\{\al\in[0,1]:X\in\MM{\al}\}$ is well defined and
\begin{align}
X\in\MS{\al}\quad\Leftrightarrow\quad \nor X=\al.
\end{align}
The mapping $\nor{\cdot}:\MM{1}\to\sq R^+$ shares  some properties with a norm as follows.

\begin{theorem} \label{Th:properties} Let $X,Y\in\MM 1$ be independent and $\al\in[0,1]$. Then,
\begin{enumerate}
\item $\nor{{X+Y}}\leq \max(\nor{{X}},\nor{{Y}})\leq \nor{{X}}+\nor{{Y}}$,
\item $\nor{{\al\cc X}}=\al\nor{{X}}$,
\item When $X$ is a.s. bounded then $\rho(X)=0$ if and only if $X$ is a.s. $0$.
\end{enumerate}
\end{theorem}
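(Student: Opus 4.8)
The three parts all reduce to manipulating thinning representations, and the enabling facts are elementary. From the definition $\rho(X)=\inf\{\al:X\in\MM\al\}$ together with the nesting $\MM\al\subset\MM\beta$ for $\al<\beta$ (Theorem~\ref{udi}), it follows that for \emph{every} $\gamma>\rho(X)$ we have $X\in\MM\gamma$, hence $X\deq\gamma\cc Z$ for some $Z\in\MM 1$. The plan is to feed such representations into the composition and additivity laws listed after~\eqref{basic}, namely $\al\cc(\beta\cc X)\deq(\al\beta)\cc X$ and $\al\cc(X+Y)\deq\al\cc X+\al\cc Y$, supplemented for part~(iii) by the complete-monotonicity characterization of Theorem~\ref{ThCM0}. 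I deliberately work with $\gamma$ \emph{strictly above} $\rho(X)$ rather than at $\rho(X)$ itself, so that $\gamma>0$ and the thinning is genuine; this sidesteps the value $0$, where $\MM{0+}\neq\MM 0$ means the infimum need not be attained, even though Theorem~\ref{udi} does give attainment for positive norms.

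For part~(i), set $m=\max(\rho(X),\rho(Y))$ and fix any $\gamma>m$. Then $\gamma>0$, and $X\deq\gamma\cc Z_X$, $Y\deq\gamma\cc Z_Y$ with $Z_X,Z_Y$ chosen independent. Additivity of thinning gives $X+Y\deq\gamma\cc Z_X+\gamma\cc Z_Y\deq\gamma\cc(Z_X+Z_Y)\in\MM\gamma$. As this holds for every $\gamma>m$, taking the infimum yields $\rho(X+Y)\le m$. The second inequality $m\le\rho(X)+\rho(Y)$ is immediate from $\rho(X),\rho(Y)\ge 0$.

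For part~(ii) the case $\al=0$ is trivial since $0\cc X\deq 0$, so take $\al\in(0,1]$ and write $r=\rho(X)$. For the upper bound, fix $\gamma>r$; then $\al\cc X\deq\al\cc(\gamma\cc Z)\deq(\al\gamma)\cc Z\in\MM{\al\gamma}$, so $\rho(\al\cc X)\le\al\gamma$, and letting $\gamma\downarrow r$ gives $\rho(\al\cc X)\le\al r$. The lower bound is the crux of the whole theorem. I take any $\beta$ with $\al\cc X\in\MM\beta$ and aim to show $\beta\ge\al r$; since $r\le 1$, this is automatic if $\beta>\al$, so assume $\beta\le\al$. Writing $\al\cc X\deq\beta\cc W$ and applying~\eqref{basic} on both sides turns this into $G_X(\al s)=G_W(\beta s)$ on $[0,2]$; substituting $t=\al s$ gives $G_X(t)=G_W(\gamma t)$ on an interval near $0$, with $\gamma=\beta/\al\in(0,1]$. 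The function $t\mapsto G_W(\gamma t)$ is completely monotone on $(0,1/\gamma)$ with value $1$ at $0+$ and $0$, so Theorem~\ref{ThCM0}(i) forces $X\in\MM\gamma$, whence $\gamma\ge r$, i.e.\ $\beta\ge\al r$. Taking the infimum over admissible $\beta$ gives $\rho(\al\cc X)\ge\al r$, and equality follows. The main obstacle here is precisely the bookkeeping that routes the rescaled identity into Theorem~\ref{ThCM0}(i) on the correct interval.

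For part~(iii), the implication ``$X\deq 0$ a.s.\ $\Rightarrow\rho(X)=0$'' is immediate, as the constant $0$ lies in $\MM 0$. For the converse, $\rho(X)=0$ means $X\in\MM{0+}=\bigcap_{\al>0}\MM\al$, so by Theorem~\ref{ThCM0}(i) each membership provides an extension of $G_X$ that is completely monotone on $(0,1/\al)$; since $G_X$ is analytic on $(0,2)$, these locally defined extensions must coincide and patch into a single completely monotone $f$ on $(0,\infty)$ with $f=G_X$ on $[0,2]$. Because $X$ is a.s.\ bounded, $G_X$ is a polynomial, and agreeing with $f$ on an interval forces $G_X=f$ identically, so $G_X$ is completely monotone on $(0,\infty)$. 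But a completely monotone function is nonnegative and nonincreasing, hence bounded there, while a polynomial is bounded on $(0,\infty)$ only if constant; thus $G_X\equiv G_X(0)=1$ and $X\deq 0$ a.s. The one delicate point is the patching step, which rests on the analyticity of the a.p.g.f.\ to guarantee that the piecewise extensions agree.
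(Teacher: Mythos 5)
Your argument is correct. Parts (i) and (ii) follow essentially the same route as the paper: the upper bounds come from the nesting $\MM{\al}\subset\MM{\beta}$ together with the additivity and composition rules for thinning, and the lower bound in (ii) comes from ``cancelling'' the $\al$-thinning from $\al\circ X\deq\beta\circ W$. The only real divergence is in how that cancellation and part (iii) are justified. For the cancellation, the paper simply writes $\al\circ X\deq\al\circ((\beta/\al)\circ W)$ and peels off the outer $\al$ (implicitly using injectivity of $\al$-thinning for $\al>0$), exploiting that by Theorem~\ref{udi} the infimum is attained so one can work with $X\sim\rho(X)\circ Z$ directly; you instead avoid attainment altogether and route the step through Theorem~\ref{ThCM0}(i) on the rescaled a.p.g.f.\ identity, which is more verbose but makes the analytic-continuation content explicit. (One pedantic omission: you should dispose of the case $\beta=0$ separately, since then $\gamma=\beta/\al$ is not in $(0,1]$; but $\al\circ X\deq 0$ with $\al>0$ forces $X\deq 0$, so $\beta=0=\al\rho(X)$ and nothing is lost.) For part (iii) your proof is genuinely different: the paper gives a two-line elementary estimate, bounding $P(X=i)=\sum_{k=i}^n\binom{k}{i}\al^i(1-\al)^{k-i}P(Z_\al=k)\le 2^n\al^i\to 0$ for $i\ge1$ as $\al\downarrow0$, using only that $Z_\al$ is supported on $\II n$ whenever $X$ is. You instead patch the completely monotone extensions guaranteed by $X\in\MM{0+}$ into a single completely monotone function on $(0,\infty)$ agreeing with the polynomial $G_X$, and conclude that a bounded polynomial is constant. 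This is heavier machinery (it essentially re-proves the $\MM{0+}\Rightarrow$ mixed-Poisson half of Theorem~\ref{T1} inline, which the paper only establishes afterwards and then observes, in the remark following it, implies exactly your conclusion for bounded support), but it is self-contained and correct; the paper's computation is the more elementary of the two and generalizes nothing, while yours makes transparent why bounded support is the obstruction.
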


\begin{proof}
(i) For every $\alpha,\beta\in[0,1]$ such that $X\in\MM \alpha\subset\MM {\max(\alpha,\beta)}$ and $Y\in\MM \beta\subset\MM{\max(\alpha,\beta)}$ we have $X+Y\in\MM {\max(\alpha,\beta)}$ and thus $\rho(X+Y)\le \max(\alpha,\beta)$. Setting $\alpha=\rho(X)$ when $\rho(X)>0$ or letting $\alpha\downarrow 0$ when $\rho(X)=0$ and similarly for $Y$ completes the proof.

(ii) Obviously $0\circ X\sim 0$ and thus $\rho(0\circ X)=0=0\cdot \rho(X)$, thus it suffices to prove this for $\alpha\in(0,1]$.

We first note that if $X\in \MM\beta$ then $\alpha\circ X\in\MM{\alpha\beta}$ and thus $\rho(\alpha\circ X)\le \alpha\beta$. By taking $\alpha=\rho(X)$ when $\rho(X)>0$ and letting $\beta\downarrow 0$ when $\rho(X)=0$, this implies that $\rho(\alpha\circ X)\le \alpha \rho(X)$. 

When $\rho(X)=0$ we have $0\le\rho(\alpha\circ X)\le \alpha\rho(X)=0$ and thus $\rho(\alpha\circ X)=\alpha\rho(X)=0$.

When $\rho(X)\in(0,1]$ then $X\sim \rho(X)\circ Z$ for some $Z\in\MM 1$. If there was a $\beta$ with $\nor{\al \cc
X}<\beta<\al\nor X$ then there was a $W\in\MM 1$ such that $\al\cc X\sim\beta \cc W$. Since $\beta/\al<\nor{X}\leq 1$ we can write
$\al\cc X=\al\cc (\afrac{\beta}{\al}) \cc W$ (recalling $\alpha>0$),
implying that $X\sim(\afrac{\beta}{\al}) \cc W$ and hence $\afrac{\beta}{\al}\geq \nor{X}$, contradicting $\beta<\alpha\rho(X)$.

(iii) Clearly, if $P(X=0)=1$ then $\rho(X)=0$. Conversly, when $\rho(X)=0$ then either $X\sim 0\circ Z\sim 0$ for some $Z\in\MM 1$ and we are done or for every $\alpha\in(0,1]$ there exists some $Z_\alpha\in \MM 1$ such that $X=\alpha\circ Z_\alpha$. Let $n$ be such that $P(X\le n)=1$. Then $P(Z_\alpha\le n)=1$ for all $\alpha\in(0,1]$. For every $1\le i\le n$ we have that $P(X=i)=\sum_{k=i}^n{k\choose i}\alpha^k(1-\alpha)^{i-k}P(Z_\alpha=k)$ which converges to zero as $\alpha\downarrow 0$ and thus $P(X=0)=1$.
\end{proof}

\begin{remark}
It is tempting to conjecture that  the left inequality in (i) of Theorem~\ref{Th:properties} is actually an equality. However, this turns out to be false in general. Another guess could be that it should always hold that $\rho(X+Y)\ge \min(\rho(X),\rho(Y))$. This is false as well. In Corollary~\ref{rho} to follow we will give an example where $\rho(X+Y)$ is strictly smaller than both $\rho(X)$ and $\rho(Y)$.
\end{remark}

\begin{remark}
It is interesting to identify a connection between Corollary~\ref{CorExt} and $\rho(X)$. First note that $\rho(X)=1$ if and only if $X\in\MS 1$ which holds if and only if  $G_X(s)$ is not completely monotone on $(1,c)$ for any $c\in (1,2]$. From the proof of Corollary~\ref{CorExt} it may be concluded that, when $\rho(X)\in[1/2,1)$, $X\in\MM {\rho(X)}$ if and only if $G_X(s)$ is completely monotone on $(0,1/\rho(X))$. In this case $X\sim \rho(X)\circ Z$ where $Z\in\MS 1$ so that $G_Z(s)$ is not completely monotone on $(1,c)$ for any $c\in(1,2]$ and thus $G_X(s)=G_Z(\rho(X)s)$ is not completely monotone on $(1/\rho(X),d)$ for any $d\in(1/\rho(X),2]$.

Assume that for some $n\ge 0$ we have that $\rho(X)\in[2^{-(n+1)},2^{-n})$. Let $Z\in \MS 1$ be such that $X\sim \rho(X)\circ Z\sim 2^{-n}\circ Z_n$ where $Z_n\sim (2^n\rho(X))\circ Z$. Then $G_{Z_n}(s)$ is completely monotone on $(0,1/(2^n\rho(X)))$ and is not completely monotone on $(1/(2^n\rho(X)),d]$ for any $d\in (1/(2^n\rho(X)),2]$.

This implies the following procedure. If $G_X(s)$ is completely monotone on $(1,2)$, then take $Z_1\in\MM 1$ such that $X\sim (1/2)\circ Z_1$. If $G_{Z_1}(s)$ is completely monotone on $(1,2)$ then take $Z_2\in\MM 1$ such that $Z_1\sim(1/2)\circ Z_2$. We continue like this until the first $n$ for which $G_{Z_n}(s)$ is not complely monotone on $(1,2)$. If it is not completely monotone on $(1,c)$ for any $c\in(1,2]$ then $\rho(X)=2^{-n}$. Otherwise we take the largest $c\in(1,2)$ for which $G_{Z_n}(s)$ is completely monotone on $(1,c)$ (necessarily strictly between $1$ and $2$) and then $\rho(X)=1/(2^nc)$. Note that if we take $Z\in \MS 1$ such that $Z_n\sim (1/c)\circ Z$ then we have that
\begin{equation}
X\sim (1/2)\circ Z_1\sim\ldots\sim(1/2^n)\circ Z_n\sim(1/2^n)\circ(1/c)\circ Z\sim\rho(X)\circ Z\ .
\end{equation}

How do we identify that $\rho(X)=0$? This will happen if for any $n$, $G_{Z_n}(s)$ is completely monotone on $(1,2)$, so that in this case this process never ends. Since $G_{Z_n}(s/2^n)$ is completely monotone on $(0,2^n)$ this will eventually result in a function which is completely monotone on $(0,\infty)$ and agrees with $G_X(s)$ on $[0,1)$ (hence, as discussed earlier, also on $[0,2]$), as expected (see Theorem~\ref{T1} below).
\end{remark}

If $0<\nor X\leq \al\leq 1$ then it makes sense to let $\aaa\cc X$ denote the random variable $Z$ on the right hand side of $X=\al\cc Z$, so that $X\sim\al\cc(\aaa\cc X)$ for every $\al\in[\nor X,1]$. On the other hand, from $\al\circ X\sim\al\circ Z$ for $Z\sim X$ it follows that $\al^{-1}\circ(\al\circ X)\sim X$ for all $\al\in(0,1]$.

When $\nor X=0$, then for $\alpha,s\in(0,1]$ we have
\begin{align*}
\LL{\al  (\al\inv\cc X)}(s)=\GG{(\al\inv\cc X)}(1-e^{-\al s})=\GG{X}\kla{\frac{1-e^{-\al s}}{\al s}\,s}
\end{align*}
which converges to $G_X(s)$ as $\alpha\downarrow 0$. Since $G_X(s)\to 1$ as $s\downarrow 0$, then by the continuity theorem for LST's (which also holds when restricting to $[0,1)$) we have that $\alpha(\alpha^{-1}\circ X)$ converges in distribution to some nonnegative $Y$ satisfying $G_X(s)=L_Y(s)$ on $[0,1)$ (hence, also on $[0,2]$). Therefore, $X\sim N(Y)$ as explained in the paragraph preceding Theorem~\ref{ThCM0}. Also, if $X\sim N(Y)$ then since $\alpha\circ N(Y/\alpha)\sim N(Y)$ for every $\alpha\in(0,1]$, this implies that $\rho(X)=0$. Therefore we have the following known result that can be traced back to \cite{mecke}, Satz 4.2 (see also \cite{gr}, Theorem 2.3 and \cite{jk}).

\begin{theorem}\label{T1}
$X\in\MS 0$ if and only if $X$ has a mixed Poisson distribution.
\end{theorem}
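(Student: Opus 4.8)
The plan is to reduce the statement $X\in\MS 0$ to the numerical condition $\nor X=0$ and then to connect it to the characterization of mixed Poisson laws already established in part~(ii) of Theorem~\ref{ThCM0}. By definition $\MS 0=\MM{0+}=\bigcap_{\ga\in(0,1]}\MM\ga$, so that $X\in\MS 0$ is equivalent to $\nor X=0$, i.e. to $X\in\MM\al$ for every $\al\in(0,1]$. Recall also that Theorem~\ref{ThCM0}(ii) tells us that $X$ is mixed Poisson exactly when there is a nonnegative $Y$ with $\GG X(s)=\LL Y(s)$ on $[0,2]$. Thus the two directions amount to passing between ``$X\in\MM\al$ for all small $\al$'' and ``$\GG X$ extends to a completely monotone function on $(0,\infty)$''.

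For the implication $X\in\MS 0\Rightarrow$ mixed Poisson, I would exploit that $\nor X=0$ forces $\aaa\cc X$ to be a genuine (non-defective) member of $\MM 1$ for every $\al\in(0,1]$, and then examine the rescaled variables $\al(\aaa\cc X)$, which take values in $\al\sqNN$. Using $\al\cc(\aaa\cc X)\sim X$ together with~\eqref{basic}, their LSTs satisfy
\[
\LL{\al(\aaa\cc X)}(s)=\GG{\aaa\cc X}(1-e^{-\al s})=\GG X\kla{\frac{1-e^{-\al s}}{\al s}\,s}.
\]
Since $(1-e^{-\al s})/(\al s)\to1$ as $\al\downarrow0$ and $\GG X$ is continuous, the right-hand side converges to $\GG X(s)$ for each $s\in(0,1]$; because $\GG X(0+)=1$, the continuity theorem for LSTs (applied on $[0,1)$) produces a nonnegative $Y$ with $\LL Y=\GG X$ on $[0,1)$. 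Analyticity of both functions on $(0,2)$ then upgrades this equality to $[0,2]$, and Theorem~\ref{ThCM0}(ii) identifies $X\sim N(Y)$.

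For the converse, I would use the thinning invariance of the Poisson family, $\al\cc\Cpoi\la\sim\Cpoi{\al\la}$. If $X\sim N(Y)$, then conditionally on $Y$ we have $\al\cc N(Y/\al)\sim\Cpoi{\al\cdot Y/\al}=\Cpoi Y$, hence unconditionally $\al\cc N(Y/\al)\sim N(Y)\sim X$, with witness $N(Y/\al)\in\MM 1$. Therefore $X\in\MM\al$ for every $\al\in(0,1]$, so $\nor X=0$ and $X\in\MS 0$.

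The delicate part is the limit $\al\downarrow0$ in the forward direction. One must first be sure that $\aaa\cc X$ is proper for each $\al$ (this is exactly what $\nor X=0$ buys us), then check that pointwise convergence of the LSTs merely on $[0,1)$ is enough to invoke the continuity theorem and, crucially, that the limiting transform corresponds to an honest nonnegative $Y$ rather than a defective limit; this is ensured by $\GG X(0+)=1$. Finally the equality $\LL Y=\GG X$ must be propagated from $[0,1)$ to all of $[0,2]$ by analytic continuation before Theorem~\ref{ThCM0}(ii) can be applied. By contrast, the converse is routine once the Poisson thinning identity is in hand.
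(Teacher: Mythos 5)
Your argument is correct and is essentially identical to the paper's own proof: the forward direction via the LST of $\al(\aaa\cc X)$, the limit $\GG X\kla{\frac{1-e^{-\al s}}{\al s}\,s}\to\GG X(s)$, the continuity theorem on $[0,1)$, and the extension to $[0,2]$ feeding into Theorem~\ref{ThCM0}(ii); and the converse via $\al\cc N(Y/\al)\sim N(Y)$. The extra care you take about properness of $\aaa\cc X$ and the non-defectiveness of the limit is exactly the role played by $\nor X=0$ and $\GG X(0+)=1$ in the paper's discussion.
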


\begin{remark}
Since $N(Y)$ has unbounded support whenever $Y$ is not identically zero, any random variable with bounded support, other than the constant zero, cannot be in $\MS 0$.
\end{remark}

The following theorem generalizes Proposition 2.1 in \cite{jk}.

\begin{theorem}\label{PoiApp} [Poisson approximation] Let $S_n=\sum_{k=1}^n X_{kn}$, where $X_{1n},X_{2n},\ldots,X_{nn}$ are i.i.d. \intval random variables with $X_{1n}\dto X$ and $\ee X_{1n}\to EX$ as $n\to\infty$, where
$\ee{X}<\infty$. Also assume that $a_n\in[0,1]$ with $na_n\to \lambda\in(0,\infty)$ as $n\to\infty$. Then
\begin{align*}
a_n\cc S_n\dto N(\lambda\ee{X}),\qquad n\to \infty.
\end{align*}
\end{theorem}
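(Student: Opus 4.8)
The plan is to carry out the whole argument at the level of the \apgf\ and to reduce the assertion to a single pointwise limit on $(0,1)$, after which the continuity theorem finishes the job. Fix $s\in(0,1)$; then $a_ns\in[0,s]\subset[0,1)$. Because the $X_{kn}$ are i.i.d., the \apgf\ of the sum factorises, $G_{S_n}(t)=\ee{(1-t)^{S_n}}=\bkla{G_{X_{1n}}(t)}^n$ for $t\in[0,1]$, and combining this with the thinning relation~\eqref{basic} gives
\begin{align*}
G_{a_n\cc S_n}(s)=G_{S_n}(a_ns)=\bkla{G_{X_{1n}}(a_ns)}^n.
\end{align*}
Since the \apgf\ of $\Cpoi\mu$ equals $e^{-\mu s}$, the target $N(\lambda\ee X)$ has \apgf\ $e^{-\lambda\ee{X}s}$, so it is enough to show $\bkla{G_{X_{1n}}(a_ns)}^n\to e^{-\lambda\ee{X}s}$ for each $s\in(0,1)$. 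As $G_{X_{1n}}(a_ns)\in(0,1]$, this is the elementary implication ``$nb_n\to c\Rightarrow(1-b_n)^n\to e^{-c}$'', so everything reduces to proving
\begin{align*}
n\bkla{1-G_{X_{1n}}(a_ns)}\longrightarrow\lambda\ee{X}s.
\end{align*}

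For the reduction I would exploit the telescoping identity $1-(1-t)^k=t\sum_{j=0}^{k-1}(1-t)^j$, valid for $t\in[0,1]$ and $k\in\sqNN$. Applying it with $t=a_ns$ and $k=X_{1n}$, taking expectations (all summands non-negative, so the interchange of sum and expectation is justified), and multiplying by $n$ gives
\begin{align*}
n\bkla{1-G_{X_{1n}}(a_ns)}=(na_ns)\sum_{j=0}^\infty(1-a_ns)^j\,\pp{X_{1n}>j}.
\end{align*}
Here $na_ns\to\lambda s$ by assumption, so the entire matter comes down to the convergence of the series to $\sum_{j=0}^\infty\pp{X>j}=\ee X$.

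This last convergence is the crux, and it is precisely where the hypothesis $\ee{X_{1n}}\to\ee X$ is indispensable. For each fixed $j$ we have $(1-a_ns)^j\to1$ (note $a_n\to0$, forced by $na_n\to\lambda$) and $\pp{X_{1n}>j}\to\pp{X>j}$ (integer-valued weak convergence), so every individual term behaves correctly; the obstacle is to control the tail $\sum_{j\ge M}(1-a_ns)^j\pp{X_{1n}>j}\le\sum_{j\ge M}\pp{X_{1n}>j}=\ee{(X_{1n}-M)^+}$ uniformly in $n$. The naive bound $1-(1-t)^j\le jt$ is useless here, as it would require a second moment. Instead I would invoke uniform integrability of $\{X_{1n}\}$, which follows from $X_{1n}\dto X$ together with $\ee{X_{1n}}\to\ee X<\infty$: the identity $\ee{(X_{1n}-M)^+}=\ee{X_{1n}}-\ee{\min(X_{1n},M)}$ and weak convergence of the bounded continuous map $x\mapsto\min(x,M)$ give $\ee{(X_{1n}-M)^+}\to\ee{(X-M)^+}$ for each fixed $M$, whence $\sup_n\ee{(X_{1n}-M)^+}\to0$ as $M\to\infty$. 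A three-$\veps$ argument (pick $M$ so both tails are uniformly small, then send $n\to\infty$ in the finite head, using $(1-a_ns)^j\to1$ and $\pp{X_{1n}>j}\to\pp{X>j}$) then yields $\sum_{j\ge0}(1-a_ns)^j\pp{X_{1n}>j}\to\ee X$.

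Putting the pieces together, $n\bkla{1-G_{X_{1n}}(a_ns)}\to\lambda\ee{X}s$ and hence $G_{a_n\cc S_n}(s)\to e^{-\lambda\ee{X}s}$ for every $s\in(0,1)$. Since the limit is the \apgf\ of the proper distribution $\Cpoi{\lambda\ee X}=N(\lambda\ee X)$ and $(0,1)$ certainly contains an accumulation point, the continuity theorem for (alternate) probability generating functions gives $a_n\cc S_n\dto N(\lambda\ee X)$. I expect the uniform control of the series tail — the genuine use of the moment hypothesis rather than mere convergence in distribution — to be the only real obstacle; the remainder is bookkeeping with the \apgf.
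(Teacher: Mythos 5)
Your proof is correct, and it takes a genuinely different route from the paper's. The paper starts from the same reduction $G_{a_n\cc S_n}(s)=G_{X_{1n}}^n(a_ns)$, but then writes the power as $\exp\bkla{na_ns\cdot\log G_{X_{1n}}(a_ns)/(a_ns)}$ and squeezes $\log G_{X_{1n}}(a_ns)/(a_ns)$ between two bounds converging to $-\ee{X}$: from below by Jensen's inequality applied to $\log\ee{(1-a_ns)^{X_{1n}}}$, and from above via $\log x\le x-1$ together with a mean-value-theorem estimate $((1-u)^x-1)/u\le -x(1-u)^{(x-1)^+}$, weak convergence of the bounded continuous function $x\mapsto x(1-\epsilon)^{(x-1)^+}$, and finally monotone convergence as $\epsilon\downarrow 0$. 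You avoid logarithms entirely, reduce to $n(1-G_{X_{1n}}(a_ns))\to\lambda\ee{X}s$, and let the moment hypothesis act through the tail-sum identity $1-G_{X_{1n}}(t)=t\sum_{j}(1-t)^j\pp{X_{1n}>j}$ combined with the uniform integrability supplied by $X_{1n}\dto X$ and $\ee{X_{1n}}\to\ee{X}<\infty$. Your version makes it completely transparent where the hypothesis $\ee{X_{1n}}\to\ee{X}$ is used (it is precisely uniform control of the series tail) and dispenses with Jensen and the mean value theorem; the paper's version keeps the argument as a single squeeze on the log-\apgf and never has to interchange a double limit over $j$ and $n$. One small imprecision on your side: the claim that $\sup_n\ee{(X_{1n}-M)^+}\to 0$ as $M\to\infty$ is a bit stronger than what your argument literally yields (for finitely many small $n$ the mean of $X_{1n}$ need not even be finite a priori); what you actually obtain, and all your three-$\veps$ scheme needs, is $\limsup_{n}\ee{(X_{1n}-M)^+}=\ee{(X-M)^+}\to 0$ as $M\to\infty$.
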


\begin{proof} From \eqref{basic} it follows that for every $s\in[0,1]$,
\begin{align*}
\GG{a_n\cc S_n}(s)
=\GG{S_n}(a_ns)=\GG{X_{1n}}^n(a_ns)=\exp\left(\frac{\log G_{X_{1n}}(a_ns)}{a_ns}\cdot na_n s\right)\ .
\end{align*}
Thus, showing that the right hand side converges to $e^{-\lambda \ee{X} s}=L_{\lambda \ee X}(s)=G_{N(\lambda \ee{X})}(s)$, will complete the proof. Since $na_n\to\lambda$,
it remains to show that
\[\frac{\log G_{X_{1n}}(a_ns)}{a_ns}\to -\ee{X}\]
as $n\to\infty$. By Jensen's inequality it follows that
\begin{equation}
\frac{\log G_{X_{1n}}(a_n s)}{a_n s}=\frac{\log\EE{1-a_n s}^{X_{1n}}}{a_n s}\ge \ee X_{1n}\frac{\log(1-a_n s)}{a_n s}
\end{equation}
where the right hand side converges to $-\ee{X}$ as $n\to\infty$. Now, since $\log x\le x-1$ for $x>0$ we have that
\begin{equation}
\frac{\log G_{X_{1n}}(a_n s)}{a_n s}\le \frac{\ee{(1-a_n s)^{X_{1n}}}-1}{a_n s}\,.
\end{equation}

Note that for every $u\in(0,1)$ and every $x\in[0,\infty)$ we have (mean value theorem) that for some $v\in(0,u)$,
\begin{equation}
\frac{(1-u)^x-1}{u}=-x(1-v)^{x-1}\le -x(1-u)^{(x-1)^+}\,.
\end{equation}
Therefore, for every $\epsilon\in(0,1)$ and $n\ge s/\epsilon$ we have that
\begin{equation}
\frac{\ee{(1-a_n s)^{X_{1n}}}-1}{a_n s}\le -\ee{X_{1n}(1-a_n s)^{(X_{1n}-1)^+}}\le -\ee{X_{1n}(1-\epsilon)^{(X_{1n}-1)^+}}\,.
\end{equation}
As $x(1-\epsilon)^{(x-1)^+}$ is bounded and continuous (in $x$) on $[0,\infty)$, it follows that the right side converges, as $n\to\infty$, to $-\ee{X(1-\epsilon)^{(X-1)^+}}$, which in turn (monotone convergence)
converges to $-\ee{X}$ as $\epsilon\downarrow 0$. Therefore we have shown that
\begin{equation}
-\ee{X}\le \liminf \frac{\log G_{X_{1n}}(a_n s)}{a_n s}\le \limsup \frac{\log G_{X_{1n}}(a_n s)}{a_n s}\le -\ee{X}
\end{equation}
and we are done.
\end{proof}

\begin{remark}
As a sanity check, note that for $X_{1n}=1$ we have that $a_n\circ S_n\sim \Cbinom n{a_n}$ which implies the well known fact that $\Cbinom n{a_n}\dto\Cpoi{\lambda}$ whenever $na_n\to\lambda$. Also note that
$n^{-1}\circ S_n\to N(\ee X)$ which is a distributional version of a law of large numbers in the current context. Finally note that if either the supports of $X_{1n}$ are contained in a (common) bounded set or
$X_{1n}$ is stochastically increasing in $n$, then the condition $\ee{X_{1n}}\to \ee{X}$ is redundant.
\end{remark}

For $X\in \MM 1$, denote $\supp X=\{i|\, i\in\sqNN,\, \PP{X=i}>0\}$ (support of the distribution of $X$). We will say that $\supp X$ {\em contains holes} if for some $n\ge 1$, $P(X=n)>0=P(X=n-1)$. Note that a
special case is when $P(X=n)>0$ for some $n\ge1$ and $P(X=0)=0$. Also note that trivially the support of the constant zero does not contain holes, but that of any other positive integer
constant does.

\begin{lemma}\label{suppe} For every $\al\in(0,1)$, $\supp {\al\cc X}$ does not contain holes
and
\begin{equation}
\sup\{i|i\in\sqNN,\,P(X=n)>0\}=\sup\{i|i\in\sqNN,\,P(\al\circ X=n)>0\}\ .
\end{equation}
\end{lemma}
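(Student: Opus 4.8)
The plan is to compute the mass function of $\al\cc X$ explicitly and read off the shape of its support directly. Conditioning on $X=n$ in the representation $\al\cc X\sim\sum_{i=1}^X B_i$ shows that, given $X=n$, the thinned variable is $\Cbinom n\al$, so that for every $k\in\sqNN$
\begin{equation*}
P(\al\cc X=k)=\sum_{n\ge k}P(X=n)\binom{n}{k}\al^k(1-\al)^{n-k}.
\end{equation*}
Since $\al\in(0,1)$, each factor $\al^k(1-\al)^{n-k}$ is strictly positive, and $\binom{n}{k}>0$ precisely when $0\le k\le n$. Hence the sum is strictly positive if and only if there is some $n\ge k$ with $P(X=n)>0$, which yields the clean description
\begin{equation*}
\supp{\al\cc X}=\{k\in\sqNN:\ k\le n\text{ for some }n\in\supp X\}.
\end{equation*}

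First I would observe that this set is \emph{downward closed}: if $k$ lies in it, witnessed by some $n\ge k$ with $n\in\supp X$, then the same $n$ witnesses membership of every $k'\le k$. A downward closed subset of $\sqNN$ is either $\{0,1,\dots,N\}$ for a finite $N$ or all of $\sqNN$; in either case it is an initial segment and therefore contains no holes, which settles the first assertion.

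Next I would deduce the supremum identity from the same description. Writing $N=\sup\supp X\in\sqNN\cup\{\infty\}$, the set $\{k:\ k\le n\text{ for some }n\in\supp X\}$ has supremum exactly $N$: no element of it can exceed $N$, while $k=N$ itself belongs to it whenever $N$ is finite (take $n=N$), and when $N=\infty$ the support of $X$ is unbounded, so every $k$ qualifies. Therefore $\sup\supp{\al\cc X}=N=\sup\supp X$. (The same two bounds can be seen structurally: the pointwise coupling $\sum_{i=1}^X B_i\le X$ gives $\sup\supp{\al\cc X}\le\sup\supp X$, while on $\{X=n\}$ the event that all $B_i=1$ has positive probability $\al^n$, forcing $n\in\supp{\al\cc X}$ and hence the reverse inequality.)

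I expect no genuine obstacle here; the only point that requires care is the role of the strict inclusion $\al\in(0,1)$, which is used in two distinct places and cannot be relaxed. The positivity of $(1-\al)^{n-k}$ (i.e.\ $\al<1$) is what forces every intermediate value $0\le k\le n$ into the support and thus removes holes---for $\al=1$ one has $\al\cc X\sim X$, whose support may well have holes. The positivity of $\al^k$ (i.e.\ $\al>0$) is what keeps the top value $n$ reachable and makes the supremum identity hold---for $\al=0$ one has $\al\cc X\equiv0$, whose support is $\{0\}$.
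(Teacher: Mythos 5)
Your proof is correct and rests on exactly the same observation as the paper's one-line argument: for each $n$ with $P(X=n)>0$ and each $0\le k\le n$, the term $\binom{n}{k}\al^k(1-\al)^{n-k}P(X=n)$ is strictly positive because $\al\in(0,1)$, which simultaneously rules out holes and preserves the supremum of the support. The extra packaging via downward-closed sets is a pleasant way to organize the conclusion but does not change the substance.
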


\begin{proof} Immediate from the fact that for each $n\ge 1$ with $P(X=n)>0$ we have, for every $0\le k\le n$, that
$P(\al\circ X=k)\ge {n\choose k}\al^k(1-\al)^{n-k}P(X=n)>0$.
\end{proof}

\begin{theorem}\label{thmstar}
$\nor X=\al\in(0,1)$ (equivalently, $X\in\MS\al$) if and only if there is some $Z\in\MS1$ such that $X\sim\al\circ Z$. Moreover, any $X\in\MM 1$ such that $\supp X$ contains holes is in $\MS 1$.
\end{theorem}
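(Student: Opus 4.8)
The plan is to derive both assertions almost directly from the multiplicativity of $\rho$ established in Theorem~\ref{Th:properties}(ii), together with the hole-preservation statement of Lemma~\ref{suppe}; the identity $\rho(\alpha\circ Z)=\alpha\rho(Z)$ does essentially all of the work for the equivalence, and Lemma~\ref{suppe} handles the ``moreover'' clause.

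For the equivalence I would first treat the ``if'' direction. Suppose $Z\in\MS 1$, that is $\rho(Z)=1$, and $X\sim\alpha\circ Z$ with $\alpha\in(0,1)$. Then Theorem~\ref{Th:properties}(ii) gives $\rho(X)=\rho(\alpha\circ Z)=\alpha\rho(Z)=\alpha$, so $X\in\MS\alpha$. For the ``only if'' direction, suppose $\rho(X)=\alpha\in(0,1)$. By the equivalence $X\in\MS\alpha\Leftrightarrow\rho(X)=\alpha$ recorded after Theorem~\ref{udi}, and since $\MS\alpha\subseteq\MM\alpha$ by definition, we have $X\in\MM\alpha$, so there is some $Z\in\MM 1$ with $X\sim\alpha\circ Z$. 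Applying Theorem~\ref{Th:properties}(ii) once more yields $\alpha=\rho(X)=\alpha\rho(Z)$, and since $\alpha>0$ we may cancel to obtain $\rho(Z)=1$, i.e.\ $Z\in\MS 1$. This is exactly the desired representation.

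For the ``moreover'' claim, suppose $\supp X$ contains a hole. Then $X$ is not a.s.\ zero, so $X\notin\MM 0$ (the latter consisting only of the constant zero, whose support $\{0\}$ has no holes). I would then rule out $X\in\MM\alpha$ for every $\alpha\in(0,1)$ as well: if $X\sim\alpha\circ Z$ for some $Z\in\MM 1$ and $\alpha\in(0,1)$, then Lemma~\ref{suppe} forces $\supp X=\supp{\alpha\circ Z}$ to contain no holes, contradicting the hypothesis. Hence the only $\alpha\in[0,1]$ for which $X\in\MM\alpha$ is $\alpha=1$, so $\rho(X)=\inf\{\alpha\in[0,1]:X\in\MM\alpha\}=1$ and therefore $X\in\MS 1$.

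The argument is short precisely because the structural results have already been secured. The only point that genuinely requires care is the attainment of the infimum defining $\rho$: the step ``$\rho(X)=\alpha$ implies $X\in\MM\alpha$'' (rather than merely $X\in\bigcap_{\gamma>\alpha}\MM\gamma$) relies on the identity $\MM{\alpha+}=\MM\alpha$ from Theorem~\ref{udi}. Once that closedness is in hand, the scaling identity converts the membership question into the single arithmetic relation $\alpha=\alpha\rho(Z)$, and the proof closes immediately.
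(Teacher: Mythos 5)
Your proof is correct, and it reaches the conclusion by a slightly different route than the paper: you derive both directions of the equivalence from the scaling identity $\nor{\al\cc X}=\al\,\nor{X}$ of Theorem~\ref{Th:properties}(ii) (plus the attainment fact $\MM{\al+}=\MM\al$ from Theorem~\ref{udi}), whereas the paper re-runs the underlying contradiction arguments directly. Concretely, the paper shows ``only if'' by supposing $Z\in\MM\beta$ with $\beta<1$ and composing thinnings to contradict minimality of $\al$, and shows ``if'' by supposing $X\in\MM\beta$ with $\beta<\al$ and cancelling the outer $\al$-thinning to contradict $Z\in\MS 1$; these are exactly the mechanisms already packaged inside the proof of Theorem~\ref{Th:properties}(ii), so your version is a clean factoring of the same mathematics rather than a new idea. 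What your organization buys is brevity and a clear statement of where the only delicate point lies (attainment of the infimum defining $\nor{\cdot}$), which you correctly flag; what the paper's version buys is self-containedness, in that it does not depend on the reader having internalized the multiplicativity of $\nor{\cdot}$. There is no circularity in your approach, since Theorem~\ref{Th:properties}(ii) is proved before Theorem~\ref{thmstar} and does not use it. Your treatment of the ``moreover'' clause coincides with the paper's (Lemma~\ref{suppe} rules out $\MM\al$ for $\al\in(0,1)$), and you are in fact slightly more careful than the paper in explicitly excluding the case $\al=0$.
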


\begin{proof}
Assume that $\nor X=\al$ (equivalently, $X\in\MS\al$). Then there is a $Z\in\MM1$ such that $X\sim \al\circ Z$. Assume to the contrary that $Z\in \MM\beta$ for some $\beta\in(0,1)$. Then $Z\sim \beta\circ W$ for
some $W\in\MM1$ and thus $X\sim\al\circ(\beta\circ W)\sim (\al\beta)\circ W$ which contradicts the assumption that $\nor X=\al$. Thus, necessarily $Z\in\MS1$. Now, let $Z\in\MS1$ and take $X\sim \al\circ Z$.
Assume to the contrary that $X\in\MM\beta$ for some $\beta<\al$. Then, for some $W\in\MM 1$, $X\sim\beta\circ W$. Since $\al\circ (\beta/\al)\circ W\sim \beta\circ W\sim \al\circ Z$, then $Z\sim (\beta/\al)\circ
W$. This contradicts the assumption that $Z\in\MS1$.
Finally, by Lemma~\ref{suppe}, any distribution in $\MM1$ of which support contains holes cannot be in $\MM\al$ for any $\al\in(0,1)$ and is thus in $\MS1$.
\end{proof}

The next theorem shows that any mixed Poisson distribution not concentrated at zero is the distribution of a product of some random variable in $\MS\alpha$ (necessarily with unbounded support) with some independent Bernoulli random variable. 

\begin{theorem} Let $\al\in(0,1]$ and let $Y$ be a \nrv with $P(Y=0)<1$. Then there exist $X\in\MS\al$ and an independent Bernoulli $B$, such that
$BX\deq N(Y)$.
\end{theorem}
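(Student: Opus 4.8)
The plan is to translate everything into a.p.g.f.s and reduce the whole problem to the choice of a single Bernoulli parameter. If $B$ is Bernoulli with $P(B=1)=p$ and is independent of $X\in\MM 1$, then $P(BX=0)=(1-p)+pP(X=0)$ and $P(BX=k)=pP(X=k)$ for $k\ge 1$, so the a.p.g.f. of the product is $G_{BX}(s)=(1-p)+pG_X(s)$. Since the target $N(Y)$ has a.p.g.f. $G_{N(Y)}(s)=L_Y(s)$ on $[0,2]$ (as recalled before Theorem~\ref{ThCM0}), the requirement $BX\deq N(Y)$ is equivalent to the single functional identity
\[
G_X(s)=\frac{L_Y(s)-(1-p)}{p},\qquad s\in[0,2].
\]
Everything then rests on choosing $p$ so that the right-hand side is the a.p.g.f. of some $X\in\MM 1$ with $\rho(X)=\alpha$.

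The decisive choice is $p=1-L_Y(1/\alpha)$, i.e.\ $1-p=L_Y(1/\alpha)$, which is exactly what forces the candidate $g(s):=(L_Y(s)-(1-p))/p$ to vanish at $s=1/\alpha$. I would first dispose of the routine admissibility checks. Because $P(Y=0)<1$, so $Y$ is not a.s.\ zero, $L_Y(1/\alpha)\in(0,1)$, whence $p\in(0,1)$ is a legitimate Bernoulli parameter and $g(0)=g(0+)=1$. With this $p$ one gets $G_{BX}(s)=(1-p)+pg(s)=L_Y(1/\alpha)+\big(L_Y(s)-L_Y(1/\alpha)\big)=L_Y(s)$, so $BX\deq N(Y)$ holds automatically once $X$ is shown to exist; the real content is the membership $X\in\MS\alpha$.

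The heart of the argument, and the step I expect to need the most care, is that $g$ is completely monotone precisely on $(0,1/\alpha)$. The key point is that $g$ is real-analytic on all of $(0,\infty)$, agrees with $G_X$ on $(0,2)$, and for every $n\ge 1$ satisfies $(-1)^ng^{(n)}(s)=(-1)^nL_Y^{(n)}(s)/p\ge 0$ throughout $(0,\infty)$, since $L_Y$ is completely monotone and $p>0$. Thus the only possible failure of complete monotonicity of $g$ is in the sign of $g$ itself. As $Y$ is not a.s.\ zero, $L_Y$ (hence $g$) is strictly decreasing, so $g$ is positive on $(0,1/\alpha)$, vanishes at $1/\alpha$, and is negative on $(1/\alpha,\infty)$; in particular $g$ is completely monotone on $(0,1/\alpha)$ and on no larger interval. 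Restricting to $(0,1)\subseteq(0,1/\alpha)$ and using the characterisation of a.p.g.f.s recalled before Theorem~\ref{ThCM0}, $g$ is a genuine a.p.g.f., so it defines some $X\in\MM 1$.

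It remains to pin down $\rho(X)$. Taking $\alpha\in(0,1)$ first: since $g$ is completely monotone on $(0,1/\alpha)$ with $g(0+)=g(0)$ and agrees with $G_X$ near $0$, part~(i) of Theorem~\ref{ThCM0} gives $X\in\MM\alpha$, so $\rho(X)\le\alpha$. Conversely, if $X\in\MM\beta$ for some $\beta\in(0,\alpha)$, Theorem~\ref{ThCM0}(i) would supply a completely monotone (hence real-analytic) extension of $G_X$ to $(0,1/\beta)$ which, agreeing with $G_X$ and therefore with $g$ on a set with an accumulation point, must equal $g$ on $(0,1/\beta)$ by the identity theorem; but $g<0$ on $(1/\alpha,1/\beta)$, contradicting nonnegativity of completely monotone functions. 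Hence $\rho(X)=\alpha$, i.e.\ $X\in\MS\alpha$. For $\alpha=1$ the choice gives $G_X(1)=g(1)=0$, i.e.\ $P(X=0)=0$, so $\supp{X}$ contains holes and $X\in\MS 1$ directly by Theorem~\ref{thmstar}. Finally, the unboundedness of $\supp{X}$ mentioned in the surrounding text is automatic: $BX\deq N(Y)$ has unbounded support, while $\supp{BX}\setminus\{0\}=\supp{X}\setminus\{0\}$ because $p>0$.
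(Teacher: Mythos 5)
Your proof is correct, and you have in fact rediscovered exactly the object the paper uses: your $g(s)=\bigl(L_Y(s)-L_Y(1/\alpha)\bigr)/\bigl(1-L_Y(1/\alpha)\bigr)$ is the a.p.g.f.\ of $\alpha\circ Z$ where $Z$ is $N(Y/\alpha)$ conditioned on being positive, and your Bernoulli parameter $1-L_Y(1/\alpha)$ is the paper's. The difference lies in how membership in $\MS{\al}$ is verified. The paper works probabilistically: it writes down $Z$ with $P(Z=0)=0$ and $P(Z=n)\propto\ee{e^{-Y/\alpha}(Y/\alpha)^n/n!}$, observes that the hole at $0$ forces $Z\in\MS 1$, and then invokes Theorem~\ref{thmstar} ($X\sim\al\circ Z$ with $Z\in\MS 1$ iff $X\in\MS{\al}$) together with $\al\circ N(Y/\al)\sim N(Y)$; the whole verification is two lines. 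You instead stay at the transform level and prove $\rho(X)=\al$ analytically: complete monotonicity of $g$ holds on $(0,1/\al)$ but fails beyond it because $g$ changes sign at $1/\al$, and the identity theorem rules out $X\in\MM{\beta}$ for $\beta<\al$. Both routes are sound (your use of Theorem~\ref{ThCM0}(i) in each direction, the analyticity of completely monotone functions, and the separate treatment of $\al=1$ via the hole criterion are all consistent with the paper's toolkit). What your version buys is an explanation of where the Bernoulli parameter comes from --- it is forced by requiring $g(1/\al)=0$ --- and it avoids having to guess the conditioned mixed-Poisson form of $Z$; what it costs is length, and it hides the clean structural fact that $\MS{\al}=\al\circ\MS 1$ which the paper's argument puts front and center.
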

\begin{proof}
Let $P(Z=0)=0$ and for $n\ge 1$
\begin{equation}
P(Z=n)=\frac{1}{1-\LL Y(1/\al)}\ee{\left(e^{-Y/\alpha}\frac{(Y/\al)^n}{n!}\right)}\,.
\end{equation}
From Theorem~\ref{thmstar}, $Z\in\MS 1$ and thus $X\sim \al\circ Z\in\MS\al$. Let $B\sim\Cber{1-\LL Y(1/\al)}$ be independent of $Z$. It is easy to check that $BZ\sim N(Y/\al)$ and that
\begin{equation}
B(\al\circ Z)\sim \al\circ BZ\sim \al\circ N(Y/\al)\sim N(Y)
\end{equation}
and the proof is complete.\end{proof}

For a general $X\in\MM 1$ let $\zo X\sim X\given{(X\ge 1)}$. Obviously, as $\al\to 0$ the thinned variable $\al\cc X$ tends to zero, but $\zo{(\al\cc X)}$ may still converge weakly to a nonconstant random variable.

\begin{theorem}
Let $X\in\MM 1$ with $\PP{X=0}<1$. If $X$ has a finite mean then
\begin{align*}
&\zo{(\al\cc X)}\dto 1\as{\al\to 0}.
\intertext{If instead
$\PP{X\geq k}=k^{-\ga}L(k)$ with $\ga\in(0,1)$ and $L$  slowly varying at infinity then }
&\zo{(\al\cc X)}\dto \Csibuya{\ga}\as{\al\to 0}.
\end{align*}

\end{theorem}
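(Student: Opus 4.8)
The plan is to compute the \pgf of $\zo{(\al\cc X)}$ in closed form and then read off its $\al\downarrow 0$ limit, matching it with the \pgf of the claimed limiting law. First I would record $P_{\al\cc X}(s)=\ee(1-\al+\al s)^X=\GG X(\al(1-s))$ and $\pp{\al\cc X=0}=\ee(1-\al)^X=\GG X(\al)$, so that conditioning on $\{\al\cc X\ge 1\}$ gives, for $s\in[0,1)$,
\begin{align*}
\ee s^{\zo{(\al\cc X)}}=\frac{P_{\al\cc X}(s)-\pp{\al\cc X=0}}{1-\pp{\al\cc X=0}}=1-\frac{1-\GG X(\al(1-s))}{1-\GG X(\al)}\,.
\end{align*}
Thus everything reduces to the behaviour of $1-\GG X(u)$ as $u\downarrow 0$, for which the identity $\frac{1-\GG X(u)}{u}=\sum_{j\ge 0}q_j(1-u)^j$ with $q_j=\pp{X>j}$ is the natural tool.

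In the finite-mean case, monotone convergence gives $\frac{1-\GG X(u)}{u}\to\sum_{j\ge 0}q_j=\ee X$ as $u\downarrow 0$, where $\ee X\in(0,\infty)$ since $\pp{X=0}<1$; hence $1-\GG X(u)\sim(\ee X)u$. Substituting into the displayed ratio, for each fixed $s\in[0,1)$ the ratio tends to $(1-s)$ and therefore $\ee s^{\zo{(\al\cc X)}}\to s$, the \pgf of the constant $1$. Since $s\to 1$ as $s\to 1^-$ the limit is proper, and the continuity theorem for \pgfs yields $\zo{(\al\cc X)}\dto 1$.

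In the regularly varying case $\ee X=\infty$, and I would instead invoke a Tauberian theorem. Since $q_j\sim j^{-\ga}L(j)$ is regularly varying of index $-\ga\in(-1,0)$, Karamata's Tauberian theorem for power series (together with Karamata's summation theorem, which gives $\sum_{k\le n}q_k\sim n^{1-\ga}L(n)/(1-\ga)$ and hence pins down the constant) yields
\begin{align*}
1-\GG X(u)\sim\Gamma(1-\ga)\,u^\ga L(1/u)\as{u\downarrow 0}\,.
\end{align*}
Because $L$ is slowly varying, $L(1/(\al(1-s)))/L(1/\al)\to 1$, so the displayed ratio converges to $(1-s)^\ga$ and $\ee s^{\zo{(\al\cc X)}}\to 1-(1-s)^\ga$. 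As $\ga\in(0,1)$ this limit tends to $1$ at $s=1^-$ and is exactly the \pgf of the $\Csibuya\ga$ law, so the continuity theorem gives $\zo{(\al\cc X)}\dto\Csibuya\ga$.

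The generating-function identities and the continuity-theorem step are routine. The main obstacle is the asymptotic $1-\GG X(u)\sim\Gamma(1-\ga)u^\ga L(1/u)$: transferring the regularly varying tail $q_j$ into the near-origin behaviour of the power series $\sum_j q_j(1-u)^j$ is precisely where the Tauberian input is essential, and some care is needed both to obtain the correct constant $\Gamma(1-\ga)$ and to verify that the slowly varying factor cancels in the limiting ratio.
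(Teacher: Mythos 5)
Your proposal is correct and follows essentially the same route as the paper: compute the conditional generating function $1-\frac{1-\GG X(\al(1-s))}{1-\GG X(\al)}$, extract the behaviour of $1-\GG X(u)$ as $u\downarrow 0$ (linearly via the tail-sum identity and monotone convergence in the finite-mean case, and via Karamata's Tauberian theorem for power series in the regularly varying case), and invoke the continuity theorem. The only differences are cosmetic: you phrase everything through the \pgf and the tails $\pp{X>j}$, whereas the paper uses the \apgf and $\pp{X\ge k}$, yielding the same asymptotic $1-\GG X(u)\sim\Gamma(1-\ga)u^{\ga}L(1/u)$ and the same limits.
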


\begin{remark} The Sibuya distribution is described in Example \ref{Sibuya}.
Recall that a function $L$ is slowly varying if $L(cx)/L(x)\to 1$ as $x\to\infty$  for every $c>0$. The theorem shows that if we apply binomial thinning repeatedly to an $X$ with finite mean then $\al^n\cc X$, given that it is still positive, will most probably be one if $n$ is large. This is no longer true in the infinite mean case. 
\end{remark}

\begin{proof}
The \apgf of $\zo X$ is given by
$\GG{\zo{X}}(s)=\frac{\GG{X}(s)-\GG{X}(1)}{1-\GG{X}(1)}$.
Provided that $\ee{X}<\infty$ we have
\begin{align*}
\GG{\zo{(\al\cc X)}}(s)=1-\frac{1-\GG{X}(\al s)}{1-\GG{X}(\al)}=1-\frac{\al s\ee{X}+o(\al)}{\al\ee{X}+o(\al)}\to 1-s,
\end{align*}
which is the \apgf of the constant one. Now assume that $a_k:=\PP{X\geq k}=k^{-\ga}L(k)$ and let $A(s)=\sum_{k=0}^{\infty} (1-s)^ka_k$. By Karamata's theorem for power series (\cite{BGT}, Corollary 1.7.3) $a_k=k^{-\ga}L(k)$ implies that
\begin{align*}
A(s)\sim s^{\ga-1} L(1/s)\Gamma(1-\gamma)
\end{align*}
as $s\to 0$. Now
\begin{align*}
sA(s)&=s\sum_{k=0}^{\infty} (1-s)^k\sum_{i=k}^\infty p_i
=s\sum_{i=0}^{\infty}p_i\sum_{k=0}^i  (1-s)^k
=1-(1-s)\GG{X}(s).
\end{align*}
{\em i.e.,} as $s\to 0$
\begin{align*}
1-\GG{X}(s)=1-\frac{1-sA(s)}{1-s}\sim \Gamma(1-\gamma) s^\gamma L(1/s).
\end{align*}
This implies that
\begin{align*}
1-\GG{\zo{(\al\cc X)}}(s)=\frac{1-\GG{X}(\al s)}{1-\GG{X}(\al)}\sim \frac{(\al s)^\gamma}{\al^\gamma}\frac{L(1/(\al s))}{L(1/\al)}\to s^\gamma
\end{align*}
as $\al \to 0$ by the properties of slowly varying functions. $G_{(\alpha\circ X)_0}(s)=1-s^\gamma$ is the \apgf of a Sibuya distribution.
\end{proof}

 Recall the following. For a sequence $\{a_k|\,k\ge 0\}$ define $r=\limsup_{k\to\infty}|a_k|^{1/k}$, then the radius of convergence of $S(x)=\sum_{k=0}^\infty a_kx^k$ is $R=0$ when $r=\infty$, $R=\infty$ when $r=0$ and $R=1/r$ when $r\in (0,\infty)$. When $|x|<R$, $S(x)$ converges absolutely and is infinitely differentiable with $S^{(n)}(x)=n!\sum_{k=n}^\infty {k\choose n}x^{k-n}a_k$ which also has a radius of convergence $R$ and thus converges absolutely for $|x|<R$.

\begin{theorem}\label{NN} Let $X\in\MM 1$ and let
$\rad X$ denote the radius of convergence  of $P_X(s)=G_X(1-s)=\sum_{k=0}^\infty s^kp_k$, where $p_k=\PP{X=k}$. Then $R_X\ge 1$ (possibly infinite) and the following holds:
\begin{enumerate}
\item If $(1+R_X)^{-1}<\al<1$ ($0$ on the left when $R_X=\infty$) then $\GG{X}(s/\al)$ is well defined for $s\in[0,1]$ and
\begin{align}\label{bj}
\ba j\equiv\sum_{k=j}^{\infty}{k\choose j}\al^{-k}(\al-1)^{k-j} p_k
\end{align}
converges absolutely with $\sum_{j=0}^\infty \ba j=1$. Moreover, if $X\in\MM\al$ with $X=\al\cc Z$, $Z\in\MM 1$ then necessarily $\ba j\ge 0$ for all $j\ge 0$ and
$\PP{Z=j}=\ba j$.
\item If $R_X>1$, $2(1+R_X)^{-1}<\alpha<1$ and $\ba j\geq 0$ for all $j\in\sqNN$ then $X\in\MM\al$ with $X=\al\cc Z$, $Z\in\MM 1$ and
$\PP{Z=j}=\ba j$.
\end{enumerate}
\end{theorem}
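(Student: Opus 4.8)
The plan is to read $\ba j$ as a Taylor coefficient of $\GEN X$ re-expanded about a shifted point, and to recognise that the two assertions are the two directions of inverting the thinning relation $\GG X(s)=\GG Z(\al s)$. Since $\sum_kp_k=1$, the series $\GEN X(s)=\sum_kp_ks^k$ converges at $s=1$, whence $\rad X\ge 1$. First I would record the identity
\begin{equation}
\ba j=\sum_{k=j}^\infty\binom kj\al^{-k}(\al-1)^{k-j}p_k=\frac{\al^{-j}}{j!}\,\GEN X^{(j)}(1-\al^{-1}),
\end{equation}
obtained by factoring out $\al^{-j}$ and writing $(\al-1)^{k-j}\al^{-(k-j)}=(1-\al^{-1})^{k-j}$. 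Under the hypothesis of~(i) one has $|1-\al^{-1}|=\al^{-1}-1<\rad X$ (exactly $\al>(1+\rad X)^{-1}$), so $1-\al^{-1}$ lies strictly inside the disc of convergence of $\GEN X$; since $\GEN X^{(j)}$ has the same radius of convergence, the series defining $\ba j$ converges absolutely. The companion bound $|1-s/\al|\le\rad X$ for $s\in[0,1]$ (with equality at most at $s=0$, where $\GG X(0)=\GEN X(1)=1$ in any case) shows that $\GG X(s/\al)=\GEN X(1-s/\al)$ is well defined on $[0,1]$.

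To get $\sum_j\ba j=1$ I would set $Q(s)=\sum_j\ba j\,s^j$, which by the identity above is the Taylor expansion of $\GEN X$ about $1-\al^{-1}$, so that $Q(s)=\GEN X(1-\al^{-1}+s\al^{-1})=\GG X((1-s)/\al)$ wherever it converges. Interchanging summation formally,
\begin{equation}
\sum_{j=0}^\infty\ba j=\sum_{k=0}^\infty\al^{-k}p_k\sum_{j=0}^k\binom kj(\al-1)^{k-j}=\sum_{k=0}^\infty\al^{-k}p_k\,\al^k=\sum_{k=0}^\infty p_k=1,
\end{equation}
where $\sum_{j=0}^k\binom kj(\al-1)^{k-j}=\al^k$ by the binomial theorem. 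This rearrangement is legitimate once $\{\ba j\}$ is absolutely summable, which is ensured by $\sum_k((2-\al)/\al)^kp_k<\infty$, i.e. by $(2-\al)/\al<\rad X$; failing that, since $Q(s)=\GG X((1-s)/\al)\to\GG X(0)=1$ as $s\uparrow1$, Abel's theorem pins the value at $1$ as soon as the series converges at $s=1$. For the ``moreover'' clause, if $X\sim\al\cc Z$ with $Z\in\MM 1$ then $\GG Z(s)=\GG X(s/\al)$, so $\GG Z^{(j)}(1)=\al^{-j}\GG X^{(j)}(\al^{-1})$; combined with $j!\,\pp{Z=j}=(-1)^j\GG Z^{(j)}(1)$ and $\GG X^{(j)}(t)=(-1)^j\GEN X^{(j)}(1-t)$ this gives
\begin{equation}
\pp{Z=j}=\frac{(-1)^j}{j!}\al^{-j}\GG X^{(j)}(\al^{-1})=\frac{\al^{-j}}{j!}\GEN X^{(j)}(1-\al^{-1})=\ba j\ge 0,
\end{equation}
where $\GG X^{(j)}(\al^{-1})$ exists because $\al^{-1}$ lies in the domain of $\GG X$ by~(i); here $\sum_j\ba j=1$ is automatic since $Z$ is proper.

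For part~(ii) the stronger hypothesis $\al>2(1+\rad X)^{-1}$ is precisely what makes $(2-\al)/\al<\rad X$, hence renders the full double series $\sum_{j,k}\binom kj\al^{-k}(1-\al)^{k-j}p_k$ absolutely convergent, its Tonelli value being $\sum_k((2-\al)/\al)^kp_k<\infty$. Thus $\sum_j\ba j=1$ holds rigorously, and with the standing assumption $\ba j\ge 0$ the sequence $\{\ba j\}$ is a bona fide distribution; I define $Z\in\MM 1$ by $\pp{Z=j}=\ba j$. It then remains to check $\al\cc Z\sim X$, which I would do by the direct computation
\begin{equation}
\pp{\al\cc Z=n}=\al^n\sum_{m\ge n}\al^{-m}p_m\sum_{k=n}^m\binom kn\binom mk(1-\al)^{k-n}(\al-1)^{m-k},
\end{equation}
rearranged legitimately by absolute convergence; the identity $\binom mk\binom kn=\binom mn\binom{m-n}{k-n}$ and the binomial theorem collapse the inner sum to $\binom mn[(1-\al)+(\al-1)]^{m-n}=\binom mn\,0^{\,m-n}$, which vanishes unless $m=n$, leaving $\pp{\al\cc Z=n}=\al^n\al^{-n}p_n=p_n$. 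Hence $X\sim\al\cc Z$ with $\pp{Z=j}=\ba j$.

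The only step that truly requires care is $\sum_j\ba j=1$ under the weaker threshold of~(i): there $\{\ba j\}$ need not be absolutely (nor even conditionally) summable, the term-by-term interchange is not directly justified, and the Abel identity $\sum_j\ba j\,s^j=\GG X((1-s)/\al)$ supplies the correct value $1$ only once convergence at $s=1$ is known. I expect this to be the main obstacle; the remainder is bookkeeping with the two radius thresholds $(1+\rad X)^{-1}$, governing the individual coefficients $\ba j$, and $2(1+\rad X)^{-1}$, governing the full double sum.
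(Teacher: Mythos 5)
Your proposal is correct where the paper's proof is, and for part~(i) together with its ``moreover'' clause it follows essentially the same route: the identification $\ba j=\al^{-j}\GEN X^{(j)}(1-\al^{-1})/j!$, absolute convergence of the series over $k$ from $\al^{-1}-1<\rad X$, and $\pp{Z=j}=\frac{(-1/\al)^j}{j!}\GG X^{(j)}(1/\al)=\ba j$ obtained by differentiating $\GG Z(\al s)=\GG X(s)$ at $s=1/\al$. In part~(ii) you take a genuinely different, though equivalent, final step: the paper interchanges summation in $\GG X(s/\al)=\sum_k(1-s/\al)^kp_k$ for all $s\in[0,1]$ (the threshold $\al>2(1+\rad X)^{-1}$ is exactly what makes $|2-\al-s|/\al<\rad X$ there) and reads off $\GG X(s/\al)=\sum_j(1-s)^j\ba j$ as the \apgf of $Z$, whereas you define $Z$ by $\pp{Z=j}=\ba j$ and verify $\al\cc Z\sim X$ directly via $\binom mk\binom kn=\binom mn\binom{m-n}{k-n}$ and the collapse of the inner sum to $\binom mn0^{m-n}$. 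Both are fine; your interchange requires $2(1-\al)/\al<\rad X$, which indeed follows from $\al>2(1+\rad X)^{-1}$. The paper's version has the small advantage of exhibiting the \apgf identity $\GG X(s/\al)=\sum_j(1-s)^j\ba j$ on all of $[0,1]$, which is reused elsewhere.

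The step you single out as the main obstacle, namely $\sum_j\ba j=1$ under the weaker threshold of~(i), is a real gap --- and the paper does not close it either: its proof of~(i) establishes only the absolute convergence of each individual $\ba j$ and the ``moreover'' clause, while the identity $\sum_j\ba j=1$ is actually derived only under the hypotheses of~(ii). Your suspicion that it can fail is justified. Take $p_k$ proportional to $R^{-k}$ for $k$ even and $p_k=0$ for $k$ odd, so that $\GEN X(s)=C/(1-(s/R)^2)$ has singularities at $\pm R$ and $\rad X=R$. The Taylor series of $\GEN X$ about $s_0=1-\al^{-1}$ then has radius exactly $R-(\al^{-1}-1)$, so $\limsup_j|\ba j|^{1/j}=\al^{-1}/(R+1-\al^{-1})$, which exceeds $1$ precisely when $\al<2(1+R)^{-1}$; for such $\al$ the terms $\ba j$ are unbounded and $\sum_j\ba j$ diverges. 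So the unconditional claim in~(i) should be understood as holding under the hypothesis of~(ii), or in the ``moreover'' case $X\in\MM\al$, where it is automatic because $\ba j=\pp{Z=j}$. You were right not to pretend this step goes through; everything else in your write-up is sound.
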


\begin{remark}\label{RNN}
Note that since $R_X\ge 1$, then $(1+R_X)^{-1}\le 1/2$ and if $R_X>1$ then $2(1+R_X)^{-1}<1$. In particular, the first part of Theorem~\ref{NN} is valid for any $\alpha\in(1/2,1)$. Also note that the first part also implies that when $(1+R_X)^{-1}<\alpha<1$ and there exists $j\ge 0$ such that $p_j^*(\alpha)<0$, then there is no $Z\in \MM 1$ such that $X\sim \alpha\circ Z$. In particular, if for any $\alpha\in (1-\epsilon,1)$, where $0<\epsilon\le 1/2$, there is some $j$ for which $p_j^*(\alpha)<0$ then necessarily $X\in \MS 1$. 
\end{remark}

\begin{proof}
\begin{enumerate}
\item $\GG{X}(s/\al)$ is well defined for $\al(1-\rad X)<s<\al(1+\rad X)$. Since $\al>(1+\rad X)^{-1}$ it follows that
$[0,1]\subset [\al(1-\rad X),\al(1+\rad X)]$. Since $(1-\alpha)/\alpha<R_X$, the series \eqref{bj} converges absolutely.  If $X=\al\cc Z$ then
\begin{align}
\GG{Z}(\al s)=\GG{X}(s),\quad |1-s|<\rad X.\label{f}
\end{align}
Since $\al>(1+\rad X)^{-1}$ it follows that \eqref{f} holds in particular for $s=1/\alpha$ and
\begin{align*}
\PP{Z=j}=\frac{(-1)^j}{j!}\GG{Z}\di j(1)&=\frac{(-1/\al)^j}{j!}\GG{X}\di j(1/\al)
=\ba j.\qedhere
\end{align*}
\item Since $\al>(1+\rad X)^{-1}$ the function
\begin{align}
f(s)&\equiv\GG X(s/\al)=\sum_{k=0}^\infty \kla{1-s/\al}^k p_k\nonumber\\
&=\sum_{k=0}^\infty\sum_{j=0}^{k} \left(\frac{1-\alpha}{\al}\right)^{k} p_k {k\choose j}\kla{\frac{1-s}{1-\al}}^j(-1)^{k-j}\label{sx}
\end{align}
is well defined for $s\in[0,1]$ and $f(0)=1$. The sum of the absolute values is
\begin{align*}
\sum_{k=0}^\infty \left(\frac{1-\alpha}{\al}\right)^{k} p_k \sum_{j=0}^{k}{k\choose j}\kla{\frac{1-s}{1-\al}}^j
&=\sum_{k=0}^\infty \left(\frac{1-\alpha}{\alpha}\right)^k\left(1+\frac{1-s}{1-\alpha}\right)^kp_k \\
&=\sum_{k=0}^\infty \left(\frac{2-\alpha-s}{\alpha}\right)^kp_k
\end{align*}
which converges provided that $|2-\alpha-s|/\alpha<R_X$. That is, when
\[-\alpha(R_X-1)-1<1-s<\alpha(R_X+1)-1\]
where the left hand side is negative and, since $\alpha\ge 2/(1+R_X)$, the right hand side is $\ge 1$.
Therefore for all $s\in[0,1]$ the sum $f(s)$ is absolutely convergent, so that we can interchange the order of summation in (\ref{sx}) and obtain, after some rearrangements,
\begin{align*}
f(s)=\GG X(s/\al)=\sum_{j=0}^\infty(1-s)^j\ba j.
\end{align*}
If $\ba j\geq 0$ for $j\in\sqNN$ then, since $\sum_{j=0}^\infty p_j^*(\alpha)=1$, $f(s)$ is the \apgf of a random variable $Z\in\MM 1$ with $\PP{Z=j}=\ba j$ and
 $X=\al\cc Z$.
\end{enumerate}
\end{proof}

We have seen that if the support of a distribution in $\MM 1$ contains a hole then $X\in\MS 1$. In Theorem~\ref{thmstar} in the next section we will show that for distributions with bounded support the converse is also true. The following example demonstrates that this converse fails when the support is unbounded. That is, there are distributions in $\MS 1$ with unbounded supports that do not contain holes.

\begin{example}

For $0<d<c<1$ let $p_k=P(X=k)$ be defined as follows.
\begin{align*}
p_k=\frac{1}{A}\cdot \begin{cases}
c^k&k\text{ even},\\
d^k&k\text{ odd}
\end{cases}
\end{align*}
where 
$A=(1-c^2)^{-1}+d(1-d^2)^{-1}$.
The support of this distribution is unbounded and does not contain holes. Clearly,
\begin{align}
2AP_X(s)&=2\left(\frac{1}{1-(cs)^2}+\frac{ds}{1-(ds)^2}\right)=\frac{1}{1-cs}+\frac{1}{1+cs}+\frac{1}{1-ds}-\frac{1}{1+ds}
\end{align}
so that for odd $n$ we have
\begin{equation}
\frac{2A}{n!}P_X^{(n)}(s)=\frac{c^{-1}}{(c^{-1}-s)^{n+1}}-\frac{c^{-1}}{(c^{-1}+s)^{n+1}}+\frac{d^{-1}}{(d^{-1}-s)^{n+1}}+\frac{d^{-1}}{(d^{-1}+s)^{n+1}}\ .
\end{equation}
Since $d<c$ then for $s\in(-1,0)$ we have $c^{-1}+s<\max(c^{-1}-s,d^{-1}-s,d^{-1}+s)$ from which it follows that
\begin{equation}
(c^{-1}+s)^{n+1}\cdot \frac{2A}{n!}p_X^{(n)}(s)\to -c^{-1}<0
\end{equation}
when $n$ is odd and $n\to\infty$. Therefore $P_X(s)$ is not absolutely monotone on $(-\epsilon,0)$ for any $\epsilon\in(0,1)$. From Corollary~\ref{CorExt}, this is equivalent to $X\in\MS 1$.

\end{example}

\section{Distributions with bounded support}\label{DBS}

We now consider random variables $X$ with support in the finite set $\II n\equiv\{0,1,2,\ldots,n\}$. Let us write
$\MMF \al=\{X\in\MM\al: \supp{X}\subseteq \II n\}$ and $\MSF \al=\{X\in\MS\al: \supp{X}\subseteq \II n\}$ for the respective subsets of $\MM\al$ and $\MS\al$. Let as before for $\al\in(0,1]$
\begin{align}\label{bj2}
\ba j=\sum_{k=j}^{n}{k\choose j}\al^{-k}
(\al-1)^{k-j} p_k,
\end{align}
where $p_k=\PP{X=k}$. We note that here $\GG{X}(s)$, being a polynomial, is well defined and finite for all $s\in\mathbb{R}$.

\begin{theorem}\label{UH2} For $n\ge 1$ let $X\in\MMF 1$ with $p_n>0$. Then $\rho(X)\ge p_n^{1/n}>0$ and for $\alpha\in(0,1]$, $X\in\MMF \alpha$ if and only if $p_j^\star(\alpha)\ge 0$ for all $j\in \II {n-1}$, in which case $X\sim \alpha\circ Z$ with $P(Z=j)=p_j^\star(\alpha)$ for $j\in\II n$.
\end{theorem}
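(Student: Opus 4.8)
The plan is to read off the ``if and only if'' statement as a special case of Theorem~\ref{NN}, exploiting that a bounded support forces $R_X=\infty$, and then to extract the lower bound $\rho(X)\ge p_n^{1/n}$ from the leading coefficient in \eqref{bj2}.

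First I would note that since $\supp X\subseteq\II n$, the series $P_X(s)=\sum_{k=0}^n s^k p_k$ is a polynomial, so $R_X=\infty$ and hence $(1+R_X)^{-1}=2(1+R_X)^{-1}=0$. Consequently, for every $\al\in(0,1)$ both hypotheses on $\al$ in parts (i) and (ii) of Theorem~\ref{NN} hold automatically. Part (i) then says that the (now finite) coefficients $p_j^\star(\al)$ of \eqref{bj2} sum to $1$ and that, whenever $X\in\MM\al$, they are all nonnegative with $P(Z=j)=p_j^\star(\al)$; part (ii) gives the converse, that $p_j^\star(\al)\ge 0$ for all $j$ implies $X\sim\al\circ Z$ with this same $Z$. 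The degenerate case $\al=1$ is immediate since $p_j^\star(1)=p_j$ and $X\sim 1\circ X$. Because $\supp X\subseteq\II n$ already, $X\in\MM\al$ is the same as $X\in\MMF\al$.

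Next I would carry out the two bookkeeping reductions implicit in the statement. From \eqref{bj2} the sum defining $p_j^\star(\al)$ is empty for $j>n$, so $p_j^\star(\al)=0$ there; hence the $Z$ produced above has $\supp Z\subseteq\II n$, giving $Z\in\MMF 1$ and $X\in\MMF\al$, and the law of $Z$ is completely specified by $P(Z=j)=p_j^\star(\al)$, $j\in\II n$. For $j=n$ the only surviving term is $k=n$, so $p_n^\star(\al)=\al^{-n}p_n$, which is strictly positive because $p_n>0$ and $\al>0$; thus the sign condition is automatic at $j=n$ (and vacuous for $j>n$), leaving exactly the requirement that $p_j^\star(\al)\ge 0$ for $j\in\II{n-1}$, as asserted.

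Finally, for the bound, fix any $\al\in(0,1]$ with $X\in\MM\al$. For $\al=1$ there is nothing to prove, and for $\al\in(0,1)$ part (i) of Theorem~\ref{NN} yields $p_n^\star(\al)\ge 0$ together with $\sum_{j}p_j^\star(\al)=1$, so $p_n^\star(\al)=\al^{-n}p_n\le 1$, i.e.\ $\al\ge p_n^{1/n}$. Taking the infimum over all admissible $\al$ gives $\rho(X)\ge p_n^{1/n}$, which is positive since $p_n>0$. I do not expect a genuine obstacle here: once Theorem~\ref{NN} is invoked, the whole argument reduces to the observation $R_X=\infty$ plus the elementary evaluation $p_n^\star(\al)=\al^{-n}p_n$ of the leading coefficient, so the only care needed is in checking that bounded support makes the $\al$-restrictions of Theorem~\ref{NN} disappear and in isolating the $j=n$ term.
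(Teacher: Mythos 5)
Your proposal is correct and rests on exactly the same computation as the paper's proof: the binomial rearrangement $G_X(s/\al)=\sum_{j=0}^n(1-s)^j\ba j$ with $\sum_j\ba j=1$, plus the observation $\ba n=\al^{-n}p_n$ for the lower bound on $\rho(X)$. The only difference is presentational — you import the identity from Theorem~\ref{NN} via $R_X=\infty$, while the paper simply redoes the (now finite) rearrangement directly for the polynomial $G_X$; your handling of the bound via the infimum over admissible $\al$ even sidesteps the paper's implicit appeal to $X\in\MM{\rho(X)}$.
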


\begin{proof} First observe that for every $\alpha\in(0,1]$ we have that $\ba n=p_n/\alpha^n$ and that $p_n^*(\rho(X))\le 1$. This implies that $\ba n> 0$ for all $\alpha\in(0,1]$ and that $\rho(X)\ge p_n^{1/n}$.  Now, for every $\alpha\in(0,1]$ and $s\in\mathbb{R}$, we have,
\begin{align}
\GG{X}(s/\al)&=\sum_{k=0}^n (1-s/\al)^kp_k=\sum_{k=0}^n (1-s)^kp_k\sum_{j=0}^{k}{k\choose j}\al^{-k}
(1-s)^{j-k}(\al-1)^{k-j}\nonumber
\\&=\sum_{j=0}^n(1-s)^j \sum_{k=j}^{n}{k\choose j}\al^{-k}
(\al-1)^{k-j} p_k=
\sum_{j=0}^n (1-s)^j\ba j\,,
\end{align}
which, setting $s=0$, gives $\sum_{j=0}^np_j^\star(\alpha)=\GG{X}(0)=1$. Therefore, $\GG{X}(s/\alpha)$ is the \apgf of some $Z\in\MMF 1$ ({\em i.e.,} $X\sim\al\cc Z$, necessarily with $P(Z=j)=p_j^\star(\alpha)$ for $j\in\II n$). 

\end{proof}

\begin{remark}\label{rem5}
We mention that since $\MMF {\alpha_1}\subset \MMF {\alpha_2}$ for $0<\alpha_1<\alpha_2\le 1$, then it follows that $\ba i\ge 0$ for all $i\in\II n$ and $\alpha\in[\rho(X),1]$. For all $\alpha\in(0,\rho(X))$ there is at least one $i\in\II {n-1}$ for which $\ba i<0$. This will be illustrated in Remark~\ref{rem8} following Example~\ref{ex02}.
\end{remark}

\begin{theorem} \label{hole} Let $X\in\MM 1$ with bounded support. Then $X\in\MS 1$ if and only if its support contains a hole.
\end{theorem}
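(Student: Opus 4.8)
The statement is an ``if and only if,'' and one direction is already available: Theorem~\ref{thmstar} shows that any $X\in\MM1$ whose support contains a hole lies in $\MS1$, and this applies verbatim in the bounded-support case. So the plan is to concentrate entirely on the converse, which I would handle in contrapositive form: if $X$ has bounded support and its support contains \emph{no} hole, then $\rho(X)<1$, i.e.\ $X\notin\MS1$.

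The first step is to unwind what ``no hole'' means for a bounded support. If $X$ is not the constant zero, then the absence of holes forces $P(X=0)>0$ (otherwise the least positive point of the support would produce a hole) and, propagating upward, forces the support to be a full initial segment $\{0,1,\dots,n\}$ with $p_k=P(X=k)>0$ for every $k\in\II n$, some $n\ge1$. The constant-zero case is disposed of immediately, since there $\rho(X)=0<1$. Hence I may assume $p_k>0$ for all $k\in\II n$.

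The heart of the argument is then a continuity/perturbation observation resting on Theorem~\ref{UH2}. By that theorem, $X\in\MMF\alpha$ if and only if $\ba j\ge0$ for all $j\in\II{n-1}$, the leading coefficient $\ba n=p_n/\alpha^n$ being automatically positive. Each $\ba j$, written out as in \eqref{bj2}, is a finite sum of terms $\binom kj\alpha^{-k}(\alpha-1)^{k-j}p_k$ and is therefore continuous in $\alpha$ on $(0,\infty)$. Evaluating at $\alpha=1$, the factor $(\alpha-1)^{k-j}$ vanishes for $k>j$ and equals $1$ for $k=j$, so $p_j^\star(1)=p_j>0$ for every $j\in\II{n-1}$. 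Since there are only finitely many indices $j$, continuity furnishes a common one-sided neighbourhood of $\alpha=1$ on which all the $\ba j$ stay positive; choosing any $\alpha<1$ in this neighbourhood gives $\ba j\ge0$ for all $j\in\II{n-1}$, whence $X\in\MMF\alpha\subseteq\MM\alpha$ with $\alpha<1$. Thus $\rho(X)\le\alpha<1$ and $X\notin\MS1$, completing the converse.

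I expect no genuine obstacle here, only two points that must be handled with care. The first is translating ``no hole'' into the strict positivity $p_k>0$ along a full initial segment, and separating off the trivial constant-zero case. The second is the clean evaluation $p_j^\star(1)=p_j$, which relies entirely on the degenerate factor $0^{k-j}$ killing every term except the diagonal one. The structural fact that makes the whole scheme go through—that finiteness of the support reduces membership in $\MM\alpha$ to finitely many nonnegativity constraints, each continuous in $\alpha$—is exactly Theorem~\ref{UH2}; once it is invoked, the perturbation away from $\alpha=1$ is routine.
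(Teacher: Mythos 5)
Your proof is correct and is essentially the paper's own argument: both directions are handled the same way, with the converse resting on the observation that finitely many strictly positive continuous quantities stay positive under a small perturbation of the parameter away from its boundary value. The only cosmetic difference is that the paper perturbs $s$ away from $1$ in the derivatives $(-1)^kG_X^{(k)}(s)$ and closes via Corollary~\ref{CorExt}, whereas you perturb $\alpha$ away from $1$ in the coefficients $p_j^\star(\alpha)$ and close via Theorem~\ref{UH2}; since $p_j^\star(\alpha)=\frac{(-1/\alpha)^j}{j!}\,G_X^{(j)}(1/\alpha)$, these are the same quantities up to positive factors and the substitution $s=1/\alpha$.
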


\begin{proof}
We have already seen (Lemma~\ref{suppe}) that if the support of $X$ has a hole then it must be in $\MS 1$. For the converse, assume that the support of $X$ has no holes. Therefore, either $P(X=0)=1$ and then $X\in\MS 0$ (hence, not in $\MS 1$), or we let $n=\max\{i|\,P(X=i)>0\}$. Since $G_X(s)$ is a polynomial of degree $n$, then for all $k>n$ we have that $G^{(k)}(s)=0$ for all real $s$. For $k\le n$ we have that $(-1)^kG^{(k)}(1)=P(X=k)>0$ so that by the continuity of $G^{(k)}(s)$ there must be some $c_k>0$ such that $(-1)^kG^{(k)}(s)>0$ for all $s\in(1,c_k)$. Taking $c= \min\{c_k|\,k=0,\ldots,n\}>1$ gives that $G_X$ is completely monotone on $(0,c)$ and by Corollary~\ref{CorExt} we have that $X\not\in\MS 1$.
\end{proof}

The following implies that $\rho(\cdot)$ is continuous on $\MMF 1\setminus \DSYM_1(n-1)$ (but not on $\MMF 1$). Although it is possible that this may be concluded from the continuity property of (possibly complex) roots of polymials of the form $x^n+\sum_{i=0}^{n-1}c_ix^i$ in $c_0,\ldots,c_{n-1}$ (and the continuity of the maximum function), we prefer to give a simple self contained derivation demonstrating how it is in fact a by product of Theorem~\ref{hole}.

\begin{theorem}\label{limrho}
Assume that $X_k\in \MMF 1$ for $k\ge 1$ and that $X_k\dto X$ with $P(X=n)>0$. Then $\rho(X_k)\to \rho(X)>0$ as $k\to\infty$.
\end{theorem}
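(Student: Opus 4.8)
The plan is to prove the two one-sided estimates $\liminf_k\rho(X_k)\ge\rho(X)$ and $\limsup_k\rho(X_k)\le\rho(X)$, from which $\rho(X_k)\to\rho(X)$ follows; the positivity of the limit is immediate, since $P(X=n)>0$ gives $\rho(X)\ge p_n^{1/n}>0$ by Theorem~\ref{UH2}. First I would record the facts that drive everything. As the $X_k$ and $X$ are supported in the finite set $\II n$, convergence in distribution is the same as pointwise convergence of mass functions, $p_j^{(k)}\to p_j$ for each $j$; moreover $p_n^{(k)}\to p_n>0$, so $p_n^{(k)}>0$ for all large $k$, and Theorem~\ref{UH2} applies to $X$ and (eventually) to every $X_k$. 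Writing $p_j^\star(\al;q)$ for the coefficient~\eqref{bj2} computed from a mass vector $q$ on $\II n$, I note that $(\al,q)\mapsto p_j^\star(\al;q)$ is jointly continuous for $\al\in(0,1]$ (it is linear in $q$ and continuous in $\al$ away from $0$), so $m(\al,q)\equiv\min_{0\le j\le n}p_j^\star(\al;q)$ is jointly continuous as well. By Theorem~\ref{UH2} and Remark~\ref{rem5}, for a distribution in $\MMF 1$ with positive top weight the set $\{\al\in(0,1]:m(\al,q)\ge0\}$ is exactly $[\rho,1]$, with $m(\cdot,q)<0$ on $(0,\rho)$, where $\rho$ is the $\rho$-value of that distribution.

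For lower semicontinuity I would argue by contradiction: if $\rho(X_{k_i})\to L<\rho(X)$ along a subsequence, pick $\beta\in(L,\rho(X))$; for large $i$ the inequality $\rho(X_{k_i})<\beta$ forces $X_{k_i}\in\MM\beta$ (nesting, via Theorem~\ref{udi}), whence $m(\beta,p^{(k_i)})\ge0$. Letting $i\to\infty$ and using joint continuity gives $m(\beta,p)\ge0$, contradicting $\beta<\rho(X)$.

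The upper estimate is the delicate one. The main obstacle is that $m(\cdot,p)$ could in principle vanish tangentially at an interior point of $(\rho(X),1)$, and such a tangency could be driven strictly negative by the perturbation $p^{(k)}$, which would force $\rho(X_k)$ above $\rho(X)$. The key step that removes this danger is to show $m(\al,p)>0$ strictly for every $\al\in(\rho(X),1)$. To see this, write $X\sim\rho(X)\cc Z$ with $Z\in\MS 1$ (Theorem~\ref{thmstar}), and note $P(Z=n)>0$, since thinning preserves the top of the support (Lemma~\ref{suppe}). For $\al\in(\rho(X),1)$ we have $X\sim\al\cc\big((\rho(X)/\al)\cc Z\big)$, so by the uniqueness of the unthinning in Theorem~\ref{UH2} the coefficient $p_j^\star(\al;p)$ equals $P\big((\rho(X)/\al)\cc Z=j\big)$; since $0<\rho(X)/\al<1$ and $P(Z=n)>0$, the spreading estimate in the proof of Lemma~\ref{suppe} makes this probability strictly positive for every $j\in\{0,\dots,n\}$, giving $m(\al,p)>0$.

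With strict positivity in hand the upper estimate is routine. If $\rho(X)=1$ it is trivial; otherwise, fixing $\beta\in(\rho(X),1)$ we have $m(\beta,p)>0$, hence $m(\beta,p^{(k)})>0$ for large $k$ by continuity, so $X_k\in\MM\beta$ and $\rho(X_k)\le\beta$. Letting $\beta\downarrow\rho(X)$ yields $\limsup_k\rho(X_k)\le\rho(X)$, and combining this with the lower estimate proves $\rho(X_k)\to\rho(X)$, while $\rho(X)\ge p_n^{1/n}>0$.
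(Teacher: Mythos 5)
Your proof is correct, but it follows a genuinely different route from the paper's. The paper argues by compactness: it extracts a convergent subsequence of the pairs $(\rho(X_k),Z_k)$ with $X_k\sim\rho(X_k)\circ Z_k$ and $Z_k\in\MSF 1$, passes to the limit in the thinning relation, and uses the holes characterization (Theorem~\ref{hole}) to show that the limiting $Z$, having positive mass at $n$, stays in $\MSF 1$, so that Theorem~\ref{thmstar} identifies the limit of $\rho(X_k)$ as $\rho(X)$. You instead work directly with the finite linear-algebraic characterization of Theorem~\ref{UH2}, exploiting joint continuity of $(\alpha,q)\mapsto p_j^\star(\alpha;q)$ and splitting the claim into two semicontinuity estimates. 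The lower estimate is soft; your real contribution is the observation that $p_j^\star(\alpha)>0$ \emph{strictly} for every $j\in\II n$ whenever $\alpha\in(\rho(X),1)$, obtained by identifying $p_j^\star(\alpha)$ with $P\bigl((\rho(X)/\alpha)\circ Z=j\bigr)$ and invoking the spreading bound from Lemma~\ref{suppe}; this is exactly what rules out a tangential zero of $\min_j p_j^\star(\cdot)$ above $\rho(X)$ and makes the upper estimate go through. The paper's argument is shorter and yields as a by-product that $\MSF 1$ is closed among distributions with positive top weight; yours avoids subsequence extraction entirely, is more quantitative, and the strict positivity of the unthinned weights above $\rho(X)$ is a small fact of independent interest that sharpens Remark~\ref{rem5}. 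Both arguments use $P(X=n)>0$ in an essential way, as they must in view of the counterexample following the theorem.
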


\begin{proof}
First assume that $\rho(X_k)=1$ for all $k\ge 1$. Since $P(X_k=n)\to P(X=n)>0$ as $k\to\infty$ then there exists some $K$ such that $P(X_k=n)>0$ for all $k\ge K$. By Theorem~\ref{hole} for each $k\ge K$ the support of $X_k$ contains a hole (which cannot be $n$) and thus there exists some $i\in\{0,\ldots,n-1\}$ and a subsequence $X_{k_m}$ such that $P(X_{k_m}=i)=0$. Therefore $P(X=i)=0$ and since $P(X=n)>0$ then the support of $X$ contains a hole and thus $\rho(X)=1$.

In general, let $\alpha_k=\rho(X_k)$ and let $Z_k\in\MSF 1$ be such that $X_k\sim\alpha_k\circ Z_k$. It suffices to show that if $\alpha_k$ converges then it necessarily converges to $\rho(X)$. The reason is that this would imply that any convergent subsequence of $\alpha_k$ necessarily converges to $\rho(X)$ which in turn implies that $\alpha_k\to \rho(X)$. Therefore, we assume that $\alpha_k\to\alpha$ as $k\to\infty$. Take $Z_k\in\MSF 1$ such that $X_k\sim\alpha_k\circ Z_k$. There exists a subsequence $Z_{k_m}$ that converges in distribution to some $Z$. Now,
\begin{equation}
\alpha^n P(Z=n)=\lim_{m\to\infty}\alpha_{k_m}^nP(Z_{k_m}=n)=\lim_{m\to\infty}P(X_{k_m}=n)=P(X=n)>0
\end{equation}
and thus $\alpha>0$ and $P(Z=n)>0$. By the first part of the proof we have that $Z\in \MSF 1$ and since $X_{k_m}\sim \alpha_{k_m}\circ Z_{k_m}\dto  \alpha\circ Z$ we necessarily have that $X\sim \alpha\circ Z$ and thus $\rho(X)=\alpha>0$.
\end{proof}

\begin{remark}
We note that without the condition $P(X=n)>0$, Theorem~\ref{limrho} would no longer be always valid. For example, for $n\ge 2$ take $p_0,\ldots,p_{n-2}>0$ with $\sum_{i=0}^{n-2}p_i=1$ and assume that $P(X_k=i)=(1-k^{-1})p_k$ for $0\le i\le n-2$, $P(X_k=n-1)=0$ and $P(X_k=n)=k^{-1}$. For each $k\ge 1$, $\rho(X_k)=1$ since the support of $X_k$ contains a hole. However, as $k\to\infty$ we have that $X_k\dto X$ where the support of $X$ is $\II {n-2}$ and contains no holes and thus $\rho(X)<1$. An extreme case is when $n=2$ and then $\rho(X_k)=1$ for $k\ge 1$ but $\rho(X)=0$.
\end{remark}

\begin{corollary}\label{rho}
There exist independent $X,Y\in M_1$ such that $\rho(X+Y)$ is strictly smaller than both $\rho(X)$ and $\rho(Y)$.
\end{corollary}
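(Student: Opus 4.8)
The plan is to reduce the whole statement to Theorem~\ref{hole}, which says that a bounded-support distribution belongs to $\MS 1$ (equivalently, has $\rho=1$) precisely when its support contains a hole. So I would search for two independent bounded-support variables $X,Y$ whose supports each contain a hole---forcing $\rho(X)=\rho(Y)=1$---but whose convolution has a gap-free support. For such a pair, $X+Y$ again has bounded support and no hole, so Theorem~\ref{hole} gives $X+Y\notin\MS 1$; since $\rho(X+Y)\in[0,1]$ this yields $\rho(X+Y)<1=\rho(X)=\rho(Y)$, which is in fact stronger than the asserted strict inequality below both.

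The problem thus becomes purely combinatorial: exhibit finite sets $A,B\subseteq\sqNN$ with $0\in A$ and $0\in B$ (so that $\min(A+B)=0$), neither an initial segment $\{0,1,\ldots,\max\}$ (so that each contains a hole), whose sumset $A+B$ is the full interval $\{0,1,\ldots,\max A+\max B\}$. I would take $A=\{0,2\}$ and $B=\{0,1,3,4\}$. Then $A+B=\{0,1,3,4\}\cup\{2,3,5,6\}=\{0,1,2,3,4,5,6\}$ is gap-free, while $A$ has a hole at $1$ and $B$ has a hole at $2$. Concretely, let $X$ be uniform on $\{0,2\}$ and $Y$ uniform on $\{0,1,3,4\}$, independent; then $\supp{X}=A$, $\supp{Y}=B$, and $\supp{X+Y}=A+B=\{0,\ldots,6\}$ carries positive mass at every point.

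Assembling the pieces: $X$ and $Y$ have bounded support containing holes, so Theorem~\ref{hole} gives $X,Y\in\MS 1$, i.e.\ $\rho(X)=\rho(Y)=1$; and $X+Y$ has bounded support with no hole, so Theorem~\ref{hole} gives $X+Y\notin\MS 1$, whence $\rho(X+Y)<1$. The only non-routine step is locating the combinatorial example---finding supports with holes whose sumset fills an entire interval---since the naive attempts (e.g.\ both supports having smallest positive element $\ge 2$, which forces a hole at $1$ in the convolution, or two identical ``holed'' supports) reproduce holes in $X+Y$; once a suitable pair is in hand every verification is immediate. In particular this simultaneously refutes the tempting guess $\rho(X+Y)\ge\min(\rho(X),\rho(Y))$ recorded in the remark following Theorem~\ref{Th:properties}.
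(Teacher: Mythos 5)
Your proof is correct and takes essentially the same route as the paper: invoke Theorem~\ref{hole} for bounded-support variables whose supports contain holes but whose sumset is an unbroken interval, the paper's choice being $S=\{0,1,3,4\}$ for \emph{both} $X$ and $Y$, with $S+S=\{0,1,\ldots,8\}$ gap-free. (Incidentally, your parenthetical remark that two identical ``holed'' supports necessarily reproduce a hole in the convolution is refuted by the paper's own example, but this aside plays no role in your argument, which is valid as written.)
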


\begin{proof}
Suppose that $X,Y$ are independent and both have support $S=\{0,1,3,4\}$. Since $2\not\in S$ then $S$ contains a hole and thus $\rho(X)=\rho(Y)=1$. However the support of $X+Y$ is $S+S=\{i|\,0\le i\le 8\}$ which has no holes and thus $\rho(X+Y)<1$.
\end{proof}

\begin{remark}
If one is not happy with the example given in the proof of Corollary~\ref{rho} and is looking for an example with $\rho(X)\not=\rho(Y)$, then one may replace $X$ by $\alpha\circ X$. Since
$\alpha\circ X+Y\dto {X+Y}$ as $\alpha\uparrow 1$ and $P(X+Y=8)>0$, then Theorem~\ref{limrho} implies that $\rho(\alpha\circ X+Y)\to \rho(X+Y)<1$ as $\alpha\uparrow 1$. Therefore there exists an $\alpha\in (0,1)$ such that
\begin{equation}\label{eq:circXY}
\rho(\alpha\circ X+Y)<\alpha=\rho(\alpha\circ X)<1=\rho(Y)\ .
\end{equation}
For an example with 
\begin{equation}
\rho(X+Y)<\min(\rho(X),\rho(Y))<\max(\rho(X),\rho(Y))<1\,,
\end{equation} 
simply replace $X$ and $Y$ by $(\alpha\beta)\circ X$ and $\beta\circ Y$ where $\beta\in(0,1)$ and $\alpha$ is what we chose for (\ref{eq:circXY}).
\end{remark}

\section{Examples}\label{Examples}

\begin{example}\label{example1} We have already seen that a Bernoulli random variable $B$ with $\PP{B=1}=\alpha$ is a member of $\MS{\alpha}$. Since the constant $n$ is in $\MS 1$ a binomial random variable $\alpha\circ
n\sim\Cbinom n\al$ is also a member of $\MS \alpha$.
\end{example}

\begin{example}\label{ex02}
Assume that the support of $X$ is $\{0,1,2\}$ and that $p_i=P(X=i)>0$ for $i=0,1,2$. Then it is easy to check that
\begin{align}
\ba 0&=p_0-\left(\frac{1}{\alpha}-1\right)p_1+\left(\frac{1}{\alpha}-1\right)^2p_2\nonumber\\
\ba 1&=\frac{2p_2}{\alpha}\left(1+\frac{p_1}{2p_2}-\frac{1}{\alpha}\right) \\
\ba 2&=\frac{p_2}{\alpha^2}\ge 0\quad \forall\alpha\in(0,1] \,.\nonumber
\end{align}
The quadratic function $P_X(-x)=p_2x^2-p_1x+p_0$ is nonnegative when $\Delta=p_1^2-4p_0p_2\le 0$ which implies that $p_0^*(\alpha)\ge 0$ for $\alpha\in(0,1]$. Therefore, in this case the equation for $p_1^*(\alpha)$ implies that $\rho(X)=\frac{1}{1+\frac{p_1}{2p_2}}$.

When $\Delta>0$ the two (positive) roots of $P_X(-x)$ are $\frac{p_1\pm\sqrt{\Delta}}{2p_2}$. From this it follows that $p_0^*(\alpha)\ge 0$ when either
\begin{equation}\label{eq:ineq1}\frac{1}{1+\frac{p_1-\sqrt{\Delta}}{2p_2}}\le\alpha\le 1\end{equation} 
or when 
\begin{equation}\label{eq:ineq2}0<\alpha\le \frac{1}{1+\frac{p_1+\sqrt{\Delta}}{2p_2}}\,.\end{equation}
Evidently, (\ref{eq:ineq2}) is irrelevant since the right hand side is strictly less than $(1+\frac{p_1}{2p_2})^{-1}$ and for any such $\alpha$, $p_1^*(\alpha)<0$. Any $\alpha$ that satisfies (\ref{eq:ineq1}) is also larger than $(1+\frac{p_1}{2p_2})^{-1}$ and for such $\alpha$ we also have that $p_1^*(\alpha)\ge 0$. Summarizing, with $\Delta^+=\max(\Delta,0)$,  we have that
\begin{equation}\label{eq:rho3}
\rho(X)=\frac{1}{1+\frac{p_1-\sqrt{\Delta^+}}{2p_2}}=\left\{\begin{array}{ll}\frac{1}{1+\frac{p_1}{2p_2}}&\Delta< 0\\ \\
\frac{1}{1+\frac{p_1}{2p_2}}=\frac{1}{1+\frac{2p_0}{p_1}}&\Delta=0\\ \\
\frac{1}{1+\frac{2p_0}{p_1+\sqrt{\Delta}}} &\Delta > 0\,.
\end{array}\right.
\end{equation}

Therefore, if $X\sim\rho(X)\circ Z$, noting that $p_1^*(\rho(X))=0$ when $\Delta\le 0$ and that $p_0^*(\rho(X))=0$ when $\Delta\ge 0$ (recalling that $p_2^*(\alpha)>0$ for all $\alpha\in(0,1]$ and in particular for $\alpha=\rho(X)$), we have
\begin{equation}
\supp{Z}=\left\{\begin{array}{ll}
\{2\}& \Delta=0\\
\{1,2\}&\Delta>0\\
\{0,2\}&\Delta<0\,.
\end{array}\right.
\end{equation}
Not surprisingly, these are also all of the possible supports which contain at least one hole.

When $\Delta=0$, since $p_0^*(\rho(X))=p_1^*(\rho(X))=0$, we necessarily have that $p_2^*(\rho(X))=1$. Therefore,
\begin{itemize}
\item When $\Delta=0$, $X\sim\Cbinom{2}{\rho(X)}$ (with $P(Z=2)=1$).
\item When $\Delta>0$, $Z-1\sim\Cber{p_2^*(\rho(X))}$.
\item When $\Delta<0$, $Z/2\sim\Cber{p_2^*(\rho(X))}$.
\end{itemize}

When $p_i=1/3$ for $i=0,1,2$ we have that $\Delta=1/3-4(1/3)^2<0$ and thus $\Delta^+=0$ so that
\begin{equation}
\rho(X)=\frac{1}{1+\frac{p_1}{2p_2}}=\frac{1}{1+\frac{1/3}{2/3}}=\frac{1}{1+\frac{1}{2}}=\frac{2}{3}
\end{equation}
which is consistent with Example~\ref{DU} appearing later. In this case $Z/2\sim\Cber{3/4}$ since
\begin{equation}p_2^*(\rho(X))=\frac{p_2}{\rho(X)^2}=\frac{1/3}{(2/3)^2}=\frac{3}{4}
\end{equation}
Equivalently, $\frac{2}{3}\circ [2(\frac{3}{4}\circ 1)]$ has the uniform distribution on $\{0,1,2\}$.
\end{example}

\begin{remark}\label{rem8}
To illustrate Remark~\ref{rem5} assume that $\Delta>0$, set 
\begin{equation}
\alpha_1=\left(1+\frac{p_1+\sqrt{\Delta}}{2p_2}\right)^{-1}\ 
\alpha_2=\left(1+\frac{p_1}{2p_2}\right)^{-1}\ \alpha_3=\rho(X)=\left(1+\frac{p_1-\sqrt{\Delta}}{2p_2}\right)^{-1}\,.
\end{equation}
and recall that $\ba 2> 0$ for all $\alpha\in(0,1]$. Now,
\begin{enumerate}
\item For $\alpha\in [\alpha_3,1]$, $\ba 0,\ba 1\ge 0$.
\item  For $\alpha\in [\alpha_2,\alpha_3)$, $\ba 0<0\le \ba 1$.
\item For $\alpha\in(\alpha_1,\alpha_2)$, $\ba 0,\ba 1<0$.
\item For $\alpha\in(0,\alpha_1]$, $\ba 1<0\le \ba 0$.
\end{enumerate}
As observed in Remark~\ref{rem5}, we have that for $\alpha\in[\rho(X),1]$, $\ba i\ge 0$ for $i\in\II 2$ and for $\alpha\in(0,\rho(X))$ there is at least one $i\in \II 1$ such that $\ba i<0$.
\end{remark}

\begin{remark}
Denoting the right hand side of (\ref{eq:rho3}) by $r(p_0,p_1,p_2)$ and defining $r(1,0,0)=0$ it is easy to check that $r(\cdot)$, defined on the closed simplex 
\begin{equation}
S=\{(p_0,p_1,p_2)|\,p_0,p_1,p_2\ge 0,\,p_0+p_1+p_2=1\}\ ,
\end{equation} 
gives the correct formula for $\rho(X)$ also for the cases where at least one of the $p_i$'s is zero. In particular, when $p_2=0$ and $0<p_1<1$, $X\sim\Cber{p_1}$ with $\rho(X)=p_1$ (see Example~\ref{example1}). For $(0,p_1,p_2)$ with any choice of $p_1,p_2$ or $(p_0,0,p_2)$ with $p_2>0$ the support contains a hole so that $\rho(X)=1$. For $(1,0,0)$, $\rho(X)$ is clearly zero. Also, as mentioned in Theorem~\ref{limrho}, $r$ is continuous on $\{p|\,p\in S,p_2>0\}$ as well as on $\{p|\,p\in S,p_1>0,p_2=0\}$.
\end{remark}

\begin{example} Poisson and more generally mixed Poisson random variables are in $\MS 0$ (see Theorem~\ref{T1}).
If $X$ has a Poisson distribution and $X=N(Y)$ then $Y$ is a.s. a constant.
\end{example}

\begin{example} \label{negb} $X$ has a generalized negative binomial distribution with probability function (see, {\em e.g.,} \cite{gj,kotz}) if, for some $a>0$,
\begin{align}\label{negbin}
\PP{X=k}=\frac{\Gamma(a+k)}{\Gamma(a)k!}(1-p)^kp^a\,,
\end{align}
for $k\ge 0$. In this case 
\begin{equation}
G_X(s)=\left(\frac{\frac{p}{1-p}}{\frac{p}{1-p}+s}\right)^\alpha\,.
\end{equation}
for $s\in\big(-\frac{p}{1-p},2+\frac{p}{1-p}\big)\supset [0,2]$ and does not exist elsewhere. As a function on $[0,\infty)$ the right side is clearly completely monotone on $(0,\infty)$ and continuous at zero. It is in fact the LST of $Y\sim\Cgam{\alpha}{\frac{p}{1-p}}$. Hence ((ii) of Theorem~\ref{ThCM0}) the generalized negative binomial distribution is in $\MS 0$. Of course, the geometric and the negative binomial distributions, where in both we count only failures, are special cases.
\end{example}

\begin{example}\label{Liu}[Conditional geometric] If $X\sim\text{Uniform}(\II {m-1})$ then 
\begin{align*}
\GEN X(s)=\frac{1}{m}\frac{1-s^{m}}{1-s}
\end{align*}
and, \changed{as will be shown in Example \ref{DU},} $\rho(X)=1-\frac{1}{m}$. We know that the generating function $\GEN X(s)$ is absolutely monotone on $(z,0)$ with $z=1-\frac{1}{\rho(X)}=-\frac{1}{m-1}$, but not on $(z-\veps,0)$ for any $\veps>0$. Now suppose that $\gamma\in(0,1]$ is a given probability. It follows that the function
\begin{align*}
g(s)=\frac{1-(\gamma s)^m}{1-\gamma s}
\end{align*}
is absolutely monotone on $(z/\gamma,0)$ and not on $(z/\gamma-\veps,0)$. In other words $g$ is \changed{a constant multiple} of the \pgf of some random variable $Y$ with $\rho(Y)=\frac{1}{1-z/\gamma}=\frac{(m-1)\gamma}{(m-1)\gamma+1}$. In fact  
\begin{align*}
\GEN Y(s)=\frac{1-\gamma}{1-\gamma^m}\frac{1-(\gamma s)^m}{1-\gamma s}=\frac{1-\gamma}{1-\gamma^m}\sum_{i=0}^{m-1} \gamma^is^i
\end{align*}
so $Y$ is a geometric r.v. conditioned to be smaller than $m$. This distribution and its convolution powers were studied in \cite{what}, where one has to set  $\gamma=\alpha/\beta$ and  $\alpha<\beta$.

The above distribution also makes sense if $\gamma>1$, {\em i.e.,}  $\delta:=1/\gamma$ is a probability. Then
\begin{align*}
\GEN Y(s)=\delta^{m-1}\frac{1-\delta}{1-\delta^{m}}\sum_{i=0}^{m-1} \delta^{i} s^{m-1-i},
\end{align*}
so $Y$ has the distribution of $m-1-X$, where $X$ is geometric, given $m-1-X\geq 0$. This is the distribution in \cite{what} with $\alpha>\beta$.
\end{example}

\begin{example}\label{Sibuya}
In a sequence of
independent experiments with decreasing success probabilities $\afrac{\ga}{k}$, $k=1,2,\ldots$, $\ga\in(0,1)$, the number of trials $X$ until the first success is observed has a so called \lex{Sibuya distribution}
(\cite{dev1}), with probabilities given by
\begin{align*}
\PP{X=k}=\frac{\ga}{k}\prod_{j=1}^{k-1}\kla{1-\frac{\ga}{j}}=(-1)^{k+1}{\ga\choose k},\qquad k=1,2,3,\ldots.
\end{align*}
We write $X\deq\Csibuya\ga$ in this case. The associated \apgf is
$\GG{X}(s)=1-s^\ga$. It follows from Hardy and Littlewood's Tauberian theorem that
$\PP{X\geq k}\sim \afrac{k^{-\ga}}{\Gamma(1-\ga)}$ as $k\to\infty$, in particular $\ee{(X^c)}=\infty$ for $c\geq \ga$, implying that $X$ has an infinite mean. Also, since the support of $X$ has a gap at zero,
$X\in\MS 1$.

If $X\deq\Csibuya{\ga}$ then $\al\cc X$ has a
 \lex{scaled Sibuya distribution} (\cite{cs}) with scale parameter $\al$, {\em i.e.,}
$\GG{\al\cc X}(s)=1-(\al s)^\ga$ and
\begin{align*}
\PP{\al\cc X=k}=\begin{cases}
1-\al^\ga&k=0\\
(-1)^{k+1}\al^\ga{\ga\choose k}&k=1,2,3,\ldots.
\end{cases}
\end{align*}
Obviously, $\alpha\circ X\in\MS\al$.
\end{example}

\begin{example}\label{Sibuya-1}
With the notations from Example~\ref{Sibuya}, consider $X-1$, the number of failures until the first success, when $X\sim\Csibuya\ga$. It is known ({\em e.g.,} \cite{dev1}) that $X-1\sim N(Y)$ where $Y\sim \frac{Z_1Z_{1-\gamma}}{Z_\gamma}$ and $Z_1,Z_{1-\gamma},Z_\gamma$ and $N(\cdot)$ are independent with $Z_\alpha\sim\Cgam{\alpha}{1}$ for $\alpha=1,1-\gamma,\gamma$ and hence $X-1\in \MS 0$.

We would like to demonstrate the use of Corollary~\ref{v3} to infer that $X-1\in\MS 0$, thereby giving an alternate (necessarily equivalent) representation of $Y$.

If $U\sim\Cunif{0}{1}$ and $V\sim\Cgam{1-\gamma}{1}$ are independent then, for $s> 0$,
\begin{equation}
L_{UV}(s)=\ee L_V(Us)=\ee\left(\frac{1}{1+sU}\right)^{1-\gamma}=\int_0^1(1+su)^{\gamma-1}{\rm d}u=\frac{(1+s)^\gamma-1}{\gamma s}\,,
\end{equation}
so that, with $p_0=\pp{X-1=0}=\pp{X=1}=\gamma$ and $\tilde Y=UV$,
\begin{equation}
\GEN {X-1}(-s)=\frac{(1+s)^\gamma-1}{s}=p_0\ee e^{-s\tilde Y}
\end{equation}
for every $s\in (0,1]$, where the right side is well defined on $[0,\infty)$ and completely monotone on $(0,\infty)$. Thus, if we take $Y$ with $\pp{Y\in dy}=\gamma e^y\pp{\tilde Y\in dy}$, then by Corollary~\ref{v3}, $X-1\sim N(Y)$ where $Y$ and $N$ are independent.

This necessarily implies that 
\begin{equation}
\frac{L_{UV}(s-1)}{L_{UV}(-1)}=L_{\frac{Z_1Z_{1-\gamma}}{Z_\gamma}}(s)\,,
\end{equation}
where we observe that $UV\sim e^{-Z_1}Z_{1-\gamma}$.

Although this is not the main point of this example, for the sake of completeness, it is straightforward to show that the density of $\tilde Y$ is given by
\begin{equation}
\frac{\Gamma(-\gamma,y)}{\Gamma(1-\gamma)}
\end{equation}
where $\Gamma(-\gamma,y)=\int_y^\infty e^{-t}t^{-\gamma-1}dt$ is the upper incomplete Gamma function. Since $\Gamma(-\gamma,y)=e^{-y}U(1+\gamma,1+\gamma,y)$ where $U$ is the confluent hypergeometric function of the second kind,  the density of $Y$ can be written as 
\begin{equation}\frac{\gamma U(1+\gamma,1+\gamma,y)}{\Gamma(1-\gamma)}=\frac{U(1+\gamma,1+\gamma,y)}{-\Gamma(-\gamma)}\,.
\end{equation}
A related expression may be found in Proposition~6 of~\cite{KP17}.\end{example}

\begin{example}[Mixtures]
Let $(\Theta,\mathcal{G},Q)$ be some probability space and assume that, for each $\theta\in\Theta$, $p(\theta,\cdot)$ is a probability mass function on $\mathbb{Z}_+$ and that for each $n\in \mathbb{Z}_+$, $p(\cdot,n)$ is $\mathcal{G}$-measurable. That is, $P(\theta,A)=\sum_{n\in A}p(\theta,n)$, where $\theta\in\Theta$ and $A\subset\mathbb{Z}_+$, is a (normal) Markov kernel. Let $K_\theta$ be such that $P(K_\theta=n)=p(\theta,n)$, let $K_\Theta$ be such that $P(K_\Theta=n)=\int_{\Theta}p(\theta,n)Q(\text{d}\theta)$ and 
for $\alpha\in (0,1)$ let $(\alpha\circ K)_\Theta$ satisfy

\begin{equation}\label{eq:aKt}
P((\alpha\circ K)_\Theta=n)=\int_\Theta P(\alpha\circ K_\theta=n)Q(\text{d}\theta)\,.
\end{equation}
Then it is straightforward to check that $(\alpha\circ K)_\Theta\sim \alpha\circ K_\Theta$. Similarly if  $\al>\kappa\equiv \sup_{\theta\in\Theta}\nor{K(\theta)}$ then
\begin{align*}
\al\inv\cc \gv K\Theta\deq \gv{(\al\inv\cc K)}\Theta,
\end{align*}
where $(\alpha^{-1}\circ K)_\Theta$ obeys (\ref{eq:aKt}) with $\alpha^{-1}$ replacing $\alpha$,
so that $\gv K\Theta\in\MM\al$ and $\nor{\gv K\Theta}\geq\kappa$. 

The mixed Poisson distribution is, of course, a special case of this setup. For this case $\alpha\circ K_\Theta$ takes the form $\alpha\circ N(Y)$, while for $(\alpha\circ K)_\Theta$ we first take $\alpha\circ N(y)\sim N(\alpha y)$ and then uncondition with respect to $y$ to obtain $N(\alpha Y)$. This results in $\alpha\circ N(Y)\sim N(\alpha Y)$ which we have already seen earlier. For this case for all $y\ge 0$ we have that $\rho(N(y))=\kappa=\rho(N(Y))=0$.
\end{example}

\section{Binomial mixtures}\label{BM}
There is a second mixing operation that can be performed on binomial random variables.

We first note that if $X,U_1,U_2,\ldots,$ are independent random variables with $X\in\MM 1$ and $U_i\sim\Cunif{0}{1}$, then, for $\alpha\in[0,1]$ we have that
\[\alpha\circ X\sim \sum_{i=1}^X 1_{\{U_i\le \alpha\}}\]
Let us replace $X$ by $n\in\mathbb{Z}_+$ and $\alpha$ by some independent $W$ with support in $[0,1]$. Denote a random variable having such a distribution by $W\circ n$, that is, 
\[W\circ n\sim \sum_{i=1}^n 1_{\{U_i\le W\}}\]
and let us call this distribution a {\em binomial mixture}. If we take a Poisson process $N$ with rate $1$, independent of $W$, then this is also the conditional distribution of $N(W)$ given $N(1)=n$.

Loosely speaking, this means that the conditional distribution of $X$ given $W=p$ is $\Cbinom{n}{p}$.
Binomial mixtures appear for example in connection with exchangeable experiments: If $Y_1,Y_2,Y_3\ldots$ is an infinite sequence of exchangeable Bernoulli random variables then de Finetti's theorem
(\cite{feller},VII.4) implies that $S_n=Y_1+Y_2+\ldots+Y_n$ is a binomial mixture. In fact, for this case the same $W$ can be used for every $n$. It is to be noted that the requirement that the sequence $\seq{Y}{n}{\sqNN}$ is infinite
is crucial in this setup.

The \apgf of $W\cc n$ is given by
\begin{align}\label{cap}
\GG{W\cc n}(s)=\ee{(1-sW)^n}.
\end{align}
It turns out that under certain conditions on $W$ the r.h.s. of \eqref{cap} defines a proper \apgf even if $\supp{W}\cap(1,\infty)$ is nonnempty. In fact, we have the following.
\begin{theorem}\label{B_n}
Let $W$ be a nonnegative random variable and set $n\ge 1$. Then (\ref{cap}) is a proper \apgf if and only if $\ee{W^n}<\infty$ and $\ee{W^k(1-W)^{n-k}}\ge 0$ for every $k\in K_n$ where
\begin{equation}\label{eq:K_n}
K_n=\{k|\,k\in\II n, n-k\ \text{is odd}\}\,.
\end{equation}
\end{theorem}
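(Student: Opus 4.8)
The plan is to rewrite the right-hand side of (\ref{cap}) as a polynomial in $1-s$ whose coefficients are, up to binomial factors, exactly the quantities $\mathbb{E}[W^k(1-W)^{n-k}]$, and then read off when that polynomial is a genuine \apgf. First I would use the splitting $1-sW=(1-W)+(1-s)W$ together with the binomial theorem to get, pointwise,
$$(1-sW)^n=\bkla{(1-W)+(1-s)W}^n=\sum_{k=0}^n\binom{n}{k}(1-s)^kW^k(1-W)^{n-k}.$$
Assuming $\mathbb{E}[W^n]<\infty$, so that every moment $\mathbb{E}[W^j]$ with $0\le j\le n$ is finite and the interchange of finite sum and expectation is justified, taking expectations gives
$$G_{W\circ n}(s)=\sum_{k=0}^n c_k(1-s)^k,\qquad c_k=\binom{n}{k}\mathbb{E}[W^k(1-W)^{n-k}].$$
Because $\sum_{k=0}^n\binom{n}{k}W^k(1-W)^{n-k}=(W+(1-W))^n=1$ identically, one obtains $\sum_k c_k=1$ for free. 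Hence $G_{W\circ n}$ is a proper \apgf (namely $\mathbb{E}[(1-s)^X]$ for a $\sqNN$-valued $X$ supported on $\{0,\dots,n\}$ with $P(X=k)=c_k$) if and only if $c_k\ge 0$ for every $k\in I_n$.

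The key step, which collapses the family of relevant conditions down to $K_n$, is an elementary sign observation. When $n-k$ is even, $(1-W)^{n-k}=|1-W|^{n-k}\ge 0$, so the integrand $W^k(1-W)^{n-k}$ is nonnegative (as $W\ge 0$) and $c_k\ge 0$ holds automatically, with no assumption on the law of $W$. Only when $n-k$ is odd can $W^k(1-W)^{n-k}$ take negative values, precisely on $\{W>1\}$, and these are exactly the indices $k\in K_n$ appearing in (\ref{eq:K_n}). This immediately yields sufficiency: granted the $K_n$ conditions, all $c_k\ge 0$, and since $\sum_k c_k=1$, $G_{W\circ n}$ is the \apgf of a proper distribution. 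For necessity, if $G_{W\circ n}$ is a proper \apgf then $c_k=P(X=k)\ge 0$ for all $k$, so in particular $\mathbb{E}[W^k(1-W)^{n-k}]=c_k/\binom{n}{k}\ge 0$ for every $k\in K_n$.

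It remains to deal with the moment requirement $\mathbb{E}[W^n]<\infty$, which is the one genuinely delicate point and which I would treat by contraposition. Suppose $\mathbb{E}[W^n]=\infty$. In $(1-sW)^n$ the leading term in $W$ is $(-sW)^n$, so for any fixed $s\ne 0$ the dominant contribution to $\mathbb{E}[(1-sW)^n]$ comes from large $W$. If $n$ is even the integrand is nonnegative and $\mathbb{E}[(1-sW)^n]=+\infty$; if $n$ is odd, then the negative part $\mathbb{E}\big[(sW-1)^n1_{\{W>1/s\}}\big]$ is infinite while the positive part ($W$ restricted to the bounded set $\{W\le 1/s\}$) is finite, so $G_{W\circ n}(s)$ is infinite or undefined. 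In either case $G_{W\circ n}$ cannot be the \apgf of a proper distribution, so $\mathbb{E}[W^n]<\infty$ is necessary; conversely it is exactly what was used above to legitimize the expansion. The main obstacle is precisely this well-definedness issue, since the algebraic core of the statement is the clean sign argument that reduces the nonnegativity conditions to the index set $K_n$.
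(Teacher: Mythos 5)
Your proof is correct and follows essentially the same route as the paper's: expand $(1-sW)^n$ via $1-sW=(1-W)+(1-s)W$, note that the resulting coefficients sum to one, and observe that only the indices with $n-k$ odd can yield a negative coefficient, which collapses the conditions to $K_n$. The only minor difference is in establishing necessity of $\ee{W^n}<\infty$: the paper evaluates the putative \apgf at $s=1$ and uses that its value there must lie in $[0,1]$, whereas you argue by contraposition that the expectation diverges at every $s\neq 0$ --- both arguments rest on the same split of the integrand over $\{W\le 1/s\}$ and $\{W>1/s\}$ together with a parity check on $n$.
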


\begin{remark}
Observe that for $n=1$ the conditions of Theorem~\ref{B_n} are met if and only if $\ee{W}\le 1$. For $n=2$ they are met if and only if $\ee{W^2}\le\ee{W}<\infty$ and from $(EW)^2\le EW^2$ this also implies that $EW\le 1$. Thus $\mathbb{B}(2)\subset \mathbb{B}(1)$ (also see Corollary~\ref{BCB} below). In both cases, the distribution of $W$ may have unbounded support; {\em e.g.,} $\exp(1)$ for $n=1$ and $\exp(2)$ for $n=2$ (also see Example~\ref{EXP} below).
\end{remark}

\begin{proof}
If (\ref{cap})  is a proper \apgf then in particular it is nonnegative and nonincreasing in $s$ and thus $\ee{(1-W)^n}\in[0,1]$. Since $\ee{(1-W)^n}1_{\{W\le 1\}}\in[0,1]$, then a separate consideration for $n$ odd and even implies that $\ee{(W-1)^n}1_{\{W> 1\}}\in[0,1]$.
Hence, $\ee{|W-1|^n}<\infty$, which is equivalent to $\ee{W^n}<\infty$. Obviously, since $W$ is nonnegative, $\ee W^k(1-W)^{n-k}\ge 0$ for $k\in K_n$ if and only if it holds for $k\in\II n$.

It therefore remains to prove the equivalence under the condition that $\ee{W^n}<\infty$. Since $\sum_{k=0}^n (1-s)^kp_k$ is an \apgf if and only if $p_k\ge 0$ for $k\in \II n$ and $\sum_{k=0}^np_k=1$ and since
\begin{equation}
\ee (1-sW)^n=\sum_{k=0}^n (1-s)^k{n\choose k}\ee W^k(1-W)^{n-k}\,,
\end{equation}
(in particular for $s=0$) the equivalence is immediate.
\end{proof}

From here on we will agree to write $W\in\BB$ whenever $W$ satisfies the conditions of Theorem~\ref{B_n}. In this case the notation $W\circ n$ denotes a random variable having a distribution with \apgf satisfying (\ref{cap}).

\begin{corollary}\label{BCB}
For every $n\ge 1$, $\BB\subset \mathbb{B}(n-1)$.
\end{corollary}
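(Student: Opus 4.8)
The plan is to reduce everything to the characterization furnished by Theorem~\ref{B_n}: for any $m\ge 1$, $\WSYM(m)$ consists of exactly those nonnegative $W$ with $\ee{W^m}<\infty$ and $\ee{W^k(1-W)^{m-k}}\ge 0$ for every $k\in\II m$. (The observation recorded in the proof of Theorem~\ref{B_n} is what lets me enlarge the index set from $K_m$ to all of $\II m$, and it is the $\II m$ form I will use, since the telescoping below naturally produces indices outside $K_m$.) So I fix $W\in\BB$ and simply verify these two conditions with $m=n-1$.

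The moment condition is immediate: since $w^{n-1}\le 1+w^n$ for all $w\ge 0$, we get $\ee{W^{n-1}}\le 1+\ee{W^n}<\infty$, the finiteness of $\ee{W^n}$ being part of the hypothesis $W\in\BB$. In passing this also guarantees that $\ee{W^j(1-W)^{n-j}}$ is a well-defined finite number for every $j\in\II n$, because $|W^j(1-W)^{n-j}|\le W^j(1+W)^{n-j}$ is dominated by a polynomial in $W$ of degree $n$, and all moments of $W$ up to order $n$ are finite. This integrability is exactly what I will need to split an expectation into two pieces.

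The nonnegativity condition is the crux, and it follows from a one-line identity obtained by writing $1=W+(1-W)$ inside the product. For $k\in\II{n-1}$,
\begin{equation}
W^k(1-W)^{n-1-k}=W^{k+1}(1-W)^{n-(k+1)}+W^k(1-W)^{n-k}.
\end{equation}
Taking expectations (legitimate by the integrability just noted) gives $\ee{W^k(1-W)^{n-1-k}}=\ee{W^{k+1}(1-W)^{n-(k+1)}}+\ee{W^k(1-W)^{n-k}}$. Both summands have the form $\ee{W^j(1-W)^{n-j}}$, with $j=k+1$ and $j=k$ respectively; since $0\le k\le n-1$, both indices lie in $\II n$, so each summand is $\ge 0$ by the assumption $W\in\BB$. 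Hence their sum is $\ge 0$, which is precisely the required inequality for $m=n-1$, and the two conditions together give $W\in\WSYM(n-1)$.

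I do not expect a genuine obstacle: the only points requiring care are to invoke the $\II n$ (rather than $K_n$) form of the criterion, and to confirm integrability before splitting the expectation; once $1=W+(1-W)$ is written down the result drops out. For the boundary case $n=1$ the target is $\WSYM(0)$, which (whether read through Theorem~\ref{B_n} with an empty constraint set $K_0=\emptyset$, or directly, since $\ee{(1-sW)^0}\equiv 1$ is always a proper \apgf) consists of all nonnegative $W$, so the inclusion $\WSYM(1)\subset\WSYM(0)$ is automatic and no separate argument is needed.
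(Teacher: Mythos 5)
Your proof is correct and is essentially the paper's own argument: both rest on the identity $\ee{W^k(1-W)^{n-1-k}}=\ee{W^k(1-W)^{n-k}}+\ee{W^{k+1}(1-W)^{n-k-1}}$ together with the observation (recorded in the proof of Theorem~\ref{B_n}) that nonnegativity over $K_n$ is equivalent to nonnegativity over all of $\II{n}$. The extra care you take with the moment bound, the integrability before splitting the expectation, and the $n=1$ boundary case is sound but not a different route.
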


\begin{proof}
Let $W\in\BB$. It is easy to verify that for $k\in\II {n-1}$
\begin{equation}
\ee W^k(1-W)^{n-1-k}=\ee W^k(1-W)^{n-k}+\ee W^{k+1}(1-W)^{n-k-1}
\end{equation}
where, by Theorem~\ref{B_n}, both terms on the right are non-negative. Therefore the expression on the left is nonnegative for all $k\in\II {n-1}$ and in particular for $k\in K_{n-1}$. Applying Theorem~\ref{B_n} again we have that $W\in\mathbb{B}(n-1)$.
\end{proof}

\begin{remark}
It is natural to wonder whether there exists a random variable $W$ with $\supp{W}\cap(1,\infty)$ nonempty such that $W\in\BB$ for all $n\ge 1$. The answer is that this is impossible. The reason is that due to Theorem~\ref{B_n} this would be equivalent to $\ee{W^k(1-W)^\ell\ge0}$ for all $k,\ell$. From monotone convergence it would then follow that for odd $\ell$
\[
\ee{W^k(1-W)^\ell 1_{\{W>1\}}}\to -\infty
\]
as $k\to\infty$, while $\ee{W^k(1-W)^\ell 1_{\{W\le 1\}}}\le 1$ for all $k$. Thus, for sufficiently large $k$ we would necessarily have that $\ee{W^k(1-W)^\ell}<0$.
\end{remark}

\begin{remark}\label{unbdd}
Is it true that for every $n\ge 1$ there exists $W\in\BB$ with $\supp{W}\cap (1,\infty)$ nonnempty? Is it true that $\BB$ contains distributions with infinite support? 
Example~\ref{EXP}, to appear later, implies that the answer to both questions is yes.
\end{remark}

We would like to emphasize two facts. The first is that for $W\cc n$ an interpretation as a binomially mixed random variable is available only if $\supp{W}\subset [0,1]$. The second is that for any $W\in\BB$ the first $n$ moments of $W$ identify the distribution of $W\circ n$ and thus, in contrast to the case of binomial thinning, the distribution of $W$ is not uniquely determined from the distribution of $W\circ n$.

As for the $\cc$
 operation there are a number of immediate rules:
\begin{enumerate}
\item $(\al W)\cc n\deq \al\cc (W\cc n)$, $\alpha\in[0,1]$.
\item $\PP{W\cc n=k}={n\choose k}\ee{W^{k}(1-W)^{n-k}}$.
\item $\ee{{W\cc n\choose j}}={n\choose j}\ee{W^j}$ in particular $\ee{(W\cc n)}=n\ee{W}$. Recall ${k\choose j}=0$ when $k<j$.
\end{enumerate}
For a non-negative random variable $W$ with $\ee W^n<\infty$ we let
\begin{align*}
\sig W=\inf\{\al\in(0,\infty): W/\al\in\BB\}.
\end{align*}
In contrast to the often difficult evaluation of $\nor X$, it turns out that the value $\sig W$ is a root of a certain polynomial. Recall~(\ref{eq:K_n}).

\begin{theorem}\label{tig} When $\PP{W=0}=1$, $\sigma_n(W)=0$. Otherwise, for any $W$ with $\ee W^n<\infty$,
for each $k\in K_n$ there is a unique positive root $\alpha_k$ of the polynomial $g_k(\alpha)=\ee{W^k(\alpha-W)^{n-k}}$ and
\begin{equation}\label{eq:sigmaW}
\sigma_n(W)=\max\{\alpha_k|\,k\in K_n\}
\end{equation}
and $\rho(W/\sigma_n(W))\circ n)=1$. Consequently, for any $W\in\BB$ we have that $\nor{W\circ n}=\sig W$.
\end{theorem}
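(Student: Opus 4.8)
The plan is to reduce everything to the sign characterisation of $\BB$ provided by Theorem~\ref{B_n}. Since $\ee{W^n}<\infty$, for every $\alpha>0$ we have $\ee{(W/\alpha)^n}<\infty$, so $W/\alpha\in\BB$ is equivalent to $\ee{(W/\alpha)^k(1-W/\alpha)^{n-k}}\ge 0$ for all $k\in K_n$; multiplying through by $\alpha^n>0$ this reads $g_k(\alpha)\ge 0$ for all $k\in K_n$, where $g_k(\alpha)=\ee{W^k(\alpha-W)^{n-k}}$. Hence $\sig W=\inf\{\alpha>0:\ g_k(\alpha)\ge 0\ \text{for all}\ k\in K_n\}$. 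The degenerate case $\pp{W=0}=1$ is immediate: then $g_k\equiv 0$ for $k\ge 1$ while $g_0(\alpha)=\alpha^n>0$, so every $\alpha>0$ qualifies and $\sig W=0$. For the rest I assume $\pp{W=0}<1$, so that $\ee{W^j}>0$ for every $j\ge 0$.

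First I would study each $g_k$ separately for $k\in K_n$, i.e. with $n-k$ odd. As a polynomial in $\alpha$ with coefficients built from the finite moments $\ee{W^j}$, $j\le n$, it may be differentiated term by term to give $g_k'(\alpha)=(n-k)\ee{W^k(\alpha-W)^{n-k-1}}$; since $n-k-1$ is even the integrand is nonnegative, so $g_k$ is nondecreasing on $\mathbb{R}$. Moreover $g_k(0)=(-1)^{n-k}\ee{W^n}=-\ee{W^n}<0$, while the leading term $\alpha^{n-k}\ee{W^k}$ forces $g_k(\alpha)\to+\infty$; by continuity $g_k$ has at least one positive root and $\{\alpha>0:\ g_k(\alpha)\ge 0\}$ is a half-line $[\alpha_k,\infty)$. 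Uniqueness of the root is the one point needing care: if $g_k$ vanished on a nondegenerate interval then $g_k'\equiv 0$ there, forcing $W^k(\alpha-W)^{n-k-1}=0$ a.s. for each $\alpha$ in that interval, which (taking two distinct such $\alpha$ and distinguishing $n-k=1$ from $n-k\ge 3$, and $k=0$ from $k\ge 1$) would force $\pp{W=0}=1$, a contradiction. Consequently each $\alpha_k$ is the unique positive root and, intersecting the half-lines, $\{\alpha>0:\ W/\alpha\in\BB\}=[\max_{k\in K_n}\alpha_k,\infty)$, so the infimum is attained and $\sig W=\max\{\alpha_k:\ k\in K_n\}$.

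Next I would establish $\nor{(W/\sig W)\cc n}=1$. Write $\alpha^\star=\sig W$ and $V=W/\alpha^\star$, so that $V\in\BB$ and $V\cc n$ has bounded support in $\II n$. I claim $V\cc n\notin\MM\beta$ for any $\beta\in(0,1)$. Indeed, if $V\cc n\deq\beta\cc Z$ with $Z\in\MM 1$, then by \eqref{basic} $\GG{V\cc n}(s)=\GG{Z}(\beta s)$, so the two polynomials $\GG Z(s)$ and $\GG{V\cc n}(s/\beta)=\ee{(1-s(V/\beta))^n}$ agree on an interval and hence everywhere; thus $\GG Z$ is precisely the right-hand side of \eqref{cap} for $(V/\beta)\cc n$, and being a proper \apgf it forces $V/\beta\in\BB$ by Theorem~\ref{B_n}. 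But $V/\beta=W/(\alpha^\star\beta)$, and the first step shows $W/\gamma\in\BB$ only for $\gamma\ge\alpha^\star$; hence $\alpha^\star\beta\ge\alpha^\star$, i.e. $\beta\ge 1$, a contradiction. Therefore $\nor{V\cc n}=1$.

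Finally, for $W\in\BB$ (so $\alpha^\star=\sig W\le 1$, since $W/1\in\BB$) I would invoke the scaling rule (i), $(\alpha^\star V)\cc n\deq\alpha^\star\cc(V\cc n)$, to write $W\cc n\deq\alpha^\star\cc(V\cc n)$ with $V\cc n\in\MS 1$ by the previous paragraph. If $\alpha^\star\in(0,1)$, Theorem~\ref{thmstar} then yields $\nor{W\cc n}=\alpha^\star=\sig W$; the boundary cases $\alpha^\star=1$ (where $V\cc n=W\cc n$) and $\alpha^\star=0$ (the degenerate $\pp{W=0}=1$, where $W\cc n=0$ a.s.) give $\nor{W\cc n}=\sig W$ directly. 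I expect the uniqueness of the root $\alpha_k$ and the reduction in the third paragraph---recognising $\GG{V\cc n}(s/\beta)$ as the \apgf of the rescaled binomial mixture $(V/\beta)\cc n$---to be the two steps requiring the most care, the latter being the conceptual heart of the argument.
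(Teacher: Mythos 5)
Your proof is correct, and it diverges from the paper's at the one step that carries the real weight, namely showing $\nor{(W/\sig W)\cc n}=1$. The paper gets this from its support machinery: writing $\pp{(W/\alpha)\cc n=k}=\binom{n}{k}\alpha^{-n}g_k(\alpha)$, it observes that at $\alpha=\sig W$ the maximizing index $k(W)$ receives probability zero while $n$ receives positive probability, so the support of $(W/\sig W)\cc n$ contains a hole; Theorem~\ref{hole} (bounded support plus a hole implies membership in $\MS 1$) then finishes. You instead argue directly on the transform: if $(W/\sig W)\cc n\deq\beta\cc Z$ with $\beta<1$, then $\GG Z(s)=\ee{(1-s\,W/(\sig W\beta))^n}$ as polynomials, so $\GG Z$ is itself of the form \eqref{cap} and Theorem~\ref{B_n} forces $W/(\sig W\beta)\in\BB$, contradicting the minimality of $\sig W$ established in your first step. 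Both arguments are sound; the paper's is shorter given that Theorem~\ref{hole} is already in hand and has the side benefit of locating the hole at $k(W)$, while yours is more self-contained within the mixing framework and makes transparent the conceptual point that de-thinning a binomial mixture is again a binomial mixture with a rescaled mixing variable, so $\rho$-minimality and $\sigma_n$-minimality must coincide. The remaining pieces (the sign characterisation $W/\alpha\in\BB\Leftrightarrow g_k(\alpha)\ge0$ for $k\in K_n$, the monotonicity and root analysis of $g_k$, and the final appeal to $(\sig W)\cc(V\cc n)\deq W\cc n$ together with Theorem~\ref{thmstar}) match the paper's route; in fact your uniqueness argument for the root $\alpha_k$ via $g_k'(\alpha)=(n-k)\ee{W^k(\alpha-W)^{n-k-1}}$ is slightly more careful than the paper's bare assertion that $g_k$ is strictly increasing, and you compute $g_k(0)=-\ee{W^n}$ correctly where the paper has a small typo.
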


\begin{proof}
We first note that when $W=0$ a.s. then $(W/\alpha)\circ n=0$ a.s. for all $\alpha\in (0,\infty)$ (so that $W/\alpha\in\BB$) and thus $\sigma_n(W)=0$. As was discussed earlier, for this case we also clearly have that $\rho(W\circ n)=0$. Therefore, from here on it suffices to assume that $\PP{W=0}<1$.

From Theorem~\ref{B_n} we have, for $W\in\BB$ (hence, necessarily $\ee{W^n}<\infty$) and $\alpha\in (0,\infty)$, that $W/\alpha\in\BB$ if and only if $g_k(\alpha)\ge 0$ for $k\in K_n$, in which case 
\begin{equation}\label{eq:gk} 
\PP{(W/\alpha)\circ n=k}={n\choose k}\alpha^{-n}g_k(\alpha)\,.
\end{equation} 
For $k\in K_n$,  $g_k$ is continuous, strictly increasing with $g_k(0)=-\ee{W^k}<0$ and $\lim_{\alpha\to\infty} g_k(\alpha)=\infty$. Therefore there exists a unique positive $\alpha_k$ such that $g_k(\alpha_k)=0$. Consider 
\begin{equation}
k(W)=\arg\max\{\alpha_k|\,k\in K_n\}\,.
\end{equation} 
It is clear from Theorem~\ref{B_n} that for every $\alpha\in[\alpha_{k(W)},1]$ we have that
$W/\alpha\in\BB$ and that for every $\alpha\in[0,\alpha(W))$ we have that there exists some $k\in K_n$ such that $g_k(\alpha)<0$ so that by Theorem~\ref{B_n}, $W\not\in\BB$. Therefore $\sigma_n(W)=\alpha_{k(W)}$ and (\ref{eq:sigmaW}) is satisfied.

Since $g_{k(W)}(\alpha_{k(W)})=0$, $g_{k}(\alpha_{k(W)})\ge 0$ for $k\in K_n$ (since $\alpha_{k(W)}\ge \alpha_k$) and $g_n(\alpha_{k(W)})=\ee{W^n}>0$, it follows from (\ref{eq:gk}) that
the support of the distribution of $(W/\sigma_n(W))\circ n$ contains $n$ and has a hole in $k(W)$.  Thus, Theorem~\ref{hole} implies that $(W/\sigma_n(W))\circ n\in \MSF 1$. Since whenever $W\in\BB$ we have that $\sigma_n(W)\le 1$, then $W\circ n\sim \sigma_n(W)\circ((W/\sigma_n(W))\circ n)$ and it follows from Theorem~\ref{thmstar} that $W\circ n\in \MSF {\sigma_n(W)}$ and thus $\rho(W\circ n)=\sigma_n(W)$.
\end{proof}

\begin{remark}
In order to efficiently find the solutions of $EW^k(z-W)^{n-k}=0$ for $k\in K_n$, let us write  it  as
$a(z)=b(z)$, where
\begin{align}
a(z)=\ee{W^i(z-W)^{n-i}1_{\{W\le z\}}},\quad
b(z)=\ee{W^i(W-z)^{n-i}1_{\{W> z\}}}.\label{axbx}
\end{align}

The function $a(x)$ is non-negative and increasing with $a(0)=0$, while $b(x)$ is non-negative, decreasing with $b(0)>0$. Hence, in order to solve $a(x)=b(x)$ a binary search algorithm might be employed: start with an
initial value $z_0\in(0,1)$ and update according to
\begin{eqnarray*}
z_{n+1}=\begin{cases}
z_{n-1}+2^{-n-1}&\text{if}\quad a(z_{n-1})>b(z_{n-1})\\
z_{n-1}-2^{-n-1}&\text{if}\quad a(z_{n-1})<b(z_{n-1}).
\end{cases}
\end{eqnarray*}
\end{remark}

\begin{remark} 
If, for some $\alpha\in (0,1]$, $\supp{W}\subset [0,\alpha]$ then clearly $W/\alpha\in\BB$. Therefore, $\rho(W\circ n)=\sigma_n(W)\le \alpha$ and thus $W\circ n\in \MM \alpha$.
\end{remark}

The natural place of the following example is at the end of Section~\ref{DBS}. It appears here since Theorem~\ref{tig} allows a smoother derivation.

\begin{example}[Discrete uniform distribution]\label{DU} If $X\sim\text{Uniform}(\II n)$ then $\rho(X)=\frac{n}{n+1}$.

To see this, let $W\sim\Cunif 01$ and set $X\sim W\circ n$. Then, for $k\in\II n$,
\begin{align}
\PP{X=k}={n\choose k}\int_0^1 t^k(1-t)^{n-k}\,\text{d}t=\frac{1}{n+1},\label{sib}
\end{align}
so that indeed $X\sim\text{Uniform}(\II n)$. By Theorem~\ref{tig} $\nor{X}=\sig{W}$ is the maximum of the unique roots $\alpha_k$ of $g_k(\alpha)=\smallint_0^1t^k(\alpha-t)^{n-k}\,\text{d}t$ in $(0,1]$ for $k\in K_n$. When $k\ge 2$ (and $n-k$ is odd), integration by parts leads to
\begin{align*}
g_k(\alpha)&=\frac{k}{n-k+1}g_{k-1}(\alpha)-\frac{\kla{1-\alpha}^{n-k+1}}{n-k+1}
\\
&=\frac{k}{n-k+1}\frac{k-1}{n-k+2}g_{k-2}(\alpha)-\frac{\kla{1-\alpha}^{n-k+1}}{n-k+1}+\frac{k}{n-k+1}\frac{\kla{1-\alpha}^{n-k+2}}{n-k+2}.
\end{align*}
Since $g_k(\alpha_k)=0$ and $g_k(1)>0$ we have that $\alpha_k<1$. Therefore,
\begin{align*}
g_{k-2}(\alpha_k)
=\frac{1}{k-1}\kla{1-\alpha_k}^{n-k+1}\kla{\alpha_k+\frac{n+2}{k}}>0,
\end{align*}
so that necessarily $\alpha_k>\alpha_{k-2}$. Noting that $n-(n-1)=1$ is odd, it follows that $\alpha_{n-1}$ is the desired maximal root which determines $\rho(X)=\sigma_n(W)$. From
\begin{align*}
g_{n-1}(\alpha)=\int_0^1 t^{n-1}\kla{\alpha-t}\,\text{d}t=\frac{\alpha}{n}-\frac{1}{n+1},
\end{align*}
it follows that $\alpha_{n-1}=n/(n+1)$ and thus $\nor X=n/(n+1)$ or equivalently $X\in\MS{n/(n+1)}$.
\end{example}

\begin{remark}
When $X_n\sim \text{Uniform}(I_n)$ and $X_n\sim\frac{n}{n+1}\circ Z_n$, it is straightforward to show that 
(\ref{bj2}) with $\alpha=\frac{n}{n+1}$ becomes 
\begin{equation}
\PP{Z_n=j}=p^\star_j\left(\frac{n}{n+1}\right)=(n+1)^{j-1}\sum_{i=j}^n{i\choose j}\frac{1}{n^i}(-1)^{i-j}\,.
\end{equation}
In particular,
\begin{equation}
P(Z_n=0)=\frac{n^{n+1}-(-1)^{n+1}}{(n+1)^2n^n}\,,\quad P(Z_n=n-1)=0\,,\quad P(Z_n=n)=\frac{(n+1)^{n-1}}{n^n}\,,
\end{equation}
noting  that the distribution of $Z_n\in\MS 1$ has a hole in $n-1$.

Clearly, $\PP{Z_n=n}/\PP{X_n=n}$ is asymptotic to $e$ while $\PP{Z_n=0}/\PP{X_n=0}$ is asymptotic to one.
\end{remark}

The following theorem may be compared with Theorem~\ref{PoiApp}, where we  recall that $N(W)$ has a mixed Poisson distribution with conditional mean $W$.
\begin{theorem}\label{Wonto}
Suppose $W$ is a nonnegative random variable, $\seq an{\sq N}$ is a positive sequence such that $a_n/n\to 1$ as $n\to\infty$, $W/a_n\in\BB$ for each $n\ge 1$ and $\ee e^{\epsilon W}<\infty$ for some $\epsilon>0$.  Then
$(W/a_n)\circ n\dto N(W)$.
\end{theorem}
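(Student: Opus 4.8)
The plan is to work with the alternate probability generating function and to prove that $G_{(W/a_n)\circ n}$ converges to $L_W=G_{N(W)}$ on a small interval, and then to upgrade that to convergence in distribution. Since $\mathbb{E}e^{\epsilon W}<\infty$ forces every moment of $W$ to be finite and $W<\infty$ almost surely, each $W/a_n\in\mathbb{B}(n)$ makes $(W/a_n)\circ n$ a genuine random variable, and \eqref{cap} gives
\begin{equation*}
G_{(W/a_n)\circ n}(s)=\mathbb{E}\left[(1-sW/a_n)^n\right].
\end{equation*}
Because $N(W)$ has a mixed Poisson distribution with $G_{N(W)}(s)=L_W(s)=\mathbb{E}e^{-sW}$ on $[0,2]$, it suffices to show that $\mathbb{E}[(1-sW/a_n)^n]\to\mathbb{E}e^{-sW}$ on a set with an accumulation point and then to conclude weak convergence.

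First I would record the pointwise convergence of the integrand: for fixed $s$ and (a.s.\ finite) $W$, one has $1-sW/a_n>0$ for large $n$ and $n\log(1-sW/a_n)=-sW\,(n/a_n)+o(1)\to -sW$ since $a_n/n\to1$, whence $(1-sW/a_n)^n\to e^{-sW}$ almost surely. The main obstacle is then the passage to the limit inside the expectation: $1-sW/a_n$ can be negative for large $W$, so the convergence is not monotone, and the only integrability available is $\mathbb{E}e^{\epsilon W}<\infty$ for a possibly small $\epsilon$. For this reason I would \emph{not} attempt to dominate on all of $[0,1)$, since the naive bound $|1-sW/a_n|^n\le(1+W/a_n)^n\le e^{(n/a_n)W}$ would require $\mathbb{E}e^{(1+\delta)W}<\infty$, which need not hold.

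Instead, fix any $s_0\in(0,\min(1,\epsilon))$. Using $|1-x|\le1+x$ and $(1+x)^n\le e^{nx}$ for $x\ge0$, for $s\in[0,s_0]$ we get
\begin{equation*}
|1-sW/a_n|^n\le(1+s_0W/a_n)^n\le e^{s_0(n/a_n)W},
\end{equation*}
and since $n/a_n\to1$ there is $N_0$ with $s_0(n/a_n)\le\epsilon$ for $n\ge N_0$, so $|1-sW/a_n|^n\le e^{\epsilon W}$, which is integrable. Dominated convergence then yields $G_{(W/a_n)\circ n}(s)\to L_W(s)$ for every $s\in[0,s_0]$.

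Finally I would upgrade convergence on $[0,s_0]$ to convergence in distribution by an analyticity argument, which is exactly where the restriction to a subinterval is repaired. The p.g.f.s $P_{(W/a_n)\circ n}(u)=G_{(W/a_n)\circ n}(1-u)$ are power series with coefficients in $[0,1]$, hence uniformly bounded on compact subsets of the open unit disc; by Vitali's (equivalently Montel's) theorem the family is normal, and convergence on $\{1-s:s\in[0,s_0]\}$, a set with an accumulation point, forces locally uniform convergence on the whole disc to the analytic function $P_{N(W)}$. Consequently the Taylor coefficients converge, i.e.\ $P((W/a_n)\circ n=k)\to P(N(W)=k)$ for every $k$, giving $(W/a_n)\circ n\dto N(W)$. (Equivalently, since $G_X(s)=L_X(-\log(1-s))$, convergence of the a.p.g.f.s on $[0,s_0]$ is convergence of LSTs on a neighbourhood of $0$, and the same analytic extension reduces the claim to the continuity theorem for LSTs used earlier.)
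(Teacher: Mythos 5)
Your proof is correct, and it follows the paper's overall strategy (show convergence of the a.p.g.f.\ $\ee{(1-sW/a_n)^n}\to\ee{e^{-sW}}$ on a right neighbourhood of $0$, then conclude convergence in distribution), but the key estimate is carried out differently. The paper splits the expectation at the event $\{W\le a_n/s\}$: on that event the integrand lies in $[0,1]$ and bounded convergence applies, while on $\{W>a_n/s\}$ it bounds $(sW/a_n-1)^n$ by maximising $x^ne^{-\epsilon x}$ at $x=n/\epsilon$, which yields a factor $f(ns/(\epsilon a_n))^n$ with $f(x)=xe^{-1-x^{-1}}$ and forces the restriction $s<\epsilon s^*$ with $f(s^*)=1$. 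You instead dominate globally: for $s\le s_0<\min(1,\epsilon)$ and $n$ large, $|1-sW/a_n|^n\le e^{s_0(n/a_n)W}\le e^{\epsilon W}$, and a single application of dominated convergence finishes the estimate. Your route is shorter and avoids the explicit maximisation and the constant $s^*\approx 3.59$; the paper's truncation yields convergence on the somewhat larger interval $(0,\epsilon s^*)$, which is immaterial here since any neighbourhood of $0$ suffices. You also spell out the final step --- upgrading convergence of the \pgfs on $[1-s_0,1]$ to convergence in distribution via normality of the uniformly bounded family of \pgfs on the unit disc and the identity theorem --- which the paper leaves implicit in the phrase ``from which the result follows''; that is a welcome addition, and your parenthetical reduction to the LST continuity theorem is equally valid.
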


\begin{proof}
Since $a_n/n\to 1$, we have by bounded convergence that, for $s>0$,
\begin{equation}
\ee \left(1-s\frac{W}{a_n}\right)^n1_{\{W\le a_n/s\}}=\ee \left(\left(1-s\frac{W}{a_n}\right)^{a_n}\right)^{n/a_n}1_{\{W\le a_n/s\}}
\end{equation}
converges to $\ee e^{-sW}$, which is also equal to $G_{N(W)}(s)$ on $[0,1]$.
Now,
\begin{align}\label{gto0}
\ee \left(s\frac{W}{a_n}-1\right)^n1_{\{W>a_n/s\}}=e^{-\epsilon a_n/s}\kla{\frac{s}{a_n}}^n\ee \left(W-\frac{a_n}{s}\right)^ne^{-\epsilon(W-a_n/s)}e^{\epsilon W}1_{\{W>a_n/s\}}\,.
\end{align}
Since $x^ne^{-\epsilon x}$ achieves its maximum at $x=n/\epsilon$, this implies that the right side of (\ref{gto0}) is bounded above by
\begin{equation}
\kla{\frac{s}{a_n}}^n\kla{\frac{n}{\epsilon}}^n e^{-n-\epsilon a_n/s}  \ee e^{\epsilon W}
=\kla{\frac{ns}{\epsilon a_n}e^{-1-\frac{\epsilon a_n}{ns}}}^n  \ee e^{\epsilon W}=f\left(\frac{ns}{\epsilon a_n}\right)^n\ee e^{\epsilon W}\,,
\end{equation}
where $f(x)\equiv xe^{-1-x^{-1}}$ is continuous and strictly increasing on $(0,\infty)$.
Let $s^*\approx 3.59112$ be the unique positive solution of $f(x)=1$. For $s\in(0,\epsilon s^*)$ let $N_s$ be such that for $n\ge N_s$
\begin{equation}
\frac{n}{a_n}\le \frac{1+\epsilon s^*/s}{2}\,.
\end{equation}
Then,
\begin{equation}
f\left(\frac{ns}{\epsilon a_n}\right)\le f\left(\frac{s^*+s/\epsilon}{2}\right)<f(s^*)=1\,
\end{equation}
and thus $f(\frac{ns}{\epsilon a_n})^n\to 0$ as $n\to\infty$. Since $\ee e^{\epsilon W}<\infty$, it follows that the left hand side of (\ref{gto0}) vanishes as $n\to\infty$ on $[0,\epsilon s^*)$. All of the above implies that $G_{(W/a_n)\circ n}(s)\to G_{N(W)}(s)$ on some positive right neighborhood of $0$ from which the result follows.
\end{proof}

The next lemma will be needed for Example~\ref{EXP} to follow.
\begin{lemma}\label{noneg}
Let $W\sim\exp(1)$. For every odd integer $\ell\ge 1$ and real $\gamma\ge 0$ the root of
\begin{equation}
\ee W^\gamma(z-W)^\ell
\end{equation}
is positive and strictly smaller than $\gamma+\ell/2+1$. For $\ell=1$ the root is precisely $z=\gamma+1$.
\end{lemma}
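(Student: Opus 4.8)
The plan is to reduce the two-sided claim to a single sign check and then verify that sign. Write $g(z)=\mathbb{E}[W^\gamma(z-W)^\ell]$, a polynomial in $z$. Since $\ell$ is odd, $g(0)=\mathbb{E}[W^\gamma(-W)^\ell]=-\mathbb{E}[W^{\gamma+\ell}]<0$, while $g(z)\to+\infty$ as $z\to\infty$; moreover $g'(z)=\ell\,\mathbb{E}[W^\gamma(z-W)^{\ell-1}]>0$ because $\ell-1$ is even, so $g$ is strictly increasing and has a unique positive root $z^\ast$ (this is exactly the root referred to, and it is positive). Consequently $z^\ast<\gamma+\tfrac{\ell}{2}+1$ is equivalent to $g(\gamma+\tfrac{\ell}{2}+1)>0$, which is what I will establish. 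The main tool is the identity $\mathbb{E}[f'(W)]=\mathbb{E}[f(W)]-f(0)$ for $W\sim\Cexp{1}$; applied to $f(w)=w^{\gamma+1}(z-w)^\ell$ (whose boundary term vanishes since $\gamma+1>0$) it yields, after writing $W^{\gamma+1}(z-W)^m=zW^\gamma(z-W)^m-W^\gamma(z-W)^{m+1}$, the three-term recursion
\[
\mathbb{E}[W^\gamma(z-W)^{\ell+1}]=(z-\gamma-1-\ell)\,\mathbb{E}[W^\gamma(z-W)^{\ell}]+\ell z\,\mathbb{E}[W^\gamma(z-W)^{\ell-1}].
\]

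For $\ell=1$ the computation is immediate and gives the stated exact value: $g(z)=z\,\mathbb{E}[W^\gamma]-\mathbb{E}[W^{\gamma+1}]=\Gamma(\gamma+1)\big(z-(\gamma+1)\big)$, using $\mathbb{E}[W^{\gamma+j}]=\Gamma(\gamma+j+1)$ and $\Gamma(\gamma+2)=(\gamma+1)\Gamma(\gamma+1)$. Hence $z^\ast=\gamma+1$, which is positive and strictly below $\gamma+\tfrac12+1$, confirming both assertions in this case.

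For general odd $\ell$ I would recast the sign check probabilistically. Dividing by $\Gamma(\gamma+1)$ turns the weight $w^\gamma e^{-w}$ into the density of $V\sim\Cgam{\gamma+1}{1}$, so that $g(c)/\Gamma(\gamma+1)=\mathbb{E}[(c-V)^\ell]$, and at the target point $c=\gamma+1+\tfrac{\ell}{2}=\mathbb{E}[V]+\tfrac{\ell}{2}$ this becomes $\mathbb{E}[(\tfrac{\ell}{2}-X)^\ell]$ with $X=V-\mathbb{E}[V]$ centred. Expanding and using that the first central moment vanishes, the quantity to be shown positive is $\sum_{i\ge0}\binom{\ell}{2i}(\tfrac{\ell}{2})^{\ell-2i}\mu_{2i}-\sum_{i\ge1}\binom{\ell}{2i+1}(\tfrac{\ell}{2})^{\ell-2i-1}\mu_{2i+1}$, where $\mu_j$ is the $j$-th central moment of $V$ (a polynomial in $\gamma+1$ of degree $\lfloor j/2\rfloor$ with nonnegative coefficients). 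The natural attack is to dominate each negative (odd-order) term by its positive neighbours: Cauchy--Schwarz gives $\mu_{2i+1}\le\sqrt{\mu_{2i}\mu_{2i+2}}$, and since the exponents of $\tfrac{\ell}{2}$ are matched so that $\ell-2i-1$ is the average of $\ell-2i$ and $\ell-2i-2$, AM--GM splits the $i$-th odd term between the even terms of orders $2i$ and $2i+2$.

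The step I expect to be the main obstacle is precisely this domination. The per-pair bookkeeping reduces to binomial-type inequalities (essentially $\tfrac12(\binom{\ell}{2i+1}+\binom{\ell}{2i-1})\le\binom{\ell}{2i}$), which hold near the top of the expansion but fail for the middle orders once $\ell$ is large; there the positivity is carried instead by the large slack in the leading term $(\tfrac{\ell}{2})^\ell$. I therefore expect the robust route to be an induction on odd $\ell$ built directly on the recursion above, carrying a strengthened hypothesis that controls $\mathbb{E}[W^\gamma(c-W)^\ell]$ together with a lower bound for its even-order companion $\mathbb{E}[W^\gamma(c-W)^{\ell-1}]$, so that the mixed signs in the recursion close up. As a consistency check one verifies the closed forms of $g(\gamma+1+\tfrac{\ell}{2})/\Gamma(\gamma+1)$ for small $\ell$ (e.g.\ $\tfrac52(\gamma+1)+\tfrac{27}{8}$ for $\ell=3$), all manifestly positive for $\gamma\ge0$; and in the special case $\gamma=0$, where $V\sim\Cexp{1}$, the sign check collapses to showing that a partial sum of the series for $e^{-c}$ is negative, which follows from the alternating-series estimate since $c<\ell+2$.
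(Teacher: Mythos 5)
Your reduction is sound as far as it goes: the monotonicity argument ($g'(z)=\ell\,\ee{W^\gamma(z-W)^{\ell-1}}>0$ since $\ell-1$ is even, $g(0)<0$, $g(z)\to\infty$) correctly shows there is a unique positive root and that the claim is equivalent to $g(\gamma+\ell/2+1)>0$; the $\ell=1$ computation is also correct, and your three-term recursion checks out. But the heart of the lemma --- the sign check for general odd $\ell\ge 3$ --- is not actually proved. You candidly say so yourself: the Cauchy--Schwarz/AM--GM domination of the odd central-moment terms breaks down in the middle of the binomial expansion (the log-concavity-type inequality $\tfrac12(\binom{\ell}{2i+1}+\binom{\ell}{2i-1})\le\binom{\ell}{2i}$ fails there), and the fallback induction on the recursion is only named, not carried out; note also that the recursion's coefficient $z-\gamma-1-\ell$ equals $1-\ell/2<0$ at the target point, and the target point itself moves with $\ell$, so closing such an induction would require a genuinely new invariant that you have not supplied. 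Verifying $\ell=3$ and the case $\gamma=0$ does not fill this gap.

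For comparison, the paper's proof avoids moment expansions entirely. Writing
\begin{equation*}
\ee{W^\gamma(z-W)^\ell}=\int_0^z w^\gamma(z-w)^\ell e^{-w}\,dw-\int_z^\infty w^\gamma(w-z)^\ell e^{-w}\,dw
\end{equation*}
at $z=\gamma+\ell/2+1$, dividing by $z^{\gamma+\ell+1}$ and substituting $x=w/z$ in the first integral and $x=z/w$ in the second folds both pieces onto $(0,1)$ as
\begin{equation*}
\int_0^1(1-x)^\ell\Bigl(x^\gamma e^{-zx}-x^{-(\gamma+\ell+2)}e^{-z/x}\Bigr)dx\,.
\end{equation*}
Pointwise positivity of the integrand is equivalent to $x^{2\gamma+\ell+2}e^{z(1/x-x)}>1$, and since $2\gamma+\ell+2=2z$ this reduces to $f(x)=2\log x+x^{-1}-x>0$ on $(0,1)$, which holds because $f'(x)=-(x^{-1}-1)^2\le 0$ and $f(1)=0$. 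That single elementary inequality replaces all of the bookkeeping you were struggling with; if you want to complete your own write-up, this folding substitution is the missing idea.
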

\begin{proof}
For the first part it suffices to show that for  $z=\gamma+\ell/2+1$ we have that $\ee W^\gamma(z-W)^\ell>0$. Now,
\begin{align}
\ee W^\gamma(z-W)^\ell=\int_0^z w^\gamma (z-w)^\ell e^{-w}dw-\int_z^\infty w^\gamma(w-z)^\ell e^{-w}dw\,.
\end{align}
Dividing the right hand side by $z^{\gamma+\ell+1}$, making the change of variable $x=w/z$ in the first integral and $x=z/w$ in the second, gives
\begin{align}\label{i01}
\int_0^1(1-x)^\ell \left( x^\gamma e^{-zx}-\frac{1}{x^{\gamma+\ell+2}} e^{-z/x}\right)dx\,.
\end{align}
A sufficient condition for~(\ref{i01}) to be strictly positive is that the integrand is stricly positive on $(0,1)$ which is equivalent to
\begin{equation}
x^{2\gamma+\ell+2}e^{z(1/x-x)}>1\ .
\end{equation}
Noting that $(2\gamma+\ell+2)/z=2$ (since $z=\gamma+(\ell/2)+1$) this is equivalent to
\begin{equation}
f(x)=2\log x+x^{-1}-x>0\ .
\end{equation}
Since
\begin{equation}
f'(x)=\frac{2}{x}-\frac{1}{x^2}-1=-\left(\frac{1}{x}-1\right)^2\,,
\end{equation}
it follows that $f$ is strictly decreasing on $(0,1]$ with $f(1)=0$ and thus $f(x)>0$ for all $x\in(0,1]$.
To complete the derivation we observe that
\begin{equation}
\ee W^\gamma(z-W)=z\Gamma(\gamma+1)-\Gamma(\gamma+2)=(z-\gamma-1)\Gamma(\gamma+1)
\end{equation}
so that the root is clearly $z=\gamma+1$.
\end{proof}

The following example demonstrates the applicability of Theorems~\ref{tig} and~\ref{Wonto}. As promised in Remark~\ref{unbdd}, it also implies that for every $n\ge 1$ there are distributions in $\BB$ with unbounded support.

\begin{example}\label{EXP}[The gamma distribution] $W\sim\Cgam ab$ where $a,b>0$. Then $\sigma_n(W)=\frac{a+n-1}{b}$ for every $n\ge 1$. Moreover, $(W/\sigma_n(W))\circ n\dto  X$ where $X$ has a generalized negative binomial distribution with $a$ and $p=1/2$ (see~(\ref{negbin})). 

To see this, we first note that since $bW\sim\Cgam a1$ and $\sigma_n(bW)=b\sigma_n(W)$, it suffices to consider the case $b=1$. We observe that if $W\sim\Cgam a1$ and $X\sim\Cexp 1$ then
\begin{equation}\label{expgamma}
\ee W^k(z-W)^{n-k}=\frac{1}{\Gamma(a)}\ee X^{k+a-1}(z-X)^{n-k}\,.
\end{equation}
If $n\ge 1$ and $k=n-1$ then we have from Lemma~\ref{noneg}, with $\gamma=k+a-1=a+n-2$ and $\ell=n-k=1$, that the root of $\ee W^{n-1}(z-W)$ is $z=\gamma+1=a+n-1$. If $\gamma=k+a-1$ is such that $\ell=n-k$ is odd and greater than or equal to $3$ then from (\ref{expgamma}) and Lemma~\ref{noneg} it follows that the root of $\ee W^k(z-W)^{n-k}$ is strictly smaller than
$\gamma+\ell/2+1=k+a-1+\frac{n-k}{2}+1=a+\frac{k+n}{2}$. Since $n-k\ge 3$, it follows that $k\le n-3$ and thus $a+\frac{k+n}{2}\le a+n-\frac{3}{2}<a+n-1$. Therefore, if $\alpha_k$ are the roots of $\ee W^k(z-W)^{n-k}$ for $k\in K_n$, then all but one are strictly less than $a+n-1$ and one is equal to $a+n-1$. From Theorem~\ref{tig} it therefore follows that $\sigma_n(W)=a+n-1$.

Now, we recall that  $\ee e^{-sW}=G_{N(W)}(s)$ for $s\in [0,1]$. Also, from Example~\ref{negb}, when $W\sim\Cgam{a}{p/(1-p)}$, then $N(W)$ has a generalized negative binomial distribution with $a$ and $p$. Since $p/(1-p)=1$ then necessarily $p=1/2$.

Finally, for every $\epsilon\in (0,1)$ we have that $\ee e^{\epsilon W}=(\frac{1}{1-\epsilon})^a<\infty$ and, clearly, $\sigma_n(W)/n=1+\frac{a-1}{n}\to 1$ as $n\to\infty$. Thus the convergence result follows from Theorem~\ref{Wonto}.
\end{example}

\begin{remark}
Special cases of Example~\ref{EXP} are:
\begin{itemize}
\item $W\sim\Cexp\la$ with $\sigma_n(W)=n/\lambda$.
\item $W\sim\Cerl m\la$ (independent sum of $m$ $\Cexp \la$ distributed random variables) with $\sigma_n(W)=(m+n-1)/\lambda$.
\item $W\sim\chi^2_{(m)}$ (independent sum of squares of $m$ standard normally distributed random variables) with $\sigma_n(W)=m+2(n-1)$.
\end{itemize}
\end{remark}

\bibliographystyle{abbrvnat}
\bibliography{bibliography1}

\begin{thebibliography}{22}
\providecommand{\natexlab}[1]{#1}
\providecommand{\url}[1]{\texttt{#1}}
\expandafter\ifx\csname urlstyle\endcsname\relax
  \providecommand{\doi}[1]{doi: #1}\else
  \providecommand{\doi}{doi: \begingroup \urlstyle{rm}\Url}\fi

\bibitem[Al-Osh and Aly(1992)]{al3}
M.~Al-Osh and E.-E.~A. Aly.
\newblock First order autoregressive time series with negative binomial and
  geometric marginals.
\newblock \emph{Communications in Statistics-Theory and Methods}, 21\penalty0
  (9):\penalty0 2483--2492, 1992.

\bibitem[Bingham et~al.(1989)Bingham, Goldie, and Teugels]{BGT}
N.~H. Bingham, C.~M. Goldie, and J.~Teugels.
\newblock \emph{Regular variation}.
\newblock Number~27. Cambridge University Press, 1989.

\bibitem[Boxma et~al.(2020)Boxma, L{\"o}pker, and Mandjes]{box}
O.~Boxma, A.~L{\"o}pker, and M.~Mandjes.
\newblock On two classes of reflected autoregressive processes.
\newblock \emph{Journal of Applied Probability}, 57\penalty0 (2):\penalty0
  657--678, 2020.

\bibitem[Christoph and Schreiber(2000)]{cs}
G.~Christoph and K.~Schreiber.
\newblock {Scaled Sibuya distribution and discrete self-decomposability}.
\newblock \emph{Statistics \& probability letters}, 48\penalty0 (2):\penalty0
  181--187, 2000.

\bibitem[Cohen(1982)]{Coh}
J.~Cohen.
\newblock \emph{The single server queue, 2nd ed.}
\newblock North-Holland Publ. Cy., 1982.

\bibitem[Davis et~al.(2016)Davis, Holan, Lund, and Ravishanker]{davis}
R.~A. Davis, S.~H. Holan, R.~Lund, and N.~Ravishanker.
\newblock \emph{Handbook of discrete-valued time series}.
\newblock CRC Press, 2016.

\bibitem[Devroye(1993)]{dev1}
L.~Devroye.
\newblock A triptych of discrete distributions related to the stable law.
\newblock \emph{Statistics \& Probability Letters}, 18\penalty0 (5):\penalty0
  349--351, 1993.

\bibitem[Feller(1966)]{feller}
W.~Feller.
\newblock \emph{An introduction to probability theory and its applications},
  volume~2.
\newblock Wiley, 1966.

\bibitem[Grandell(1997{\natexlab{a}})]{gr}
J.~Grandell.
\newblock \emph{{Mixed poisson processes}}, volume~77.
\newblock CRC Press, 1997{\natexlab{a}}.

\bibitem[Grandell(1997{\natexlab{b}})]{gran}
J.~Grandell.
\newblock \emph{Mixed Poisson Processes}.
\newblock CRC Press, 1997{\natexlab{b}}.

\bibitem[Greenwood and Yule(1920)]{gj}
M.~Greenwood and G.~U. Yule.
\newblock An inquiry into the nature of frequency distributions representative
  of multiple happenings with particular reference to the occurrence of
  multiple attacks of disease or of repeated accidents.
\newblock \emph{Journal of the Royal Statistical Society}, 83\penalty0
  (2):\penalty0 255--279, 1920.

\bibitem[Johnson et~al.(2005)Johnson, Kemp, and Kotz]{kotz}
N.~L. Johnson, A.~W. Kemp, and S.~Kotz.
\newblock \emph{Univariate discrete distributions}, volume 444.
\newblock John Wiley \& Sons, 2005.

\bibitem[J{\o}rgensen and Kokonendji(2016)]{jk}
B.~J{\o}rgensen and C.~C. Kokonendji.
\newblock {Discrete dispersion models and their Tweedie asymptotics}.
\newblock \emph{AStA Advances in Statistical Analysis}, 100\penalty0
  (1):\penalty0 43--78, 2016.

\bibitem[Kang et~al.(2020)Kang, Wang, and Yang]{kang}
Y.~Kang, D.~Wang, and K.~Yang.
\newblock {Extended binomial AR (1) processes with generalized binomial
  thinning operator}.
\newblock \emph{Communications in Statistics-Theory and Methods}, 49\penalty0
  (14):\penalty0 3498--3520, 2020.

\bibitem[Kozubowski and Podg\'{o}rski(2018)]{KP17}
T.~J. Kozubowski and K.~Podg\'{o}rski.
\newblock A generalized {S}ibuya distribution.
\newblock \emph{Annals of the Institute of Statistical Mathematics},
  70:\penalty0 855--887, 2018.

\bibitem[Liu and Zhu(2020)]{what}
Z.~Liu and F.~Zhu.
\newblock A new extension of thinning-based integer-valued autoregressive
  models for count data.
\newblock \emph{Entropy}, 23\penalty0 (1):\penalty0 62, 2020.

\bibitem[McKenzie(1986)]{mc}
E.~McKenzie.
\newblock Autoregressive moving-average processes with negative-binomial and
  geometric marginal distributions.
\newblock \emph{Advances in Applied Probability}, 18:\penalty0 679--705, 1986.

\bibitem[Mecke(1968)]{mecke}
J.~Mecke.
\newblock {Eine charakteristische Eigenschaft der doppelt stochastischen
  Poissonschen Prozesse}.
\newblock \emph{Zeitschrift f{\"u}r Wahrscheinlichkeitstheorie und Verwandte
  Gebiete}, 11\penalty0 (1):\penalty0 74--81, 1968.

\bibitem[{Steutel} and {van Harn}(1979)]{steutel}
F.~{Steutel} and K.~{van Harn}.
\newblock {Discrete analogues of self-decomposability and stability.}
\newblock \emph{{Ann. Probab.}}, 7:\penalty0 893--899, 1979.
\newblock ISSN 0091-1798.

\bibitem[Wei{\ss}(2008)]{chw1}
C.~H. Wei{\ss}.
\newblock Thinning operations for modeling time series of counts -- a survey.
\newblock \emph{AStA Advances in Statistical Analysis}, 92\penalty0
  (3):\penalty0 319--341, 2008.

\bibitem[Widder(1946)]{widder}
D.~V. Widder.
\newblock \emph{{The Laplace transform}}.
\newblock Princeton Math. Series, 1946.

\bibitem[Wiuf and Stumpf(2006)]{wiuf}
C.~Wiuf and M.~P. Stumpf.
\newblock Binomial subsampling.
\newblock \emph{Proceedings of the Royal Society A: Mathematical, Physical and
  Engineering Sciences}, 462\penalty0 (2068):\penalty0 1181--1195, 2006.

\end{thebibliography}

\end{document}